\documentclass[11pt,a4paper]{article}
\usepackage{amsmath,amssymb,amsthm,graphicx,color,bbm,array}
\usepackage{soul,csquotes}
\usepackage[left=1in,right=1in,top=1in,bottom=1in]{geometry}
\usepackage[british]{babel}
\usepackage{hyperref} 
\usepackage{enumitem}
\usepackage{tikz}
\usepackage{xcolor}
\usetikzlibrary{decorations.markings}
\hypersetup{
    colorlinks=false,
    pdfborder={0 0 0},
}

\makeatletter 
\@mparswitchfalse%
\makeatother
\reversemarginpar
\usepackage[colorinlistoftodos]{todonotes}

\usepackage[style=ext-numeric-comp,backend=bibtex,giveninits=true,isbn=false,url=false,doi=false,articlein=false,maxbibnames=99]{biblatex}
\DeclareFieldFormat[article]{title}{#1} 
\DeclareFieldFormat[phdthesis]{title}{#1} 
 \AtEveryBibitem{%
  \clearfield{number}}

\newtheorem{theorem}{Theorem}[section]
\newtheorem{corollary}[theorem]{Corollary}
\newtheorem{proposition}[theorem]{Proposition}
\newtheorem{lemma}[theorem]{Lemma}

\numberwithin{equation}{section}

\theoremstyle{definition}
\newtheorem{definition}[theorem]{Definition}

\newtheorem{examplex}[theorem]{Example}

\newenvironment{example}
  {\pushQED{\qed}\examplex}
  {\popQED\endexamplex}

\theoremstyle{remark}
\newtheorem{remark}[theorem]{Remark}
\newtheorem{remarks}[theorem]{Remarks}
\newtheorem*{remark*}{Remark}

 \allowdisplaybreaks

\newcommand{\1}[1]{{\mathbbm{1}\mkern -1.5mu}{\{#1\}}}

\newcommand{\2}[1]{{\mathbbm{1}}_{#1}}

\newcommand{\R}{{\mathbb R}}
\newcommand{\eps}{{\varepsilon}}
\newcommand{\Z}{{\mathbb Z}}
\newcommand{\N}{{\mathbb N}}
\newcommand{\ZP}{{\mathbb Z}_+}
\newcommand{\RP}{{\mathbb R}_+}

\DeclareMathOperator{\Exp}{\mathbb{E}}
\let\Pr\relax
\DeclareMathOperator{\Pr}{\mathbb{P}}

\DeclareMathOperator{\IP}{{\mathbb P}}
\DeclareMathOperator{\IE}{{\mathbb E}}

\DeclareMathOperator{\Var}{\mathbb{V}ar}

\DeclareMathOperator*{\argmin}{arg \mkern 1mu \min}
\DeclareMathOperator*{\argmax}{arg \mkern 1mu \max}

\DeclareMathOperator{\card}{\#}

\newcommand{\re}{{\mathrm{e}}}
\newcommand{\rc}{{\mathrm{c}}}
\newcommand{\ud}{{\mathrm d}}

\newcommand{\cC}{{\mathcal C}}
\newcommand{\cD}{{\mathcal D}}

\newcommand{\cF}{{\mathcal F}}
\newcommand{\cG}{{\mathcal G}}

\newcommand{\cL}{{\mathcal L}}

\newcommand{\as}{\ \text{a.s.}}

\newcommand{\Bigmid}{\; \Bigl| \;}

\newcommand{\bbX}{{\mathbb{X}}}

\newcommand{\bbXB}{{\mathbb{X}_\mathrm{F}}}

\renewcommand{\phi}{\varphi}

\newcommand{\eqd}{\overset{d}{=}}

\makeatletter
\def\namedlabel#1#2{\begingroup  
    (#2)%
    \def\@currentlabel{#2}%
    \phantomsection\label{#1}\endgroup
}
\makeatother

\newlist{myenumi}{enumerate}{10}
\setlist[myenumi]{leftmargin=0pt, labelindent=\parindent, listparindent=\parindent, labelwidth=0pt, itemindent=!, itemsep=1pt, parsep=4pt}

\newlist{thmenumi}{enumerate}{10}
\setlist[thmenumi]{leftmargin=0pt, labelindent=\parindent, listparindent=\parindent, labelwidth=0pt, itemindent=!}

\newcommand{\hv}{{\hat v}}
\newcommand{\tv}{{\tilde v}}

\title{Dynamics for  exclusion processes with random environments\\ of particle jump rates}

 \author{Mikhail Menshikov\footnote{\raggedright Department of Mathematical Sciences, Durham University, Durham, UK. \href{mailto:mikhail.menshikov@durham.ac.uk}{\texttt{mikhail.menshikov@durham.ac.uk}}, \href{mailto:andrew.wade@durham.ac.uk}{\texttt{andrew.wade@durham.ac.uk}}.} \and Serguei Popov\footnote{Centro de Matem\'atica, University of Porto, Porto, Portugal. \href{mailto:serguei.popov@fc.up.pt}{\texttt{serguei.popov@fc.up.pt}}.} \and Andrew Wade\footnotemark[1]}

\date{\today}

\begin{document}
\maketitle 

\begin{abstract}
We consider a continuous-time particle system 
with exclusion interaction on the integer lattice.
Each particle is assigned intrinsic left and right jump rates by an independent draw from a common random environment.
We identify the decomposition of the system into maximal
stable subsystems, which we call clouds.
We show different qualitative behaviour for the cloud
decomposition corresponding to different regimes for the
random environment law that parallel 
the classical Sinai and Kesten--Kozlov--Spitzer regimes from one-dimensional
random walk in random environment.
Our main result in the Sinai regime is that the system 
is decomposed into an infinite number of finite 
clouds which all go to minus infinity with rapidly
decreasing speed magnitudes, and we obtain explicit associated distributional limits. 
We analyse the model through the potential function associated with the random environment. In the Sinai regime, deep potential wells trap particles, producing stable clouds whose speeds are exponentially small in the well depth. Since successive wells are    deeper, the resulting clouds are successively slower. Thus the localization mechanism of Sinai's random walk manifests itself here as a jamming phenomenon, in which a potential well traps not one walker but an entire block of mutually excluding particles.  
\end{abstract}

\medskip

\noindent
{\em Key words:}  
Exclusion process, interacting particle systems, 
random environment, 
Sinai's regime,
stable clouds, infinite Jackson network, asymptotic speeds, potential function.

\medskip

\noindent
{\em AMS Subject Classification:} 60K35 (Primary), 60K37, 60K25, 60J25 (Secondary).

\bigskip
\bigskip

\textsl{This paper is dedicated to the memory of Mikhail Menshikov, who passed away on May~1, 2026. We (S.P.\ and A.W.) wish to acknowledge that Mikhail was the driving force behind the research programme to which this work belongs (see \cite{MMPW23,MPW25,MPW25_elastic,MPW26_leftmost}), and that the original idea for this line of investigation was his. We are deeply grateful for his vision, insight, and guidance throughout the project. Beyond his scientific contributions, we will profoundly miss him as a dear friend and a lifelong mentor.}

\section{Introduction and results}
\label{s_intro_res}

\subsection{Formulation of the model}
\label{sec:model}

We study a semi-infinite exclusion process in which each particle has its own randomly chosen left and right jump rates. The interaction causes particles with different intrinsic drifts to organize into maximal stable groups, or clouds, whose internal separations remain tight while distinct clouds separate. Our aim is to describe the typical cloud structure generated by a random environment of particle rates.

Our main results describe the cloud partitions for natural classes of random environments, which parallel  the phase diagram for  environments in \emph{random walk in random environment} (RWRE) on~$\Z$~\cite{solomon,Zeitouni2004,revesz,KKS}.
For example, in the analogue of Sinai's regime~\cite{sinai82}, we show that, for almost all environments, the system decomposes into an infinite collection of finite subsystems whose size and intrinsic speed can be described asymptotically via a \emph{potential} function, indexed by particle labels, associated to the random environment. We show that the localization phenomenon for random walk in random environment has an analogous \emph{jamming} phenomenon in the particle system context.

The system is described as follows. Particles are enumerated  by $\N := \{1,2,3,\ldots\}$ from the left to the right (particle $1$ is the leftmost particle, and there is no rightmost particle). 
The $k$th particle has intrinsic jump rates $a_k, b_k \in (0,\infty)$, given by the realization of the random environment, 
where $a_k$ is the rate at which the particle attempts to jump one site to the left, and $b_k$ the rate at which it attempts to jump one site to the right. 
The exclusion interaction means that jumps that target a site occupied by another particle are suppressed; consequently, the order of the particles is preserved. 
See Figure~\ref{fig:particles} for a picture.

\begin{figure}[h]
\centering
\scalebox{0.85}{
 \begin{tikzpicture}[domain=0:1, scale=1.0]
\draw[dotted,<->] (0,0) -- (13,0);
\node at (13.4,0)       {$\Z$};
\draw[black,fill=white] (1,0) circle (.5ex);
\draw[black,fill=black] (2,0) circle (.5ex);
\node at (2,-0.6)       {\small $X_1 (t)$};
\draw[black,fill=white] (3,0) circle (.5ex);
\draw[black,fill=black] (4,0) circle (.5ex);
\node at (4,-0.6)       {\small $X_2 (t)$};
\draw[black,fill=black] (5,0) circle (.5ex);
\node at (5,-0.6)       {\small $X_3 (t)$};
\draw[black,fill=white] (6,0) circle (.5ex);
\draw[black,fill=white] (7,0) circle (.5ex);
\draw[black,fill=white] (8,0) circle (.5ex);
\draw[black,fill=black] (9,0) circle (.5ex);
\node at (9,-0.6)       {\small $X_4 (t)$};
\draw[black,fill=black] (10,0) circle (.5ex);
\node at (10,-0.6)       {\small $X_5 (t)$};
\draw[black,fill=black] (11,0) circle (.5ex);
\node at (11,-0.6)       {\small $~~~~~~ X_6 (t) ~ \cdots$};
\draw[black,fill=white] (12,0) circle (.5ex);
\node at (1.3,0.6)       {\small rate $a_1$};
\node at (2.5,0.6)       {\small $b_1$};
\draw[black,->,>=stealth] (2,0) arc (30:141:0.58);
\draw[black,->,>=stealth] (4,0) arc (30:141:0.58);
\node at (3.5,0.6)       {\small $a_2$};
\node at (5.5,0.6)       {\small $b_3$};
\draw[black,->,>=stealth] (9,0) arc (30:141:0.58);
\node at (8.5,0.6)       {\small $a_4$};
\node at (11.5,0.6)       {\small $b_6$};
\draw[black,->,>=stealth] (2,0) arc (150:39:0.58);
\draw[black,->,>=stealth] (5,0) arc (150:39:0.58);
\draw[black,->,>=stealth] (11,0) arc (150:39:0.58);
\end{tikzpicture}}
\caption{Schematic of the model showing the leftmost $6$ particles, illustrating some of the main notation. Filled circles represent particles, empty circles represent unoccupied lattice sites, and
directed arcs represent admissible transitions, with exponential rates indicated.}
\label{fig:particles}
\end{figure}

The configuration space of the system is $\bbX$, given by
\begin{equation}
\label{eq:configuration-space}
\bbX  := \big\{ (x_1, x_2, \ldots ) \in \Z^{\N} : x_1 < x_2 < \cdots  \big\} . 
\end{equation}
Let $X (t) = (X_1 (t), X_2(t), \ldots  ) \in \bbX$ be the configuration of the  process of particle positions 
at time $t \in \RP := [0,\infty)$,
started from a fixed initial configuration $X(0) \in \bbX$.

For deterministic jump rates, cloud decompositions were studied for finite systems in~\cite{MMPW23}, while~\cite{MPW25,MPW26_leftmost} investigated stability and dynamics of semi-infinite systems forming a single cloud. The goals and methods of the present paper are different. Here we study the cloud structure generated by an i.i.d.~environment of particle rates, combining potential methods from one-dimensional RWRE with coupling arguments and the finite-system theory of~\cite{MMPW23}.

We use the notation
$\omega:=\big((a_k,b_k), \, k \in \N\big)$ 
 for the random environment,
 which we assume lives on a probability space with probability measure $\IP$ and expectation $\IE$. We suppose that, under $\IP$, $(a_i, b_i)_{i\geq 1}$ are   independently 
sampled as random copies of a vector 
$(\Theta^-,\Theta^+)\in (0,\infty)^2$.
The particle system 
in random environment is then, for fixed $\omega$, defined by taking $(a_k,b_k)$
to be Markov transition rates of the $k$th particle, as described above.
When the initial configuration
is~$x\in\bbX$, we use the notation~$\IP^x_\omega$ for 
the \emph{quenched} (i.e., with fixed
environment~$\omega$)
law of $ (X(t), \, t \geq 0)$ started from $X(0)=x$, 
and we denote by $\IE^x_\omega$ the corresponding expectation. 

The central object in our analysis is the random \emph{potential} obtained by summing the log-ratios of the particle jump rates, which we define
formally in Section~\ref{sec:potentials} below. Roughly speaking, stable clouds correspond to \emph{traps} or \emph{wells} of this potential: a sufficiently deep well produces a group of particles which moves together, with a speed exponentially small in the depth of the well. Thus a major part of the narrative of this paper is explaining how the large-scale cloud structure is governed by the successive record wells of a one-dimensional random walk (in the limit, Brownian motion). 

We need to assume some regularity conditions on the random environment. The first ``nestling'' condition (the terminology comes from the RWRE literature; see~\cite{zerner,Zeitouni2004}) ensures that the particles may have drifts in both
directions; in the absence of such an assumption we would not get the possibility of deep potential wells.

\begin{description}
\item
[\namedlabel{ass:nestling}{{\bf N}}]  
\emph{Nestling
random environment.} 
It holds that $\IP[\Theta^-<\Theta^+] >0$ 
and $\IP[\Theta^->\Theta^+] >0$.
\end{description}

We assume a \emph{uniform ellipticity}
condition, i.e., that the rates are uniformly
bounded away from zero and infinity (this is standard in RWRE, for example).

\begin{description}
\item
[\namedlabel{ass:ue}{{\bf UE}}] 
\emph{Uniform ellipticity.} 
For some $\eps_0 \in (0,1)$ we have 
$\IP[\eps_0\leq\Theta^\pm\leq \eps_0^{-1}]=1$.
\end{description}

Observe that Condition~\eqref{ass:ue} implies that
\begin{equation}
\label{unif_ellipticity}
 \text{for $\kappa:=2 \, | \log \eps_0 | < \infty$ 
 we have $\IP$-a.s.\ that }
 -\kappa \leq \log ( \Theta^- / \Theta^+ ) \leq \kappa.
\end{equation}

For technical reasons we also need a
  mild regularity assumption on ascending ladder heights
  of the random walk
with independent increments distributed 
as $\log ( \Theta^- / \Theta^+ )$.
If we denote such random walk by~$V=(V_n, n\geq 0)$ and 
assume~$V_0=0$, then we can
denote $\tau_+:=\min\{k: V_k>0\}$
and let~$\eta:=V_{\tau_+}>0$ be the first
ascending ladder height. The next condition 
assures that the first ladder height is unlikely 
to be exceptionally small.

\begin{description}
\item
[\namedlabel{ass:lh}{{\bf LH}}] 
\emph{Ladder height.} 
Suppose
there exist positive constants $\gamma$ and~$\delta_0$ 
such that for all small enough $y>0$ it holds that 
$\IP[\eta \geq y]\geq 1 - \gamma y^{\delta_0}$.
\end{description}

Condition~\eqref{ass:lh}  holds, in particular, for lattice environments and under natural bounded-density hypotheses; see Lemma~\ref{lem:lh}. It need not follow from~\eqref{ass:ue} and~\eqref{ass:nestling} alone.
Here is an example for which all of the above conditions are satisfied.

\begin{example}
\label{ex:uniform}
    Let $\eps \in (0,1)$, and take $\Theta^- , \Theta^+$ to be independent $\text{U}[\eps,\eps^{-1}]$ random variables. Then hypotheses~\eqref{ass:nestling}, \eqref{ass:ue}, and~\eqref{ass:lh} 
    are satisfied (for the latter, apply Lemma~\ref{lem:lh}). 
\end{example}

The sign of $\Exp \log (\Theta^-/\Theta^+)$,
which is finite under hypothesis~\eqref{ass:ue}, 
informs a phase diagram resembling that for one-dimensional RWRE,
and our results cover all three regimes.
We give our results for the case $\Exp \log (\Theta^-/\Theta^+) = 0$ 
in \S\ref{s_results_Sinai} and 
for the cases $\Exp \log (\Theta^-/\Theta^+) \neq 0$
in \S\ref{sec:KKS-results}. 
In the zero-mean case, we show that the system consists of infinitely many finite clouds with doubly exponentially decreasing speed magnitudes. When the mean is negative, all but finitely many particles form a single semi-infinite stable cloud. When the mean is positive, there are again infinitely many finite clouds, but magnitudes of particle speeds decrease polynomially.
Before formally stating our results, we
formally define the concept of \emph{stable clouds} that we have so far discussed informally.

\subsection{Stable clouds}
\label{sec:stable-clouds}

Informally, a stable cloud is a maximal consecutive group of particles in which pairwise particle separations remain tight while its distance from particles outside the cloud diverges. 
Note that, typically, the decomposition into stable 
clouds will depend on the environment~$\omega$, and, in principle,
can depend also on the initial configuration, as explained in Section~\ref{sec:clouds-infinite} below.
We use notation $[i,j] := \{ \ell \in \ZP : i \leq \ell \leq j\}$ ($i \leq j$) for a discrete interval, and write $[n,\infty) := \{ \ell \in \ZP : \ell \geq n\}$ (throughout $\ZP := \N \cup \{0\} = \{0,1,2,\ldots\}$). 

The next definition says that a finite group of consecutive particles is a stable cloud if the distance between the outermost
particles in the group stays tight, and the distances of each outermost particle with its neighbour outside the group tends to $\infty$ in probability.

\begin{definition}[Stable cloud, finite case]
\label{def:stable-cloud-finite}
For $i \leq j$ the group $[i,j]$ 
is an $(\omega,x)$-stable cloud,
if (i) for every $\eps>0$ there exists $B \in \RP$
such that
\[ \Pr_\omega^x ( X_j (t) - X_i (t) \geq B ) \leq \eps , \text{ for all } t \in \RP ,\]
and (ii) for every $B \in \RP$ and every $\eps>0$ there exists $t_0 \in \RP$ such that
$\Pr_\omega^x ( X_{j+1}(t) - X_j(t) \geq B ) \geq 1-\eps$ for all $t \geq t_0$,
and (provided $i >1$)  
$\Pr_\omega^x ( X_{i}(t) - X_{i-1}(t) \geq B ) \geq 1-\eps$ for all $t \geq t_0$.
\end{definition}

    The semi-infinite case is similar.

\begin{definition}[Stable cloud, semi-infinite case]
\label{def:stable-cloud-infinite}
The group $[i,\infty)$ 
is an $(\omega,x)$-stable cloud,
if (i) for every $j, k \in [i,\infty)$, for every $\eps>0$ there exists $B \in \RP$
such that
\[ \Pr_\omega^x ( | X_j (t) - X_k (t)| \geq B ) \leq \eps , \text{ for all } t \in \RP ,\]
and (ii) (provided $i >1$)  
for every $B \in \RP$ and every $\eps>0$ there exists $t_0 \in \RP$ such that
$\Pr_\omega^x ( X_{i}(t) - X_{i-1}(t) \geq B ) \geq 1-\eps$ for all $t \geq t_0$.
\end{definition}

\subsection{Results: Sinai regime}
\label{s_results_Sinai}

Suppose that
\begin{equation}
\label{Sinai's_regime}
 \IE \log\frac{\Theta^-}{\Theta^+} = 0.
\end{equation}
By analogy with the RWRE case~\cite{sinai82}, we say this is \emph{Sinai's regime} for the random environment. Note that~\eqref{Sinai's_regime} holds whenever $\Theta^-$ and $\Theta^+$ have the same distribution, as they do in Example~\ref{ex:uniform} above.
Let $\sigma^2 := \Var \log ( \Theta^-/\Theta^+)$; note $\sigma^2 \in (0,\infty)$ under~\eqref{ass:nestling} and~\eqref{ass:ue}.

Our main result for Sinai's regime is the following.
Informally, in the Sinai regime the particles organize themselves into successively larger finite clouds, which, up to a finite initial section, we can identify explicitly with potential wells, and whose speeds decrease doubly exponentially with the cloud index. The ratios of successive logarithmic speeds have an asymptotic Pareto law, while cloud sizes and   squared logarithmic speeds satisfy a distributional limit result.
We write ``a.a.'' for ``almost all''.

\begin{theorem}
\label{thm:RE_Sinai}
 Assume Conditions~\eqref{ass:nestling}, \eqref{ass:ue}, and~\eqref{ass:lh}, and suppose 
that~\eqref{Sinai's_regime} holds. Let $X(0) = x \in \bbX$.  Then 
for $\IP$-almost all environments the system
is decomposed into infinitely many finite stable
clouds,
i.e., there exist pairwise disjoint, non-empty $\cC_1$, $\cC_2$, \ldots such that (i) for every $k \in \N$ and all $i \in \cC_k$ and $j \in \cC_{k+1}$, $i <j$, and (ii) $\cC_1 \cup \cC_2 \cup \cdots = \N$,
such that each $\cC_k$ is stable in the sense of Definition~\ref{def:stable-cloud-finite}.
Moreover, the cloud decomposition is independent of the initial configuration, and, for a.a.~$\omega$
there exist $k_1$ and $k_2$ such that sequences $( \cC_k, k \geq k_1)$ and $([h_k+1,h_{k+1}], k \geq k_2)$ coincide, where $h_k$ are defined in terms of the potential in \S\ref{s_infinite_Sinai} below.

To every cloud $k \in \N$ there is an associated \emph{speed}
$s_k=s_k(\omega)$ 
for which the following hold.
\begin{thmenumi}[label=(\roman*)]
\item\label{thm:RE_Sinai-i}
For every $k \in \N$ and every $j \in \cC_k$,
for   $\IP$-a.a.~$\omega$ 
we have the strong law
\begin{equation}
    \label{eq:slln}
    \lim_{t \to \infty} t^{-1} X_j (t) = s_k, ~\text{$\IP^x_\omega$-a.s.}
\end{equation}
 \item\label{thm:RE_Sinai-ii}
 For $\IP$-a.a.\ $\omega$, 
it holds that, as $k\to \infty$, 
\begin{equation}
\label{eq_speeds_asymp}
 s_k = -\exp\big(-\re^{k(1+o(1))}\big).
\end{equation}
\item    \label{thm:RE_Sinai-iii}
We have, for all $y \geq 1$,
\begin{equation}
\label{eq_relative_speeds}
\lim_{k \to \infty} \IP [ \xi_k
 >y  ] = \frac{1}{y}, \text{ where } \xi_k := \frac{\log|s_{k+1}|}{\log|s_k|}.
\end{equation}
Also, the random variables $\xi_k$
are asymptotically independent in the sense that, for each $m \in \ZP$,  
$(\xi_{n}, \xi_{n+1}, \ldots, \xi_{n+m} ) $ converges in distribution, as $n \to \infty$, to
 $( \theta_0, \theta_1, \ldots, \theta_m )$
 a vector of i.i.d.~variables with $\Pr [ \theta_0 > y ]=1/y$ for $y \geq 1$.
 \item\label{thm:RE_Sinai-iv}
 Let $S_k=S_k(\omega)$ denote the size of cloud $k \in \N$ (i.e., the number of particles it contains). 
 There is 
 a non-negative
random variable~$Y$,
the first hitting time of~$2$ by a 3-dimensional Bessel process started at $0$,
with
\begin{equation}
    \label{eq:Y-distribution}
    \Pr ( Y \leq y ) = 1 - 2 \sum_{m=1}^\infty (-1)^{m+1} \exp \left\{ - \frac{\pi^2 m^2 y}{8} \right\} ,
\end{equation} 
such that (where  $\Rightarrow$  denotes convergence in distribution)
\begin{equation}
\label{eq_sizes_depths}
  \frac{S_k}{\log^2|s_k|^{-1}}
\Rightarrow \frac{Y}{\sigma^2} , \text{ as }k\to\infty.
\end{equation}
\end{thmenumi}
\end{theorem}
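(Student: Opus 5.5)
The plan is to reduce everything to the finite-system cloud theory of~\cite{MMPW23}, applied to blocks cut out by the potential, and then read off the asymptotics from classical facts about record wells of a centred random walk.

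\textbf{Potential and cutting.} Set $V_n := \sum_{i=1}^n \log(a_i/b_i)$. In the finite theory, a group $[i,j]$ moving as a stable cloud has, in isolation, a product-form (Jackson-type) invariant law for the gaps. The mass of a gap configuration is governed by ratios of the form $\prod a_\ell/b_\ell$, so the cloud speed is approximately $-\exp(-H)$, where $H$ is the depth of the corresponding well of $V$ on $[i,j]$. The stability criterion is that every suffix of the block has larger average intrinsic drift than every prefix. In potential language this means $[i,j]$ is a well whose bottom lies strictly below its boundary values.

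I would define $h_k$ as the successive record (ladder) indices of $V$ built from descending-then-ascending excursions: $[h_k+1,h_{k+1}]$ is the $k$th well whose depth exceeds all previous depths. In Sinai's regime these depths $D_k$ grow, and Condition~\eqref{ass:lh} is used to guarantee that consecutive records are separated by a gap bounded below. That separation gives a strict speed ordering $|s_{k+1}| \ll |s_k|$.

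\textbf{Decomposition and the strong law.} Next I would show that the blocks are genuine stable clouds in the infinite system. I would use monotone coupling (attractiveness of exclusion with ordered particles) to compare the infinite system with finite truncations. Particles to the right of $h_{k+1}$ form a slower, deeper well, so they drift left more slowly. Particles to the left of $h_k+1$ move left faster. Hence each block separates from both neighbours linearly in time, and inside the block the finite-system ergodic result gives tightness and the SLLN~\eqref{eq:slln}, with $s_k$ given by the finite-cloud speed formula.

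The initial finite segment, before the records become clean and well separated, is handled directly by the finite theory. There may be finitely many irregular clouds, which is why the statement uses $k_1,k_2$. Independence from $x$ follows because the speed formula and the separation depend only on $\omega$.

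\textbf{Asymptotics.} From $\log|s_k| = -D_k(1+o(1))$, I would use Donsker's theorem, rescaling $V$ by $\sigma$, and scaling of Brownian motion. The successive record drawdown depths of Brownian motion satisfy $D_{k+1}/D_k$ i.i.d.\ Pareto(1) in the limit; this is the standard fact that $\log D_k$ behaves like a random walk with Exp(1) increments. This yields~\eqref{eq_relative_speeds}, together with asymptotic independence. The strong law $\log D_k \sim k$ then gives~\eqref{eq_speeds_asymp}.

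For~\eqref{eq_sizes_depths}, the length of the well achieving a record depth $D$ is, after scaling by $D^2/\sigma^2$, the time for Brownian motion to achieve a drawdown of size $1$ relative to its running minimum (a rise from the bottom). Williams' path decomposition identifies this as the hitting time of level~$2$ by a $\mathrm{BES}(3)$ started at $0$, with Laplace transform $\sqrt{2\lambda}/\sinh$, which gives~\eqref{eq:Y-distribution}.

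\textbf{Main obstacle.} The hardest step is the decomposition: proving in the infinite system that the well blocks neither merge nor split. This requires a quantitative estimate that the interior gaps stay tight despite neighbours pushing, and that the speed gap beats fluctuations. I would try a sandwich coupling between two finite systems that freeze or remove the neighbouring blocks, combined with the exponential separation of speeds.
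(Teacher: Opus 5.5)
Your skeleton (record wells of $V$, the finite-system theory, couplings, Donsker) is the paper's, but both structural steps are missing their key ideas.

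\textbf{Stability of the blocks.} The blocks $[h_k+1,h_{k+1}]$ are not intrinsically stable merely because they are wells. Stability is the condition $\rho_k<1$ on the traffic intensities, not an ordering of average drifts. For example, the triple with rates $(a_i,b_i)=(1,2),(100,150),(3,1)$ has $V=(0,-\log 2,-\log 3,0)$ and is stable with zero speed. Yet neither ``every prefix faster than every suffix'' nor the reverse holds for its average drifts.

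More seriously, between the return time $m_{k+1}$ and $h_{k+1}$ the potential can contain sub-wells of depth arbitrarily close to $d_{k+1}$. Such a sub-well could split the right end of the block off as a separate cloud. The paper excludes this only eventually almost surely, in two steps:
\begin{itemize}
\item it proves the depth--speed bounds $|\tilde v|\le\eps_0^{-1}\re^{-\cD}$ and $|\hat v|\ge\eps_0 n^{-1}(1-\re^{-(V(n)-V(0))})\re^{-\cD}$;
\item via Borel--Cantelli, it shows that eventually $\cD(m_k,h_k)\le(1-k^{-2})d_k$ (a separate optional-stopping estimate for the second-deepest trap), $H_k\le\re^{3k}$, $\re^{k/2}\le d_k\le\re^{2k}$, and $1-\re^{-\Delta_k}\ge\re^{-5k/\delta_0}$.
\end{itemize}
Together these make $\re^{k^{-2}d_k}$ beat every correction factor.

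That last bound is the actual role of (LH). It controls the factor $1-\re^{-(V(h_k)-V(i-1))}$ in the lower speed bound for a cloud $[i,h_k]$ whose left end may sit just below the record level. It gives only $\Delta_k\ge\re^{-4k/\delta_0}$, not a gap bounded below, and it does not produce the speed ordering. That ordering comes from $d_{k+1}>(1+k^{-2})d_k$, which is used to compare the free speed $u^{(k)}$ of block $k$ with the infinite-pressure speed $\tilde u^{(k+1)}$ of block $k+1$.

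\textbf{Decomposition of the infinite system.} A position sandwich (removing the right neighbours versus imposing infinite pressure) bounds $X_j(t)-X_i(t)$ only by a difference of positions in two different systems. This controls speeds, so it gives the strong law and the separation once the clouds are known. It does not give the tightness in Definition~\ref{def:stable-cloud-finite}(i). Your heuristic that neighbouring blocks are slower or faster is also circular, since their speeds in the infinite system exist only after the decomposition is proved.

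The missing idea is the second-class-customer coupling of the Jackson network (Lemma~\ref{lem:couple_infinite_pressure}):
\begin{itemize}
\item truncate with infinite pressure;
\item use $|u^{(k)}|>|\tilde u^{(k+1)}|$, and the analogous comparison for the rightmost cloud of the irregular segment $[1,h_{k_0}]$, to get a transient separating queue;
\item show that after a random time no extra customers cross it, while those already inside leave or are trapped.
\end{itemize}
Then the internal queues to its left coincide with the finite ones, and only at that point do the finite-system results give tightness.

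\textbf{Part (iv).} Your identification is wrong. The time for $B$ to rise by $1$ above its running minimum is the hitting time of $1$ by reflected Brownian motion. It has Laplace transform $1/\cosh\sqrt{2\lambda}$ and mean $1$, whereas the $\mathrm{BES}(3)$ hitting time of $2$ has mean $4/3$. The cloud spans two pieces:
\begin{itemize}
\item the whole excursion below $V(h_k)$ containing the well, which by Williams' decomposition is two independent $\mathrm{BES}(3)$ legs, with transform $2\lambda/\sinh^2\sqrt{2\lambda}$;
\item the wait from $m_{k+1}$ to $h_{k+1}$, which is the left endpoint of the first reflected excursion of height $1$, with transform $\tanh\sqrt{2\lambda}/\sqrt{2\lambda}$.
\end{itemize}
Only the product of these equals $2\sqrt{2\lambda}/\sinh(2\sqrt{2\lambda})$.

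\textbf{Growth of depths.} Donsker alone cannot give the almost-sure growth $\log d_k\sim k$. That needs the exact conditional Pareto bounds from optional stopping, iterated exponential moments, and Borel--Cantelli.
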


While Theorem~\ref{thm:RE_Sinai} describes the speeds and sizes of stable clouds, the next result
describes the speed of a given particle.
The two results are related, since determining the particle's speed essentially amounts to determining which cloud it belongs to.

\begin{theorem}
\label{t_Sinai_particle}
  Assume 
  Conditions~\eqref{ass:nestling}, \eqref{ass:ue}, and~\eqref{ass:lh}, suppose 
that~\eqref{Sinai's_regime} holds. Let $X(0) = x \in \bbX$. 
To every particle $n \in \N$ there is an associated {speed}
$v_n =v_n(\omega) < 0$ 
such that, for   $\IP$-a.a.~$\omega$,
\begin{equation}
    \label{eq:slln-by-particle}
    \lim_{t \to \infty} t^{-1} X_n (t) = v_n, ~\text{$\IP^x_\omega$-a.s.}
\end{equation}
Moreover, there is a non-negative
random variable~$Z$,
given in terms of standard Brownian motion at~\eqref{df_Y_Sinai} below, 
with
\begin{equation}
    \label{eq:Z-distribution}
    \Pr ( Z \leq z ) = \frac{8}{\pi^2} \sum_{m=0}^\infty (2m+1)^{-2}
    \exp \left\{ - \frac{\pi^2 (2m+1)^2}{8z^2} \right\} ,
\end{equation} 
such that the speed of the $n$th particle satisfies
\begin{equation}
\label{eq_Sinai_particle}
   n^{-1/2} \log | v_n|^{-1} \Rightarrow \sigma Z, 
\text{ as }n\to\infty.
\end{equation}
\end{theorem}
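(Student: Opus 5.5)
The plan is to deduce Theorem~\ref{t_Sinai_particle} from Theorem~\ref{thm:RE_Sinai} together with the potential description of the clouds in \S\ref{s_infinite_Sinai}.

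\textbf{Step 1: existence of $v_n$.} Since $\cC_1, \cC_2, \ldots$ partition $\N$ into finite consecutive blocks, each particle $n$ lies in a unique cloud $\cC_{k(n)}$. We set $v_n := s_{k(n)}$. Then \eqref{eq:slln-by-particle} is exactly \eqref{eq:slln}. Negativity of $v_n$ must be checked cloud by cloud, including the finitely many initial clouds that are not matched with the $h_k$. For this I would use the finite-system speed formula of~\cite{MMPW23}. For a stable block $[i,j]$, the cloud behaves as a stable tandem (Jackson-type) system, and its speed is a ratio of partition functions of the form
\[ s = \frac{\sum_{\ell} (\text{products of } b\text{'s}) - \sum_\ell (\text{products of } a\text{'s})}{\cdots} . \]
Its sign is that of $\sum_{m=i}^{j}\log(b_m/a_m)$, roughly. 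The potential construction of the clouds, as the $[h_k+1,h_{k+1}]$ ending at record ascents of $V$, should force the sign to be negative. For the early clouds I would argue directly from the cloud-merging criterion: a cloud with nonnegative speed would have to split off from, or merge with, a cloud to its right, since all clouds to the right eventually have negative speed by \eqref{eq_speeds_asymp}.

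\textbf{Step 2: asymptotics of $\log|v_n|^{-1}$.} From the analysis behind Theorem~\ref{thm:RE_Sinai}\ref{thm:RE_Sinai-ii}--\ref{thm:RE_Sinai-iv}, I expect the following estimate: for the cloud $[h_k+1,h_{k+1}]$,
\[ \log|s_k|^{-1} = D_k + O(\log h_{k+1}) \quad \text{a.s.}, \]
where $D_k$ is the depth of the $k$th record well of the potential $V$. More precisely, $D_k$ is the maximal rise $\max_{h_k < i\le j\le h_{k+1}} (V_j - V_i)$, or the equivalent height of the trap. This is presumably how \eqref{eq_sizes_depths} is obtained, with $S_k=h_{k+1}-h_k$. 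Consequently $\log|v_n|^{-1} = D(n) + o(\sqrt n)$, where $D(n)$ is the depth of the record well containing $n$. The error term is $O(\log n)$ up to polylog factors, since $\log h_{k+1} = O(\log n)$ uniformly in the relevant range, using $h_{k+1}\le n^{C}$ w.h.p.

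\textbf{Step 3: invariance principle.} Apply Donsker to $V$: $\big(V_{\lfloor nt\rfloor}/(\sigma\sqrt n)\big)_{t \geq 0} \Rightarrow W$. The functional "depth of the record well straddling time $1$" is then a Brownian functional $Z$. Concretely, let $H$ be the running maximal-rise process $\max_{0\le s\le u\le t}(W_u-W_s)$. The well containing time~$1$ is delimited by the last time before $1$ and the first time after $1$ at which a new record of the rise is set, and $Z$ is the depth at the end of that interval; this matches \eqref{df_Y_Sinai}. Scaling gives $n^{-1/2}D(n)\Rightarrow \sigma Z$. Identifying the law \eqref{eq:Z-distribution} is a Brownian computation: it is the theta-series distribution of the range-type functional, i.e.\ $\Pr(Z\le z)$ equals the probability that the Brownian "drawdown-range" stays below $z$ up to time $1$, via the eigen-expansion on an interval of length $z$. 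Alternatively, and more simply, I would take the law as given by the definition of $Z$ in \eqref{df_Y_Sinai}.

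\textbf{Main obstacle.} The hard step is continuity of the record-well functional under Donsker, because the endpoint $h_{k+1}$ is a first-passage-type time that is not continuous in the path. The plan is to show that $W$ a.s.\ has no ties at the relevant record times, which makes the functional a.s.\ continuous at $W$. Condition~\eqref{ass:lh} then controls discretisation: ladder heights are not tiny, so near-ties in the discrete walk are unlikely. Combining this with the continuous mapping theorem and Slutsky yields \eqref{eq_Sinai_particle}.
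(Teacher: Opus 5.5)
\textbf{There is a genuine gap.} Your Step~1 and the overall architecture (speed $\approx\re^{-\text{depth}}$, then Donsker and continuous mapping) match the paper. Step~1 is fine: negativity of the early clouds follows from monotonicity of finite-system cloud speeds and the strict record at $h_{k_0}$. The gap is that the quantity you push through the invariance principle is wrong, in two ways.

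\textbf{The depth in Step~2.} The exponent is the \emph{depth} $\cD(h_k,h_{k+1})=d_{k+1}=V(h_k)-V(u_{k+1})$, i.e.\ the maximal drawdown from the running maximum, as in Lemma~\ref{l_depth_speed}. It is not the maximal rise $\max_{i\le j}(V_j-V_i)$. On the well $[h_k,h_{k+1}]$ the maximal rise equals $d_{k+1}+\bigl(V(h_{k+1})-V(h_k)\bigr)$. The extra ladder increment is of the same order as $d_{k+1}$: in the Brownian limit it is exponentially distributed with mean equal to that depth. So your estimate $\log|s_k|^{-1}=D_k+O(\log h_{k+1})$ fails with your choice of $D_k$. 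With $D_k=d_{k+1}$ it is correct, and it follows from Lemma~\ref{l_depth_speed} on the events $G_1\cap G_3$.

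\textbf{The Brownian functional in Step~3.} More seriously, your description of $Z$ has no look-ahead, and the law you propose for it is not \eqref{eq:Z-distribution}.
\begin{itemize}
\item Whether $n$ lies in a deeper well is decided by the future. For $h_{k'-1}<n<\ell_{k'}$ the drawdown seen up to time $n$ is only $d_{k'-1}$, yet $n$ belongs to the well $[h_{k'-1}+1,h_{k'}]$ of depth $d_{k'}$.
\item Delimiting wells by record times of the running maximal drawdown (even after replacing ``rise'' by ``drawdown'') and reading the depth off that interval just gives $\max_{t\le 1}(M_t-B_t)$, where $M_t:=\max_{s\le t}B_s$.
\item Your suggested law, that the drawdown stays below $z$ up to time~$1$, is $\Pr(\tau_z>1)$ with $\tau_z$ the hitting time of $z$ by $M-B$. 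This equals $\frac{4}{\pi}\sum_{m\ge0}\frac{(-1)^m}{2m+1}\exp\{-\pi^2(2m+1)^2/(8z^2)\}$, a different theta series from \eqref{eq:Z-distribution}.
\end{itemize}

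\textbf{What the paper does instead.} The right functional, which is what \eqref{df_Y_Sinai} defines, is $Z=\sup_{0\le t\le\rho}(M_t-B_t)$ with $\rho:=\inf\{t\ge1:B_t>M_1\}$, the end of the excursion of $M-B$ straddling time~$1$.
\begin{itemize}
\item Lemma~\ref{lem:construction} gives that, for $h_{k'-1}<n<h_{k'}$, the maximal drawdown before the first record of $V$ after $n$ is exactly $d_{k'}$. Hence $D_1(\mathcal V_n)=n^{-1/2}d_{k'}$ for the linear interpolation $\mathcal V_n$.
\item The exceptional event $\{n=h_k\text{ for some }k\}$ has probability $O(n^{-1/2})$.
\item Continuity of $D_1$ at $B$ needs only $B_1<M_1$ a.s.\ and immediate crossing of level $M_1$. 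So \eqref{ass:lh} plays no role in the discretisation; it enters only through the factor $1-\re^{-\Delta_k}$ in the speed lower bound.
\end{itemize}

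\textbf{The law of $Z$.} The missing idea is the identity $\{Z<z\}=\{\sigma_z>1\}$, where $\sigma_z$ is the left endpoint of the first excursion of $M-B$ reaching height $z$. Then Brownian scaling and the Neveu--Pitman transform
$\Exp\re^{-\lambda\sigma_1}=\tanh\sqrt{2\lambda}/\sqrt{2\lambda}=\sum_{m\ge0}\bigl(\lambda+\pi^2(2m+1)^2/8\bigr)^{-1}$
give the density of $\sigma_1$, and hence \eqref{eq:Z-distribution}. Simply taking the law as given by the definition of $Z$ does not prove the displayed formula.
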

\begin{remark}
\label{rem:distributions}
    The explicit distributions for the limit random variables $Y$ and $Z$ from~\eqref{eq:Y-distribution} and~\eqref{eq:Z-distribution} 
    are reminiscent of certain limit distributions in the theory of RWRE, such as identified
    by Golosov~\cite{golosov} and Kesten~\cite{kesten}.
    These distributions likewise arise from potential-well-type functionals of Brownian motion. We have, however, been unable to find either of the distributions of $Y$ or $Z$ explicitly in the RWRE literature.
\end{remark}

\subsection{Results: Kesten--Kozlov--Spitzer regime}
\label{sec:KKS-results}

The case when
$\IE \log ( \Theta^- / \Theta^+ ) \neq 0$ is 
sometimes referred to in the RWRE literature
as the \emph{Kesten--Kozlov--Spitzer} or KKS regime, after~\cite{KKS}.
For some of the results in this section, we restrict our initial configuration to the \emph{finite configurations} (meaning configurations with finitely many holes), $\bbXB \subset \bbX$, given by
\begin{equation}
    \label{eq:finite}
    \bbXB := \{ x \in \bbX : x_{i+1} - x_i =1 \text{ for all but finitely many } i \}.
\end{equation}
First, we will show that the case
\begin{equation}
\label{eq:KKS<0}
 \IE \log\frac{\Theta^-}{\Theta^+} < 0
\end{equation}
is relatively easy to treat:  some finite ($\omega$-dependent)
number of leftmost particles will escape
to~$-\infty$ (at least in probability), 
and the rest forms a stable system.
Indeed: 
\begin{theorem}
\label{thm:KKS<0} 
Assume
Condition~\eqref{ass:ue} and~\eqref{eq:KKS<0}. Let $X(0) = x \in \bbXB$. 
Then there exists 
an a.s.\ finite random variable 
$N=N(\omega)$, whose 
 precise definition is given in~\eqref{df_N_KKS} below, 
such that, for some $c_1, c_2>0$,  $\IP[N>k]\leq c_1 \re^{-c_2 k}$ for all $k \in \ZP$,
and,  $\Pr$-a.s.,
$\{N,N+1,\ldots\}$ is a semi-infinite $(\omega,x)$-stable cloud,  as 
defined in Definition~\ref{def:stable-cloud-infinite}.
\end{theorem}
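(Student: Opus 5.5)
The plan is to realise the set $\{N,N+1,\ldots\}$ as the part of the system lying to the right of the last "record" of the potential random walk, and to read off stability from the product-form (Jackson network) invariant measure of the gap process. Throughout, write $V_n=\sum_{j\le n}\log(a_j/b_j)$, which under~\eqref{eq:KKS<0} is a random walk with negative drift and, by~\eqref{unif_ellipticity}, increments bounded by $\kappa$.

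\emph{Step 1: definition of $N$ and its tail.} I would define $N$ via~\eqref{df_N_KKS} as one plus the last index $m$ at which $V$ attains its overall maximum, that is, $V_m\ge V_n$ for all $n\ge 0$. Equivalently, $N$ is the smallest index such that
\[ \sum_{j=N}^{\ell}\log\frac{a_j}{b_j}<0 \quad\text{for all } \ell\ge N, \]
so every right-tail block of $[N,\infty)$ has net rightward intrinsic pull. Then $N<\infty$ a.s. because $V_n\to-\infty$. For the tail, note that $\{N>k\}$ requires $\max_{n\ge k}V_n\ge \max_{n< k}V_n\ge 0$. Since the increments are bounded and have negative mean, a Chernoff bound gives $\IP[V_k\ge -ck]\le \re^{-c'k}$. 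Combining this with the standard exponential bound $\IP[\sup_{n\ge 0}(V_{k+n}-V_k)\ge ck]\le \re^{-c''k}$ gives $\IP[N>k]\le c_1\re^{-c_2k}$.

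\emph{Step 2: stability of $[N,\infty)$.} For the gap process $Y_i=X_{i+1}-X_i-1$, $i\ge N$, the semi-infinite subsystem (with particle $N-1$ treated as absent) has a product geometric invariant measure. In it, $Y_i$ has parameter $\rho_i=\exp\bigl(\sum_{j=N}^{i}\log(a_j/b_j)\bigr)$, possibly up to the exact index convention. By the choice of $N$ every $\rho_i<1$, and moreover $\sum_i\rho_i<\infty$ a.s. since $V$ drifts linearly to $-\infty$. Under this measure the subsystem has speed zero. Starting from $x\in\bbXB$, the configuration differs from the fully packed one by finitely many holes. I would invoke the semi-infinite stability criterion of~\cite{MPW25}, or prove tightness directly by monotone coupling with the packed and stationary starts, to get tightness of each $X_j(t)-X_k(t)$ for $j,k\ge N$. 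This gives condition (i) of Definition~\ref{def:stable-cloud-infinite}.

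\emph{Step 3: separation from the left part.} The finite block $[1,N-1]$ has $\sum_{j=i}^{N-1}\log(a_j/b_j)\ge 0$ for each $i\le N-1$, since $N-1$ is a maximum of $V$. By the finite-system theory of~\cite{MMPW23}, its clouds therefore have strictly negative speeds, or at worst zero speed with diffusive spreading in the boundary-equality case. Meanwhile the right block stays tight around a position that is $O(1)$ in probability, because its speed is zero and it is pushed from the left only when particle $N-1$ is adjacent. A coupling argument then handles the interaction: the left block is dominated by the same block with a wall, and the right block is squeezed between copies with and without the left particles. Together these give $X_N(t)-X_{N-1}(t)\to\infty$ in probability, which is condition (ii).

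The main obstacle I expect is Step 3 in the boundary case where some partial sum equals zero, together with making the coupling rigorous. Only an in-probability divergence of order $\sqrt t$ is available there. I would first try to show that equality has probability zero given the strict maximum, and otherwise handle it by a recurrence/null-recurrence argument for the gap $X_N-X_{N-1}$.
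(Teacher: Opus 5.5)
Your Steps 1 and 2 match the paper. You use the same $N$ (one plus the last global maximum of $V$), the same inclusion $\{N>k\}\subseteq\{\sup_{j\ge k}V(j)\ge 0\}$ with a Chernoff bound, and the same summable zero-speed solution $\re^{V(i)-V(N-1)}<1$ fed into the stability criterion of~\cite{MPW25}. You should add the routine coupling fact that deleting particles $[1,N-1]$ can only enlarge the gaps of $[N,\infty)$; this is what transfers tightness from the isolated subsystem to the full system. When $N-1$ is the \emph{unique} global maximum, your Step~3 is also the paper's argument. Note, though, that the upper bound on $X_{N-1}$ comes from deleting the particles to its right. Putting a wall (infinite pressure) at $N-1$ pushes the block left and gives the reverse inequality.

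The genuine gap is the tied case, $V(j_0)=V(N-1)$ for some $j_0<N-1$, which you flag but leave open. This is not a null event. The hypotheses allow lattice laws, e.g.\ $(\Theta^-,\Theta^+)\in\{(\re,1),(1,\re)\}$ with $\IP[\Theta^->\Theta^+]<1/2$. Then the increments $+1,-1,+1,-1$, followed by $V$ never returning to level $1$, occur with positive probability, so your first fallback cannot work. Your squeezing argument also breaks down there. The rightmost cloud of the isolated block $[1,N-1]$ has speed zero, since it contains the zero-speed block starting after the last tie. So $X'_{N-1}(t)$ fluctuates diffusively with both signs, and the bound $X_N-X_{N-1}\ge X''_N-X'_{N-1}$ does not diverge.

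What is missing is a lower bound on $\eta_{N-1}(t)$ that sees the interaction. The paper's truncation supplies exactly the rigorous form of your null-recurrence heuristic:
\begin{itemize}
\item Take $j_0$ to be the first global maximum and impose infinite pressure at $M\ge N$. The second-class-customer coupling gives $\eta^{(M)}_{N-1}(t)\le\eta_{N-1}(t)$.
\item Set $A_k:=\sum_{\ell=j_0+1}^{k}a_\ell^{-1}\re^{V(j_0)-V(\ell-1)}$. The block $[j_0+1,M]$ under pressure has speed $-1/A_M$ and intensities $\rho^{(M)}_k=\re^{V(k)-V(j_0)}(1-A_k/A_M)\in(0,1)$, so it is intrinsically stable.
\item Since $A_M\to\infty$, its speed increases to $0$. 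It therefore eventually exceeds the strictly negative speed of the rightmost cloud of $[1,j_0]$, making it the rightmost cloud of the truncated system, behind a transient queue.
\item At the tie, $\rho^{(M)}_{N-1}=1-A_{N-1}/A_M$. Hence $\limsup_{t\to\infty}\IP^x_\omega[\eta_{N-1}(t)\le B]\le 1-(\rho^{(M)}_{N-1})^{B+1}$ for every $M$.
\item Letting $M\to\infty$, so that $\rho^{(M)}_{N-1}\to1$, gives $X_N(t)-X_{N-1}(t)\to\infty$ in probability.
\end{itemize}
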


Note that Theorem~\ref{thm:KKS<0} does not need Condition~\eqref{ass:nestling} or~\eqref{ass:lh}.
We expect that it should be possible to extend the range of initial conditions for which Theorem~\ref{thm:KKS<0} holds: see Remark~\ref{rem:KKS-IC} below. 
A more interesting case is 
\begin{equation}
\label{eq:KKS>0}
 \IE \log\frac{\Theta^-}{\Theta^+} > 0.
\end{equation}
In this case, we show that there are infinitely many stable clouds, and  all particles
will go to~$-\infty$ at speeds whose magnitudes decrease polynomially with the particle label. 

\begin{theorem}
\label{thm:KKS>0}
 Assume 
 Conditions~\eqref{ass:nestling}, \eqref{ass:ue}, 
and~\eqref{eq:KKS>0}. 
Let $\beta>0$ be the unique positive number
such that 
\[
 \IE\Bigl[ \bigl( {\Theta^+} / {\Theta^-}\bigr)^{1/\beta} \Bigr] =1.
\]
Let $X(0) = x \in \bbX$. 
Then for almost all environments the system 
is decomposed into an infinite number of finite 
stable clouds.
To every particle $n \in \N$ there is an associated {speed}
$v_n =v_n(\omega)$ 
such that, for   $\IP$-a.a.~$\omega$ 
we have the strong law
\begin{equation}
    \label{eq:slln-kks}
    \lim_{t \to \infty} t^{-1} X_n (t) = v_n, ~\text{$\IP^x_\omega$-a.s.},
\end{equation}
and we have, for $\IP$-a.a.~$\omega$, 
$v_n =-n^{-\beta(1+o(1))}$ as $n\to\infty$.
\end{theorem}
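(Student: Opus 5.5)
We plan to reduce Theorem~\ref{thm:KKS>0} to the finite-system cloud theory of~\cite{MMPW23}, with the potential $V_n := \sum_{i=1}^n \log(a_i/b_i)$ as the organising object. Under~\eqref{eq:KKS>0} the walk $V$ drifts to $+\infty$. A block of particles $[i,j]$ is self-stable when, for every $\ell$, the internal stability condition for a finite system with the block's rates holds. In product form this says that the partial sums $\sum_{m=i}^{\ell}\re^{-(V_m - V_{i-1})}$ weighted against the tail are balanced. So the natural cloud boundaries are the \emph{ladder epochs} of $V$: indices $h$ with $V_h < \min_{m>h} V_m$, i.e.\ future minima. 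Because $V$ has positive drift, such future-minimum epochs form an infinite sequence $h_1<h_2<\cdots$ with i.i.d.\ gaps, and each gap has exponential tails under~\eqref{ass:ue}.

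\textbf{Step 1: finite clouds.} Let $G_k := [h_{k-1}+1,\, h_k]$. We then define each cloud speed as the finite-system speed from~\cite{MMPW23} applied to the maximal block obtained by the ``pooling'' algorithm: merge adjacent candidate blocks while the left block's intrinsic speed is slower (more negative is faster leftward) than the right one's. Particles to the right of a future minimum have potential above $V_{h_k}$ for all time. We will show that the speed of the block to the right is strictly larger (less negative) than that of the block to the left. Because pooling only happens finitely often near each boundary, we get infinitely many finite clouds.

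\textbf{Step 2: the semi-infinite system.} We then pass to the semi-infinite system by coupling. The first $h_k$ particles are dominated by a finite system on $[1,h_k]$ with a reflecting/absorbing wall supplied by the right neighbours, and the monotone couplings give sandwiching bounds. The tail to the right of $h_k$ moves with speed at least $s_{k+1} > s_k$, so the gap between clouds diverges linearly. The SLLN~\eqref{eq:slln-kks} for each particle then follows from the finite-system SLLN of~\cite{MMPW23}, and the cloud structure is independent of $x$ because the separations grow linearly regardless of the initial configuration.

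\textbf{Step 3: speed asymptotics.} For the finite block we use the explicit formula for the speed, $v = -\bigl(\sum_{m} \re^{-(V_m - V_{h_{k-1}})}\cdots\bigr)^{-1}$ up to bounded factors. This formula says $|v_n|$ is comparable to $1/\sum_{m\le n}\re^{\,\cdots}$, a Kesten-type random series of products $\prod (b_i/a_i)$. The magnitude of the relevant term up to index $n$ is governed by the largest excursion of $-V$ above its future minimum among the first $n$ steps, i.e.\ by $\max_{m\le n}\max_{\ell\in[m,n]}(V_m - V_\ell)$, or more precisely by $\log$ of the exponential functional. Cramér/large-deviation theory for a walk with positive drift, with $\beta$ defined by $\IE[\re^{-V_1/\beta}]=1$, shows that this maximal backtrack depth is $(1+o(1))\beta\log n$ a.s. The standard argument combines the tail $\IP[\text{excursion depth}>u]\asymp \re^{-u/\beta}$ (Iglehart/KKS) with Borel--Cantelli along geometric subsequences. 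Hence $|v_n| = n^{-\beta(1+o(1))}$.

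We expect the main obstacle to be Step~2: transferring the finite-system results of~\cite{MMPW23} to the infinite system, and in particular showing that the influence of infinitely many particles to the right does not alter the cloud speeds. The plan is to attack this first via monotone coupling between the true process and truncated systems with frozen or free right boundaries.
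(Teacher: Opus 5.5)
\textbf{The gap is in Step~1.} Future minima of $V$ are not cloud boundaries, and the separation claim on which Steps~1 and~2 both rest is false.

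Take one of your $h_k$ with $V(h)<V(h-1)$, where $h=h_k$, and put $b:=\min\{j\geq h: V(j)\geq V(h-1)\}>h$. Then $V(j)<V(h-1)\leq V(b)$ for $j\in[h,b-1]$. So $[h,b]$ is intrinsically stable by Lemma~\ref{l_stable_almost0}, and particles $h$ and $h+1$ lie in the same cloud (Remark~\ref{rems:intrinsic-stability}\ref{rems:intrinsic-stability-iii}). By~\eqref{ass:nestling} and ergodicity this happens for infinitely many $k$. A future minimum is typically the \emph{bottom} of a well, i.e.\ the middle of a jam. By contrast, Lemma~\ref{l_global_maxima} says a cloud with negative speed ends at the maximum of $V$ over its interval.

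Your blocks fail for the same reason. Since $V(h_k)=\min_{m>h_{k-1}}V(m)$, every $G_k$ with at least two particles attains the maximum of $V$ over $[h_{k-1},h_k]$ at an interior point. By Lemma~\ref{l_global_maxima} it is therefore not even intrinsically stable.

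The $G_k$ are also i.i.d., so any speed computed from a single block is i.i.d.\ in $k$. Such a sequence cannot be non-decreasing from left to right, as cloud speeds must be. Pooling must therefore merge unboundedly many blocks, and ``pooling only happens finitely often near each boundary'' has no basis. This also clashes with your own Step~3, where the speed of particle $n$ is governed by the deepest well in all of $[0,n]$.

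\textbf{The missing idea} is not to identify the clouds at all in this regime (cf.\ Remark~\ref{rem:beta-existence}\ref{rem:beta-existence-ii}). Instead, the paper \emph{deduces} infinitely many clouds from two-sided speed bounds.

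For the upper bound on $|v_k|$ you have the right quantity; your Iglehart/Borel--Cantelli route to a maximal depth $\sim\beta\log k$ is a fine substitute for the appeal to~\cite{FGP10}. An intrinsically stable well $[a+1,b]\subseteq[1,k]$ of depth at least $(\beta-\delta)\log k$ lies, in any infinite-pressure truncation, in a cloud of speed at least $-\eps_0^{-1}k^{-(\beta-\delta)}$, by Proposition~\ref{prop:stoch-monotonicity} and~\eqref{depth_speed_upper}. Monotonicity of cloud speeds then passes this bound to the cloud of $k$.

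For the lower bound, ``up to bounded factors'' is wrong, for two reasons:
\begin{itemize}
\item Your expression is, up to bounded factors, the infinite-pressure speed~\eqref{speed_infinite_pressure}, which bounds $|v|$ only from above. The free speed~\eqref{V_v_explicit} carries the factor $1-\re^{-(V(j)-V(i_0))}$, which can be arbitrarily small since~\eqref{ass:lh} is not assumed.
\item The denominator has up to $n$ terms, so the crude bound~\eqref{depth_speed_lower} loses a factor $n^{-1}$, which destroys $n^{-\beta(1+o(1))}$.
\end{itemize}
The paper handles the first point by truncating at a good site $k_1\in[k,k+\log^2k]$, i.e.\ one with $V(k_1)\geq u_0+\max_{j<k_1}V(j)$. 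It handles the second by proving $\Psi(I)\leq 2\log^2k$ for all $I\subseteq[0,2k]$.

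Now write $w^{(M)}_j$ for particle speeds in $[1,M]$ with infinite pressure at $M=k^2$. For $\eps<\beta/3$ the two bounds give $w^{(M)}_k<w^{(M)}_M$, hence a transient queue in $[k,M-1]$. Corollary~\ref{cor:inf_finite_queues} and Lemma~\ref{lem:couple_infinite_pressure} then yield infinitely many finite clouds, the strong law, and the bounds on $v_k$.

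Your Step~2 couplings are in the right spirit, but they need exactly such a transient separating queue in a finite truncation as input. Asserting that the tail beyond $h_k$ has speed at least $s_{k+1}$ presupposes the semi-infinite behaviour you are trying to prove.
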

\begin{remarks}
\label{rem:beta-existence}
\begin{myenumi}[label=(\roman*)]
\item\label{rem:beta-existence-i}
    Existence and uniqueness of $\beta>0$ in the hypothesis of Theorem~\ref{thm:KKS>0} 
    follows from the fact that,
    using~\eqref{ass:nestling} and~\eqref{eq:KKS>0}, the function $s \mapsto \log \Exp [ ( \Theta^+ / \Theta^-)^s ]$ is strictly convex, has value $0$ and negative derivative at $s=0$, and tends to $+\infty$ as $s \to \infty$, so crosses $0$ exactly once for $s>0$.
    \item \label{rem:beta-existence-ii}
In the KKS regime it is harder to ``separate'' clouds. In Sinai's regime, the successive record-well depths grow exponentially with the well index, yielding strong separation between the associated speeds. In the KKS regime, the deepest relevant wells up to particle $n$ have depth only of order $\log n$, and the resulting speeds are separated only on polynomial scales. For this reason we do not provide an analogue of the detailed cloud identification in Theorem~\ref{thm:RE_Sinai}.
    \end{myenumi}
\end{remarks}

\subsection{Some comments on the literature}
\label{sec:literature}

The process of inter-particle separations
associated to the exclusion process is a particular case
of the \emph{zero-range} process. 
Since the 1990s, there has been extensive 
study of zero-range and exclusion processes with disordered 
environments, including heterogeneous but deterministic,  or randomly chosen,
rates;
in the exclusion process, rates attached to particles (as in our case)
correspond to rates attached to \emph{sites} in the zero-range process.
However, to the best of the authors' knowledge, the present setting is not considered in any previous work. 
We refer to~\cite{FS07,AFGL00,GL98,BMRS15,BMRS17,BMRS21} and references therein for
extensive results on   phenomena concerning convergence to equilibrium and hydrodynamic limits. Our emphasis is different: rather than equilibrium or hydrodynamic behaviour, we study the almost-sure large-scale decomposition of a fixed random environment into dynamically coherent particle clouds, and relate this decomposition to the record-well structure of its random potential.

The papers closest to the present setting include
work of 
Benjamini~\emph{et al.}~\cite{BFL96},
Krug and Ferrari~\cite{KF96}, and Evans~\cite{evans96,evans},  
and a more recent programme of Bahadoran~\emph{et al.}~\cite{BMRS15,BMRS17,BMRS21}. In several of these papers the (non-random or random) environment is  chosen so that there is a global drift of particles in one direction (for example, in~\cite{KF96,evans} the process is totally asymmetric with a random environment of one-directional rates attached to particles and taking values in a bounded interval), and various aspects of the dynamics are dominated by the slowest particles.

Another set of literature looks at exclusion processes in which the random environment is attached to sites, rather than particles: we refer to, e.g.~\cite{faggionato,LY24,frs,schmid19,jsi}. Because of the connection between exclusion processes and Jackson queueing networks
(see Section~\ref{sec:jackson} below) there is also to be found some adjacent literature on queueing networks in random environment: we refer to, e.g.~\cite{KDO16,GPSY16,zhu94}.

\subsection{Outline of the paper and proof strategy}
\label{sec:outline}

Section~\ref{sec:prep} gives some preparatory results, partially
known from prior work, including the representation of the particle
system as a Jackson network (\S\ref{sec:jackson}), some results on finite systems (\S\ref{sec:finite-systems}), and some results on stable clouds of infinite systems (\S\ref{sec:clouds-infinite}). 
Section~\ref{sec:potentials}
introduces the potential function that is our main tool in the proofs in both Sinai and KKS regimes.
Section~\ref{sec:sinai-proofs} gives the proofs in the Sinai regime,
constructing potential wells and analysing their depths (\S\ref{s_infinite_Sinai}),
identifying the candidate clouds and controlling their speeds (\S\ref{sec:candidate-clouds}),
and then presenting in turn the proofs of Theorems~\ref{thm:RE_Sinai} and~\ref{t_Sinai_particle} (\S\ref{sec:sinai-theorem-proofs}), up to the exact identifications of the distributions
of $Y$ and $Z$, which are given in \S\ref{sec:explicit_limits}. The proofs for the KKS regimes 
are given in Section~\ref{sec:proofs_KKS}. The Appendix
contains Lemma~\ref{lem:lh} on the condition~\eqref{ass:lh}.

Before getting into technical details, let us give some heuristics about the role of the potential function that lies at the heart of this paper. 
First, it turns out (due to explicit formulas seen e.g.~in~\cite{MMPW23}) that   a putative cloud of particles is stable if its shape corresponds to a  well in the potential, and, moreover, the depth of that well determines, roughly, the speed of that cloud.
The main difficulty is to pass from potential wells to clouds in the interacting infinite system. A block that is stable in isolation need not remain a separate cloud when other particles are present.
 In Sinai's regime, the potential function is approximately a trajectory of Brownian motion, and, roughly, scale invariance shows that the statistics of successive wells are related one to the next by a random scale factor. Understanding this picture gives rise to the main results on Sinai's regime. 
In the KKS regime the same potential picture persists, but the potential has non-zero drift, leading to a different hierarchy of wells and hence of cloud speeds.

\section{Preparatory results}
\label{sec:prep}

\subsection{Jackson network representation}
\label{sec:jackson}

In this section we recall from~\cite{MPW25} a representation of the particle system in terms of a 
Jackson network of infinitely many queues; 
the link between exclusion processes and Jackson networks is well known,
going back at least to~\cite{kipnis} (see~\cite{MMPW23,MPW25} for further literature). 

First, in some generality, consider a countably infinite system of queues, enumerated by~$\N$. The parameters of the system are arrival rates $\lambda = (\lambda_i)_{i \in \N}$,
service rates $\mu = (\mu_i)_{i \in \N}$ and $P = (P_{ij})_{i,j\in\N}$, a sub-stochastic routing matrix.
Exogenous customers entering the system arrive at queue $i \in \N$ via an independent Poisson process of rate $\lambda_i \in \RP$.
Queue $i \in \N$ serves customers
at exponential rate $\mu_i \in \RP$
(i.e., in Kendall's notation, we have 
an $M/M/1$ queue at each server).
 Once a customer at queue $i$ is served, it is routed to a queue~$j$
with probability $P_{ij}$, while with probability $\delta_i := 1 - \sum_{j \in \N} P_{ij}$ the customer departs from the system.

Provided $\sum_{i \in \N} \lambda_i >0$ and $\sum_{i \in \N} \delta_i >0$, customers both enter and leave the system, and
it is called an \emph{open Jackson network}. We assume that every queue can be \emph{filled}, 
meaning that, for every $i \in \N$, there is a $j \in \N$ and $k \in \ZP$ for which $\lambda_j > 0$ and $(P^k)_{ji}>0$,
and that every queue can be \emph{drained}, meaning that,  for every $i \in \N$, there is a $j \in \N$ and $k \in \ZP$ for which $\delta_j > 0$ and $(P^k)_{ij}>0$.

We now explain the Jackson network connection to the particle system
model as described in Section~\ref{sec:model}; we use the notation introduced there.
Define
$\eta_k (t) := X_{k+1} (t) - X_k (t) -1$,
to be
the number of holes
between consecutive particles at time~$t\in\RP$. Define
\begin{equation}
\label{eq:mu-def}
\mu_i := b_i + a_{i+1}, \text{ for }
i \in \N, \end{equation}
and
\begin{equation}
\label{eq:lambda-def}
\lambda_1 := a_1,   \text{ and } \lambda_i := 0 \text{ for } i \geq 2.
\end{equation} 
Also define the  matrix $P:= (P_{i,j})_{i,j \in \N}$ by
\begin{equation}
\label{eq:P-def}
\begin{split}
P_{i,i-1} & := \frac{b_i}{\mu_i} = \frac{b_i}{b_i+a_{i+1}}, \text{ for } i \geq 2; \\
P_{i,i+1} & := \frac{a_{i+1}}{\mu_i} = \frac{a_{i+1}}{b_i+a_{i+1}}, \text{ for } i \geq 1;
\end{split}
\end{equation}
and $P_{i,j}:=0$ for all $i,j$ with $\vert i-j\vert \neq 1$.

The process $\eta := (\eta_{i})_{i \in \N}$ can, under appropriate
conditions on the rates and initial states, 
be realised as the queue-length process for a corresponding Jackson network, namely,
the Jackson network with parameters $\lambda, \mu$ and $P$ given as functions of $(a_i,b_i)_{i \in \N}$ through formulas~\eqref{eq:mu-def}, \eqref{eq:lambda-def} and~\eqref{eq:P-def}. 

In the queueing network, the customers waiting at queue~$k$
are equal in number to the unoccupied sites between particles $k$ and $k+1$ in the particle system. 
Exogenous customers enter the queueing network only when the leftmost particle jumps to the
left (rate $a_1$). Customers at queue~$k$ are served if either particle~$k$ jumps right
(rate $b_k$) or if particle $k+1$ jumps left (rate $a_{k+1}$). If  
particle~$k$ jumps right, the customer 
is routed to queue $k-1$ (if $k \geq 2$, at rate $b_k = \mu_k P_{k,k-1}$) or leaves the system (if $k=1$).
If particle $k+1$ jumps left, the customer is routed to queue $k+1$ (at rate $a_{k+1} = \mu_k P_{k, k+1}$). 
Customers leave the system only when the leftmost particle jumps to the right (rate $\mu_1 \delta_1 = b_1$).
Then a particle conservation argument leads to a construction of $X_1$, the position of the left-most particle,  via
 \begin{equation}
     \label{eq:conservation}
     X_1 (t) - X_1 (0) = - \sum_{k \in \N} \bigl( \eta_k (t) - \eta_k (0) \bigr), \text{ whenever } \sum_{k \in \N} \eta_k (0) < \infty.
 \end{equation}
Under condition \eqref{ass:ue},
existence and construction for both the queueing network and the associated particle system is given for fixed 
environment in~\cite{MPW25}.

\subsection{Some results on finite systems}
\label{sec:finite-systems}

Later we will use repeatedly the concept of \emph{intrinsic stability} of a  group $[i,j]$ 
of particles in the semi-infinite system, which means that if all particles outside the group were removed, then the whole system would be a single stable cloud (in the sense of Definition~\ref{def:stable-cloud-finite}); we give a definition in terms of formulas at Definition~\ref{def:intrinsic-stability} below.
In our later argument we often also use coupling and comparison results between the infinite system and various finite systems.
To study these finite systems we rely on some relevant results from~\cite{MMPW23}, which we summarize in this section. 

The finite system of~$n$ particles (hence $n-1$ queues)
has parameters $a_1, b_1, \ldots, a_n, b_n$,
having the same meaning as in Section~\ref{sec:model},
so that particle~$k \in [1,n]$ 
performs a nearest neighbour random walk, rate~$a_k$ to the left and $b_k$ to the right,
subject to the exclusion rule.
The Jackson network correspondence is similar to that 
described in Section~\ref{sec:jackson} 
(see~\cite[\S 3]{MMPW23}), but now
customers may also enter or depart the system from queue~$n-1$ (at rates $b_n$ and $a_n$, respectively). We denote by $X_{n,k} (t)$ the location of particle $k$ at time $t$.

Let us recall the following notations from~\cite{MPW25}:
\begin{align}
\label{eq:alpha-k-def}
 \alpha_0 & := 1, ~\text{and}~ \alpha_k := \frac{a_1\cdots a_k}{b_1\cdots b_k} \text{ for } k \in \N; \\
\label{eq:beta-k-def}
\beta_0 & := 0 ,  ~\text{and}~ \beta_k := \frac{1}{b_k}+\frac{a_k}{b_k b_{k-1}}+
  \cdots + \frac{a_k\cdots a_2}{b_k\cdots b_1}
   \text{ for }k\in \N.
\end{align}
Also define
   \begin{equation}
\label{eq:v_explicit_variant} 
 v_{n} := \frac{1-\alpha_n}{\beta_n} = 
 \frac{b_1 \cdots b_n- a_1\cdots a_n}
 {b_1\cdots b_{n-1}
 +b_1 \cdots b_{n-2} a_n +
  \cdots + a_2 \cdots a_n}.
\end{equation}
and set $\rho_k := \alpha_k + v_n \beta_k$ for $k \in [1,n-1]$.

The following result gives the intrinsic speed of a finite stable system.

\begin{proposition}[Finite stable systems; Corollary~2.6 in~\cite{MMPW23}]
\label{prop:finite-cloud}
    Let $n \in \N$ and suppose that $a_k , b_k \in (0,\infty)$ for $1 \leq k \leq n$. 
Then if $\rho_{k} < 1$ for all $k \in [1,n-1]$ the $n$-particle system is a single stable cloud and, for every $k \in [1,n]$, we have 
\[ 
\lim_{t \to \infty} \frac{X_{n,k} (t)}{t} = v_{n}, \as 
\]
\end{proposition}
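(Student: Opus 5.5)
We prove the proposition through the Jackson network representation of \S\ref{sec:jackson}, adapted to the $n$-particle system. This gives a network of $n-1$ queues, with gaps $\eta_k = X_{n,k+1}-X_{n,k}-1$ for $k\in[1,n-1]$.

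The first step is to solve the traffic equations. These say that the total arrival rate $\nu_k$ at queue $k$ balances inflows and outflows:
\[
\nu_k = \lambda_k + \nu_{k-1}P_{k-1,k} + \nu_{k+1}P_{k+1,k}.
\]
In the finite system there are exogenous arrivals at queue $1$ at rate $a_1$ and at queue $n-1$ at rate $b_n$. Since the network is a nearest-neighbour chain, the equations can be solved explicitly. We write $\nu_k = \mu_k r_k$ and look for a solution in which the net customer flux across each edge equals a common constant. That constant is $-v_n$, because the flux of holes equals the negative of the particle drift.

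Solving the resulting first-order recursion from the left end gives
\[
r_k = \alpha_k + v\beta_k ,
\]
and imposing the boundary condition at the right end pins down $v = v_n$ as in~\eqref{eq:v_explicit_variant}. Hence $\rho_k = r_k$ is the load of queue $k$. This step is a direct algebraic verification against~\eqref{eq:alpha-k-def}--\eqref{eq:beta-k-def}.

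The second step invokes Jackson's theorem for finite open networks. If every load satisfies $\rho_k<1$, the gap process $\eta$ is a positive recurrent irreducible Markov chain on $\ZP^{n-1}$, and its stationary law is the product of geometric distributions with parameters $\rho_k$. Irreducibility holds because all rates $a_k,b_k$ are positive.

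Positive recurrence immediately gives part (i) of Definition~\ref{def:stable-cloud-finite}. Indeed, $X_{n,n}-X_{n,1}$ is a function of the gaps, which converge in distribution, so the family is tight uniformly in $t$; on any finite time window tightness is trivial. Part (ii) is vacuous for the whole system.

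The third step identifies the speed. We write $X_{n,1}(t)$ as a martingale plus a compensator:
\[
X_{n,1}(t)=X_{n,1}(0)+M(t)+\int_0^t\big(b_1\mathbf 1\{\eta_1(s)>0\}-a_1\big)\,ds ,
\]
where $M$ is a martingale with bounded jumps and quadratic variation growing linearly in $t$, so $M(t)/t\to0$ almost surely. The ergodic theorem for the positive recurrent chain $\eta$ then gives
\[
\frac{X_{n,1}(t)}{t}\to b_1\rho_1-a_1 \quad\text{a.s.}
\]
It remains to check that $b_1\rho_1 - a_1 = v_n$. Using $\alpha_1=a_1/b_1$ and $\beta_1=1/b_1$, we get $b_1\rho_1-a_1 = b_1\big(a_1/b_1+v_n/b_1\big)-a_1 = v_n$, as required.

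Finally, the speeds of the other particles follow from the first. Each $X_{n,k}-X_{n,1}$ is a sum of gaps, and by positive recurrence $\eta(t)/t\to0$ almost surely. Hence every particle has the same almost sure speed $v_n$.

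The main obstacle is the first step. One must get the boundary terms right, namely arrivals at rate $b_n$ into queue $n-1$ and departures at rate $a_n$. One must also confirm that the stationary flux equals $v_n$ exactly, and not a variant with indices shifted. I would check this by direct substitution into the balance equations at each queue, including both endpoints.
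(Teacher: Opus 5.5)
Your proposal is correct. This paper gives no proof of its own and simply imports Corollary~2.6 of \cite{MMPW23}, and your argument follows the same Jackson-network route that result rests on: the nearest-neighbour traffic equations force a constant net flux, giving $\rho_k=\alpha_k+v\beta_k$ with $v=v_n$ fixed by the right boundary, then Jackson's product-form theorem, then an ergodic-theorem strong law. The one step you assert rather than prove, $\eta(t)/t\to 0$ almost surely, follows from regeneration at returns to the empty state (jumps and rates are bounded), or more simply from repeating your compensator argument for each particle $k$, whose stationary drift $b_k\rho_k-a_k\rho_{k-1}$ (with the conventions $\rho_0=\rho_n=1$) equals $v_n$ by the constant-flux identity.
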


We will also consider modifications of the dynamics of a group $[i,j]$
in the semi-infinite system 
which we call  \emph{infinite pressure} from the right, which corresponds to  setting $b_j:=0$, that is, the rightmost 
particle of the group is forbidden to jump to the right
(i.e., one may imagine 
that there is \emph{always} another particle just next 
to it, hence ``infinite pressure'').
We discuss some applications of that later; for now,
let us note that it is intuitively clear that this
operation preserves the intrinsic stability 
of a group. 
Putting the infinite pressure 
 from the right on the $n$-particle system described 
above amounts to setting $b_{n}:=0$.

 \begin{remark}
\label{rem:malyshev-infinite-pressure}
    The results of~\cite{MMPW23} permit $a_i =0$ and so, by swapping the roles of the $a_i$ and $b_i$ apply to the finite systems with infinite pressure considered here.
\end{remark}

We need the following result about infinite-pressure systems.

\begin{proposition}[Finite stable systems 
with infinite pressure]
\label{prop:finite-cloud_inf_pressure}
Assume that the $n$-particle system is stable, 
and then change the value of~$b_n$ to zero.
Then the resulting system is stable 
and its speed is given by
\begin{equation}
\label{eq:tilde_v_explicit_variant} 
 {\tilde v}_{n} := 
 - \frac{\alpha_n}{\beta_n} =
 -\frac{a_1\cdots a_n}
 {b_1\cdots b_{n-1}
 +b_1 \cdots b_{n-2} a_n +
  \cdots + a_2 \cdots a_n}.
\end{equation}
(Observe that, even though $\alpha_n$ and $\beta_n$ depend on $b_n$, the expression in~\eqref{eq:tilde_v_explicit_variant}  does not.)
\end{proposition}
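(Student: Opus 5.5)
The plan is to reduce to the finite-system results of \cite{MMPW23} via the symmetry noted in Remark~\ref{rem:malyshev-infinite-pressure}. The key observation is that setting $b_n:=0$ makes the rightmost particle jump only leftwards. Two verifications are then needed: first, that the stability criterion of Proposition~\ref{prop:finite-cloud}, extended to allow a zero rate, still holds after the modification; second, that the speed formula of Proposition~\ref{prop:finite-cloud} applied to the modified rates yields $\tilde v_n$.

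Concretely, I would use the Jackson network representation of the finite $n$-particle system from \cite[\S3]{MMPW23}. It has $n-1$ queues, with service rates $\mu_k=b_k+a_{k+1}$ and nearest-neighbour routing. Customers enter from the left at rate $a_1$ and from the right at rate $b_n$.

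Setting $b_n=0$ removes the right-side arrivals but leaves the right-side departures at rate $a_n$. The traffic equations then become a birth--death-type system with one-sided input. For a nearest-neighbour network the solution is the explicit one from \cite{MMPW23}: the loads are $\rho_k=\alpha_k+v\beta_k$, where $v$ is determined by the right boundary condition.

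Next I would compute $v$ for the modified system. In \cite{MMPW23} the boundary condition is effectively $\rho_n=1$ with $\rho_n:=\alpha_n+v\beta_n$. Since $\alpha_n$ and $\beta_n$ both carry the factor $1/b_n$, I would instead work with $b_n\alpha_n$ and $b_n\beta_n$, which stay finite as $b_n\to0$. The condition $\rho_n=1$, that is $\alpha_n+v\beta_n=1$, becomes $b_n\alpha_n+v\,b_n\beta_n=b_n$. Letting $b_n\to 0$ gives $v=-\alpha_n/\beta_n$, which is $\tilde v_n$; the factor $1/b_n$ cancels, matching the parenthetical remark in the statement.

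For stability, the loads $\rho_k$ for $k\le n-1$ depend on $b_n$ only through $v$. Moreover,
\[
\tilde v_n-v_n=\frac{-\alpha_n-(1-\alpha_n)}{\beta_n}=-\frac{1}{\beta_n}<0.
\]
Because $\beta_k>0$, this gives $\tilde\rho_k=\alpha_k+\tilde v_n\beta_k<\rho_k<1$ for all $k\in[1,n-1]$. So the stability criterion of Proposition~\ref{prop:finite-cloud}, extended via Remark~\ref{rem:malyshev-infinite-pressure} to a zero rate, applies. It yields a single stable cloud whose particles all satisfy $X_{n,k}(t)/t\to\tilde v_n$ almost surely.

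As a sanity check, I would also confirm the speed directly from the rightmost particle. In stationarity, particle $n$ moves left at rate $a_n$ when queue $n-1$ is nonempty, giving speed $-a_n\tilde\rho_{n-1}$. This should agree with $-\alpha_n/\beta_n$ by the recursion $\beta_n=(1+a_n\beta_{n-1})/b_n$.

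I expect the main obstacle to be justifying the use of the \cite{MMPW23} criterion when one rate is zero. This includes checking that every queue can still be filled and drained; it can, since $a_1>0$ and $a_n>0$. It also requires making the ``$\rho_n=1$'' boundary identification rigorous rather than formal. If the limiting argument is delicate, I would instead verify the product-form stationary distribution with the loads $\tilde\rho_k$ directly, by solving the modified traffic equations.
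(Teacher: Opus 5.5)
Your proposal is correct and follows essentially the same route as the paper. You obtain $\tilde v_n$ from the finite-system formula of \cite{MMPW23}, which applies with a zero rate by Remark~\ref{rem:malyshev-infinite-pressure}. You then note $\tilde v_n<v_n$; your identity $\tilde v_n-v_n=-1/\beta_n$ is the paper's observation that setting $b_n=0$ deletes the positive first term in the numerator of the rightmost expression in \eqref{eq:v_explicit_variant}. Since $\alpha_k,\beta_k$ for $k\le n-1$ do not involve $b_n$, this gives $\tilde\rho_k<\rho_k<1$ for $k\in[1,n-1]$. The limiting argument $b_n\to 0$ is unnecessary: that cleared-denominator expression can simply be evaluated at $b_n=0$, which is what the paper does.
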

 \begin{proof}
Since the original system is stable, $\rho_{k} = \alpha_k +v_n \beta_k$ satisfies $\rho_{k} <1$ for all $k \in [1,n-1]$. Setting $b_n =0$ does not change the $\alpha_k, \beta_k$ for $k \in [1,n-1]$, but replaces $v_n$ by $\tilde v_n \leq v_n$,
obtained from removing the first term in the numerator from the rightmost expression in~\eqref{eq:v_explicit_variant} (Corollary~2.6 in~\cite{MMPW23} applies, see Remark~\ref{rem:malyshev-infinite-pressure}). 
Hence the $\rho_k$ corresponding to the modified system also satisfy $\rho_{k} <1$ for all $k \in [1,n-1]$. So the modified system is stable and its speed is $\tilde v_n$.
\end{proof}

We finish this section with some stochastic monotonicity results.

\begin{proposition}
\label{prop:stoch-monotonicity}
    \begin{thmenumi}[label=(\roman*)]
    Consider a finite system of particles labelled by $[1,n]$.
\item    
\label{prop:stoch-monotonicity-i}
For any $k \in [1,n-1]$, there is a coupling of the system $[1,n]$
and the system $[1,k]$ with infinite pressure from the right at~$k$, such that particles $[1,k]$ start in the same relative positions and  every particle in the smaller system is no farther to the right than 
its counterpart in the original system.
\item    
\label{prop:stoch-monotonicity-ii}
For any $k \in [1,n-1]$, there is a coupling of the system $[1,n]$
and the system $[k,n]$, such that particles $[k,n]$ start in the same relative positions and  every particle in the smaller system is no farther to the right than 
its counterpart in the original system.
\end{thmenumi}
\end{proposition}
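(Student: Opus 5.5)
We will prove both parts by an explicit graphical (basic) coupling, in which all systems are driven by the same family of Poisson clocks attached to particle labels. Concretely, for each particle label $\ell$ we take independent Poisson processes $\mathcal{A}_\ell$ of rate $a_\ell$ and $\mathcal{B}_\ell$ of rate $b_\ell$. At an arrival of $\mathcal{A}_\ell$ (resp.\ $\mathcal{B}_\ell$), particle $\ell$ attempts a left (resp.\ right) step in every system in which it is present, and the step is executed unless the target site is occupied (or, for the right step of particle $k$ in the infinite-pressure system of part~\ref{prop:stoch-monotonicity-i}, it is always suppressed). Since the systems are finite, there are almost surely finitely many clock rings in bounded time and no two simultaneous rings. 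The claim therefore reduces to showing that the coordinatewise order $Y_\ell \leq X_\ell$ (smaller system $Y$, original $X$, for the common labels $\ell$) is preserved at each single ring. For the initial configuration we take $Y_\ell(0) = X_\ell(0)$ for common labels, which is allowed since only relative positions are prescribed.

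For part~\ref{prop:stoch-monotonicity-i}, with common labels $[1,k]$, we check the order at each ring, assuming $Y_\ell \leq X_\ell$ for all $\ell \in [1,k]$ just before it. A left attempt by particle $\ell$ is the only dangerous left move: it could fail in $X$ but succeed in $Y$. It fails in $X$ only if $X_{\ell-1} = X_\ell - 1$. But then $Y_\ell - 1 \leq X_\ell - 1 = X_{\ell-1}$, and if $Y_\ell = X_\ell$ we get $Y_{\ell-1} \leq X_{\ell-1} = Y_\ell - 1$. Equality here means the move is blocked in $Y$ too, while strict inequality means the move keeps $Y_\ell - 1 \leq X_\ell$. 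If instead $Y_\ell < X_\ell$, then a unit move keeps $Y_\ell \leq X_\ell$. A right attempt by particle $\ell$ is dangerous if it succeeds in $Y$ but fails in $X$ while $Y_\ell = X_\ell$. Failure in $X$ means $X_{\ell+1} = X_\ell + 1$. For $\ell < k$, we have $Y_{\ell+1} \leq X_{\ell+1} = Y_\ell + 1$, so the move is also blocked in $Y$. For $\ell = k$, the move is suppressed in $Y$ by infinite pressure. Clocks of particles $\ell > k$ move only $X$-particles and only affect $X_{k}$ indirectly through blocking, which can only hold $X$ to the right. Left moves of $X_\ell$ for $\ell \leq k$ when $Y_\ell = X_\ell$ are matched by the same left attempt in $Y$, handled as above.

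Part~\ref{prop:stoch-monotonicity-ii} is the mirror argument. Here the common labels are $[k,n]$, the smaller system has no particle $k-1$, and both systems keep the same right boundary. The only asymmetric case is a left attempt by particle $k$: it may succeed in $Y$ and fail in $X$ (blocked by particle $k-1$), and this only moves $Y$ further left, which is consistent with $Y \leq X$. The dangerous cases again reduce to neighbour-blocking and are excluded by the same inequality chase. I expect the only real care point to be this systematic case check at coinciding positions. There is no analytic obstacle, only the need to enumerate the (attempt direction, blocked in which system, $\ell$ at boundary or not) cases cleanly, together with the remark that the coupled marginals have the correct laws because each system uses exactly its own particles' clocks.
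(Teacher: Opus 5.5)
Your overall strategy is the paper's: use the basic (Harris) coupling with shared clocks and identical initial positions on the common labels, then check that $Y_\ell \le X_\ell$ survives every ring. The gap is in the left-jump case of part~(i), which you check in the wrong direction.

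A left attempt that fails in $X$ and succeeds in $Y$ only decreases $Y_\ell$. It therefore cannot break $Y_\ell \le X_\ell$, as you yourself say in part~(ii). The dangerous event is the opposite one: $Y_\ell = X_\ell$, and the attempt succeeds in $X$ but is blocked in $Y$. That would leave $X_\ell = Y_\ell - 1 < Y_\ell$. Your computation never treats this event. Your closing remark, that such moves are ``matched by the same left attempt in $Y$, handled as above'', assumes exactly what has to be shown, since nothing above covers it.

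The missing step is short, and it is the argument in the paper. If $Y_\ell$ is blocked, then $\ell \ge 2$ and $Y_{\ell-1} = Y_\ell - 1$. Hence $Y_\ell - 1 = Y_{\ell-1} \le X_{\ell-1} < X_\ell = Y_\ell$, which forces $X_{\ell-1} = X_\ell - 1$. So the move is blocked in $X$ as well.

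Part~(ii) needs the same chase for left attempts of particles $\ell > k$. For $\ell = k$ the $Y$-move is never blocked on the left, so that case is harmless. For right attempts in part~(ii), your argument covers $\ell < n$, and particle $n$ is never blocked in either system.

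With this correction the proof is complete. Your treatment of right attempts, of the suppressed right jump of particle $k$ in part~(i), and of the clocks of particles outside the common labels is fine.
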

\begin{proof}
    We prove part~\ref{prop:stoch-monotonicity-i}. The coupling
    construction uses the \emph{basic coupling} (or Harris graphical coupling, see~\cite[Ch.~II]{liggett}) for the exclusion dynamics.
     We describe the coupling between system $[1,n]$ (with particle positions $X_1 (t), \ldots, X_n (t)$) and system $[1,k]$ with infinite pressure at $k$, whose particle positions we denote $Y_1(t), \ldots, Y_{k} (t)$. We start the coupling with $X_i(0) = Y_i(0)$ for $1 \leq i \leq k$ and use the same Harris graphical construction to execute jumps. We claim the ordering $Y_i (t) \leq X_i(t)$ for all $i \in [1,k]$ and all $t \geq 0$ is preserved. 
To verify the ordering is preserved, suppose it holds immediately before  a clock rings. For the left-jump of particle~$i$, we must show that $X_i$ cannot jump left when $Y_i$ is blocked at a time when $X_i = Y_i$ (we suppress the time index for ease of reading). Indeed, if $Y_i$ is blocked then $Y_{i-1} = Y_i -1$ and since $Y_{i-1} \leq X_{i-1} < X_i = Y_i$ we must have $X_{i-1} = X_i - 1$, so the jump of $X_i$ is also blocked. A similar argument shows that for $i < k$, we cannot have that $Y_i$ jumps right when $X_i  $ is blocked, from $X_i = Y_i$. It follows that the ordering is preserved at every graphical event, demonstrating the existence of the coupling claimed in part~\ref{prop:stoch-monotonicity-i}. A similar coupling applies for the removal of the particles to the left and proves part~\ref{prop:stoch-monotonicity-ii}.
\end{proof}

\subsection{On stable clouds of infinite systems}
\label{sec:clouds-infinite}

Here we briefly discuss a general question about 
how to identify \emph{finite} stable clouds
of a semi-infinite system. 
This question in its full generality
was intentionally left out of our paper~\cite{MPW25}.
To indicate the difficulty, note that the notion of a  
 stable cloud in the semi-infinite case
 \emph{may} depend on the initial configuration,
as the following example shows.
Consider first a stable semi-infinite system that
has more than one admissible solution, with the 
corresponding speeds of these admissible solutions corresponding (say) to the interval~$[0,2]$
(to construct such an example see Example 1.17 of~\cite{MPW25}). To the left of
this semi-infinite system, add one particle with intrinsic speed equal to~$1$.
Then, when starting from a finite initial configuration,
the whole system will be stable (that is, we would have
only one stable cloud), with zero speed. However, if we start 
at a sufficiently sparse configuration 
where the semi-infinite cloud is able to achieve speed~$2$,
then it will separate from the unit speed leftmost particle,
meaning that there will be two stable clouds in the system.
 
In the context of the present paper, the phenomenon described above does not occur.
Instead, we will be typically in the situation where there are infinitely many stable clouds with strictly ordered speeds. In this case we are able to make use of  couplings between the infinite system and appropriate finite systems as described in the following lemma.
 
\begin{lemma}
\label{lem:couple_infinite_pressure}
Suppose that, for some~$k_0>0$, after putting
an infinite pressure (from the right)
to $k_0$th particle (i.e., setting $b_{k_0}:=0$,
thus ``detaching'' the first $k_0$ particles 
from the rest of the system), the resulting
finite system can be decomposed
into strictly
more than one stable clouds,
$\cC_1,\ldots,\cC_m,\cC_{m+1}$, where 
$k_0\in\cC_{m+1}$. 
Let $s_1 \leq s_2 \leq \cdots\leq s_{m+1} <0$
be the speeds of these clouds; assume now that 
there exists~$k\leq m$ such that $s_k<s_{k+1}$
(i.e., there is a transient queue separating~$\cC_k$
 from~$\cC_{k+1}$).

 In such situation, there exists a coupling
of the original queue process~$\eta$ 
with the infinite pressure queue 
process~$\tilde\eta$, such that there is a
random time~$T$ with the property that for all 
$j=1,\ldots,k$ it holds that
$\eta(t)|_{\cC_j}=\tilde\eta(t)|_{\cC_j}$
for all $t\geq T$.
\end{lemma}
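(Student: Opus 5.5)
We will use the basic (Harris graphical) coupling, as in the proof of Proposition~\ref{prop:stoch-monotonicity}: the original semi-infinite system $X$ and the detached system $\tilde X$ (particles $[1,k_0]$ with $b_{k_0}:=0$) are driven by the same Poisson clocks, and $\tilde X_i(0)=X_i(0)$ for $i\le k_0$. The monotonicity argument of Proposition~\ref{prop:stoch-monotonicity}\ref{prop:stoch-monotonicity-i} carries over unchanged to the infinite system, since it only inspects neighbours of a particle in the smaller system. It gives
\[
\tilde X_i(t)\le X_i(t)\quad\text{for all } i\le k_0,\ t\ge0 .
\]

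The key observation is that the dynamics of particles $1,\dots,\ell$ depend only on their own clocks and on whether particle $\ell+1$ blocks the right jumps of particle $\ell$. Let $\ell:=\max\cC_k$. Then the two coupled systems make identical moves on $[1,\ell]$ during any time interval on which they agree on $[1,\ell]$ and neither $X_{\ell+1}$ nor $\tilde X_{\ell+1}$ is at distance $1$ from particle $\ell$. Queue lengths $\eta_j$, $j<\ell$, are functions of the positions of particles $1,\dots,\ell$, so the claim for $\cC_1,\dots,\cC_k$ reduces to showing the following: eventually the positions on $[1,\ell]$ coincide, and after that queue $\ell$ stays nonempty in both systems.

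For the detached system we use the finite theory. By the hypothesis and the strong laws for finite stable clouds (Proposition~\ref{prop:finite-cloud} and Proposition~\ref{prop:finite-cloud_inf_pressure}), $\tilde X_{\ell}(t)/t\to s_k$ and $\tilde X_{\ell+1}(t)/t\to s_{k+1}>s_k$ almost surely. Hence $\tilde\eta_\ell(t)\to\infty$, and after some random time $\tilde\eta_\ell$ never again hits $0$; the transient queue is never emptied. Combining with the ordering, $X_{\ell+1}(t)\ge \tilde X_{\ell+1}(t)$, so $X_{\ell+1}(t)-\tilde X_\ell(t)\to\infty$ as well.

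It remains to get the actual coalescence $X_i=\tilde X_i$ on $[1,\ell]$. Before coalescence the $X$-particles are to the right of their $\tilde X$-counterparts. Consider the total discrepancy $D(t):=\sum_{i\le\ell}(X_i(t)-\tilde X_i(t))\ge0$. After the time $T_0$ beyond which particle $\ell+1$ is far from $\tilde X_\ell$, the system $\tilde X|_{[1,\ell]}$ evolves as a free finite system, and each stable cloud $\cC_j$, $j\le k$, is a positive recurrent Jackson network. The main obstacle is showing that $D$ hits $0$, and that $X_\ell$ is not itself blocked by $X_{\ell+1}$ in the meantime. For the latter: $X_{\ell+1}$ moves at speed $\ge s_{k+1}$ asymptotically, while $X_\ell\le$ the position of an $\ell$-system with infinite pressure, which has speed $\le s_k$ by comparison. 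So gaps diverge in $X$ too, and after some time both restricted systems are free $\ell$-particle systems driven by the same clocks.

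Two coupled copies of the same irreducible finite system under the basic coupling then coalesce almost surely. The discrepancy is nonincreasing; it is a standard attractiveness/second-class-particle argument, in which discrepancies annihilate when a blocking event affects one copy only. By positive recurrence of the gap processes, with probability one there occurs a configuration where both copies have their particles packed against the leftmost discrepancy, forcing a strict decrease. Iterating this finitely many times gives a random $T$ with $D(T)=0$; thereafter $\eta|_{\cC_j}=\tilde\eta|_{\cC_j}$ for $j\le k$.
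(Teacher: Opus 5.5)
Your coupling is the right one. With equal initial gaps, the common-clock particle coupling is exactly the paper's priority coupling: checking event by event gives $\eta_i(t)\ge\tilde\eta_i(t)$ for all $i\le k_0-1$, and the surplus customers behave as second-class customers. The gap is in what you then try to prove. You reduce the lemma to eventual coincidence of the positions $X_i$, $i\le\ell$. That is false in general, and the lemma deliberately asserts only that the \emph{internal} queues of $\cC_1,\dots,\cC_k$ agree.

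Under this coupling $X_1-\tilde X_1$ is nondecreasing. Left jumps of particle $1$ are never suppressed, and a right jump of $\tilde X_1$ needs $\tilde\eta_1>0$, hence $\eta_1>0$, so $X_1$ jumps too. Every surplus customer that leaves through the left end raises $X_1-\tilde X_1$ by one, and exiting is one of only two eventual fates of a surplus customer. Since $X_i-\tilde X_i=(X_1-\tilde X_1)+\sum_{j<i}(\eta_j-\tilde\eta_j)\ge X_1-\tilde X_1$, your $D$ is then at least $\ell$ forever. Already a single free particle under common clocks keeps a constant offset, so ``two coupled copies of the same finite system coalesce'' is false for positions. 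Your claim that $D$ is nonincreasing also fails: if the right clock of particle $i$ rings when $X_i=\tilde X_i$, $\tilde X_{i+1}=\tilde X_i+1$ and $X_{i+1}>X_i+1$, then $D$ goes up by one. Finally, if $s_j<s_{j+1}$ for some $j<k$, a surplus customer can sit forever in that transient separating queue. So not even the full gap vector on $[1,\ell-1]$ need coalesce, which is why the lemma is stated cloud by cloud.

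What is needed is an argument on the queues themselves, as in the paper. After the time $T_0$ from which $\tilde\eta_\ell$ never vanishes, every service at queue $\ell$ serves a first-class customer, so no surplus customer crosses queue $\ell$ after $T_0$. Also $\eta_\ell\ge\tilde\eta_\ell$ directly gives that queue $\ell$ is eventually never empty in the original system. Your speed argument for that point invokes Proposition~\ref{prop:stoch-monotonicity}\ref{prop:stoch-monotonicity-i} in the wrong direction: infinite pressure gives a lower, not an upper, bound on positions. It then remains to show that each of the finitely many surplus customers to the left of queue $\ell$ eventually leaves the internal queues of every $\cC_j$, $j\le k$. It does so either by exiting at the left end or by being trapped in a separating queue. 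This uses two facts. First, the first-class network of each cloud is positive recurrent, so it empties infinitely often, each time giving a uniformly positive chance of flushing a surplus customer out of that cloud. Second, each separating queue is either empty infinitely often or eventually never empty.
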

\begin{proof}
The stated coupling can be described in the following way:
the customers in~$\tilde \eta$ are \emph{first-class 
customers}, and
 additional customers that 
may come to the original system in comparison with 
the infinite pressure one are \emph{second-class 
customers}\footnote{That is, the second-class 
customers are those that come to the rightmost queue from the outside.}
which have lesser priority in service
(i.e., they can only be served when there are no first-class 
customers in the queue). 
For the queue process~$\tilde \eta$,
let $T_0$ be a random time (which is not Markovian)
after which the transient queues which separate 
the stable clouds never become empty.
Then the crucial observation is that the second-class customers never leave such queues after~$T_0$, and therefore they are not 
transferred to $\cC_1,\ldots,\cC_k$.
Then, those second-class customers
which were present in $\eta|_{\cC_j}$, 
$j=1,\ldots,k$ at~$T_0$ will eventually either leave
the system or get stuck in a transient queue forever.

Indeed, consider any fixed second-class customer
which is to the left of the transient queue separating~$\cC_k$
 and~$\cC_{k+1}$. 
 For each stable cloud $\cC_j$, the first-class Jackson network on its finitely many internal queues is irreducible positive recurrent, so it returns to the all-zero state infinitely often a.s. At each such return there is a strictly positive probability that a given second-class customer present in that cloud is served through a prescribed finite sequence of transitions and exits the cloud before the next first-class arrival. 
 Since there are only finitely many second-class customers to the left of the transient queue at time $T_0$, and no new second-class customers can enter this region after $T_0$, the preceding probabilities may be bounded below uniformly by a positive constant. 
 Consider now any separating queue lying to the left of the transient
queue between $\cC_k$ and $\cC_{k+1}$.  Pathwise, either this queue is empty
infinitely often, or there is a finite random time after which it is
never empty.  In the latter case, any second-class customer which reaches
this queue is trapped there forever.  In the former case, at each time
at which the first-class queue is empty, a second-class customer present
there has a probability bounded below by a strictly positive constant of
being served before the next first-class arrival.  Hence, on the event
that the first-class queue is empty infinitely often, every second-class
customer present there is eventually served from the queue almost surely.

Thus a tagged second-class customer cannot remain forever in any stable
cloud, and at each separating queue it is either eventually passed on or
becomes trapped there forever.  Since there are only finitely many stable
clouds and separating queues to the left of the transient queue between
$\cC_k$ and $\cC_{k+1}$, every second-class customer present to the left of
this separator at time $T_0$ is eventually either removed from the
system to the left or trapped forever in a separating queue.  Since
there are only finitely many such second-class customers, there is a
finite random time $T$ after which no second-class customer is present
in any of the internal queues of $\cC_1,\ldots,\cC_k$. 
\end{proof}

\begin{corollary}
\label{cor:inf_finite_queues}
Assume that there are two sequences of positive 
integers $k_1<k_2<k_3<\cdots$ and 
$k'_1<k'_2<k'_3<\cdots$ also with the property
that $k'_j<k_j$ for all~$j$, such that, when 
putting infinite pressure at~$k_j$, the $k'_j$th 
queue is transient. Then the system is decomposed
into infinitely many finite clouds with non-decreasing speeds, and there is an infinite 
number of transient queues.
\end{corollary}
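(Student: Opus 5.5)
We will apply Lemma~\ref{lem:couple_infinite_pressure} once for each $j$ and then glue the resulting finite pieces together. Fix $j$ and impose infinite pressure at $k_j$. This turns $[1,k_j]$ into a finite system. By the finite-system theory of~\cite{MMPW23} (via Remark~\ref{rem:malyshev-infinite-pressure}), this system splits into finitely many stable clouds with non-decreasing speeds, which are separated by transient queues. By hypothesis, queue $k'_j$ is one of these separators. So the lemma applies with the $k$ of the lemma chosen so that $\cC_1,\ldots,\cC_k$ are the clouds of the pressured system lying in $[1,k'_j]$. It gives a coupling and a finite random time $T_j$ after which the queue process $\eta$ of the infinite system agrees with the pressured process $\tilde\eta^{(j)}$ on the internal queues of these clouds.

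We will also ask the coupling to keep track of queue $k'_j$ itself. In $\tilde\eta^{(j)}$ this queue is transient, so it tends to infinity almost surely. In the second-class construction, second-class customers only increase queue lengths, so $\eta_{k'_j}\ge\tilde\eta^{(j)}_{k'_j}\to\infty$. Hence the gap between particles $k'_j$ and $k'_j+1$ diverges in the infinite system as well. Together with the agreement on internal queues after $T_j$, this means that the cloud decomposition of $[1,k'_j]$ in the infinite system coincides with the cloud decomposition of $[1,k'_j]$ in the pressured system.

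Stability of each such cloud in the sense of Definition~\ref{def:stable-cloud-finite} then follows. For (i), the internal queues eventually coincide with a positive recurrent finite Jackson network, so they are tight. For (ii), the boundary queues diverge in probability: they are either transient separators or queue $k'_j$, and divergence transfers through the same monotone comparison. The speeds are inherited as well. After $T_j$ the leftmost particle of each cloud moves as in the finite pressured system, which has the strong law of Proposition~\ref{prop:finite-cloud}/Proposition~\ref{prop:finite-cloud_inf_pressure} with the ordered speeds of~\cite{MMPW23}. The infinite system therefore inherits the non-decreasing speeds.

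Next comes consistency across $j$. For $j<j'$ the decompositions of $[1,k'_j]$ obtained from pressure at $k_j$ and at $k_{j'}$ must agree, since both are equal to the decomposition of the infinite system. This decomposition is an intrinsic pathwise notion (tight internal gaps, diverging boundary gaps), so it does not depend on which coupling was used. The clouds found at stage $j'$ therefore refine-extend those found at stage $j$. Since $k'_j\to\infty$, every particle eventually lies in some finite cloud, and we obtain a decomposition $\N=\cC_1\cup\cC_2\cup\cdots$ into finite stable clouds. There are infinitely many clouds because each cloud is finite while $\N$ is infinite. Equivalently, the queues $k'_j$ with $j\in\N$ give infinitely many distinct transient queues in $\eta$.

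The main obstacle is the transfer of transience of queue $k'_j$ from $\tilde\eta^{(j)}$ to $\eta$ and the stability statement (ii) in the infinite system. To handle it, I would rely on the fact that the second-class coupling gives $\eta\ge\tilde\eta^{(j)}$ componentwise. This holds because second-class customers only add to queue contents, and first-class dynamics are unaffected by them. That single domination supplies the divergence of every separator, and tightness comes from the lemma's agreement after $T_j$.
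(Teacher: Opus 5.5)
Your proof is correct, but it organizes the argument differently from the paper.

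\textbf{The paper's route.} It first couples two infinite-pressure truncations at $k<\hat k$, treating the extra customers from $(k,\hat k]$ as second class. This shows that a queue transient at truncation $k$ stays transient at every larger truncation. Hence a single truncation at $k_J$ already has $k'_1,\dots,k'_r$ all transient. One application of Lemma~\ref{lem:couple_infinite_pressure} at the separator $k'_r$ then exhibits at least $r$ stable clouds of the infinite system in $[1,k'_r]$.

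\textbf{Your route.} You skip the truncation-to-truncation comparison. You apply the lemma once for each $j$, using only the given separator $k'_j$, and glue the stages via uniqueness of the stable clouds of the infinite system. Infinitely many clouds then follows simply because finite clouds cover $\N$. This is leaner, and the domination $\eta\ge\tilde\eta^{(j)}$ does the same work as in the paper for transience of $k'_j$ and divergence of separators.

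\textbf{What the gluing step needs.} It deserves an explicit justification rather than the phrase ``intrinsic pathwise notion''. Being a stable cloud is a distributional property, determined by the quenched law of the infinite system, so it is independent of any coupling. You also need that two distinct stable clouds $[i,j]$ and $[i',j']$ cannot intersect. If they did, some outer gap of one cloud would lie between particles of the other. That gap diverges in probability, yet it would be bounded by the tight spread between the other cloud's extreme particles, a contradiction. The paper uses the same fact tacitly when it passes from ``for each $r$'' to a single decomposition of $\N$.

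\textbf{A minor imprecision.} The boundary queues in (ii) need not all be transient separators or $k'_j$. Adjacent clouds of a truncation with equal speeds are separated by gaps that diverge only in probability. Your domination transfers this equally well, which is how the paper phrases it (``the separating gaps still diverge in probability'').
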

\begin{proof}
Couple two infinite-pressure truncations at $k<\hat k$ by regarding the additional customers generated by the particles in $(k,\hat k]$ as second-class customers. Starting from the same queues on $[1,k-1]$, this gives $\eta_i^{(k)}(t)\le \eta_i^{(\hat k)}(t)$ (superscript indicating the truncation), for all $i<k$ and all $t\ge0$.
Hence any queue which is transient in the truncation at $k$ remains transient after moving the truncation point to the right.

Fix $r$, and choose $J$ sufficiently large. Then the truncation at $k_J$ has the transient queues $k'_1,\ldots,k'_r$, and hence at least $r+1$ stable clouds separated by these queues. Apply  Lemma~\ref{lem:couple_infinite_pressure} using the transient separator $k'_r$. The internal queue processes of all clouds lying to its left eventually coincide with those in the infinite system. Moreover, under the same coupling the separating queues in the infinite system dominate their finite-system counterparts, so the separating gaps still diverge in probability. Thus these are stable clouds of the infinite system, with the same, non-decreasing speeds as in the finite truncation. Since $k'_r\to\infty$, every particle belongs to such a finite cloud. Finally, the domination also shows that every $k'_r$ remains transient in the infinite system. Hence there are infinitely many finite clouds and infinitely many transient queues.
\end{proof}

\section{Finding stable clouds with the potential}
\label{sec:potentials}

In this section we define the potential and 
argue that it is a convenient tool for identifying
stable clouds and their speeds.
The \emph{potential function} (or simply \emph{potential})
$V:
\ZP \to \R$ is defined by 
\begin{equation}
\label{df_potential}
V(0):=0, \text{ and } V(n) := \sum_{j=1}^n \log\frac{a_j}{b_j} \text{ for } n\in\N.
\end{equation}
The potential is used extensively 
in the context of 
one-dimensional RWRE: see 
e.g.\ the definition of~$w^{(n)}(t)$ on p.~257 of~\cite{sinai82}, 
or the first display in Section~2 of~\cite{FGP10}.
In the context of RWRE the idea is that one frequently 
represents various quantities associated to the behaviour of the random walk  in terms 
of the potential, so that an analysis of the behaviour of the potential
leads to conclusions about the behaviour of the random walk (in a particular
environment). In our context, something similar is true, as we shall explore in this section.

We point out that the choice of the initial condition $V(0)=0$
is rather arbitrary (although, of course, natural).
In the following, when discussing intrinsic stability
of some finite groups of particles,
we will frequently consider finite pieces of the potential
that do not necessarily start at zero; therefore, we 
typically present formulas in terms of the potential \emph{differences}
so that they can be applied whether or not the leftmost particle we are considering at the time is the leftmost particle overall.

We will frequently need to consider finite groups
of consecutive particles, and answer questions
like if that group would be intrinsically stable
(meaning it would be stable if  all the other particles were removed from 
the system). 
First, let us note that the formulas~\eqref{eq:alpha-k-def}--\eqref{eq:beta-k-def} can be rewritten 
in the following way in terms of the potential:
\begin{align}
 \alpha_k & := \re^{V(k)-V(0)} \text{ for }k\in\ZP,
 \text{ and } \beta_k := \sum_{\ell=1}^k a_\ell^{-1} \re^{V(k)-V(\ell-1)} \text{ for } k \in \N.
\label{alpha_to_V}
\end{align}
Then, consider a group $[i_0+1,\ldots,i_0+n]$
together with the potential~$V$ on 
the interval~$[i_0,\ldots,i_0+n]$
and observe that
the formulas~(2.16) and~(2.17) of~\cite{MMPW23}
can be rewritten as
\begin{align}
\label{V_rho_explicit}
 \rho_k &:= \re^{V(k)-V(i_0)}
  + \hv \sum_{\ell=i_0+1}^{k} a_\ell^{-1} \re^{V(k)-V(\ell-1)} , \text{ for } k \in [i_0+1,i_0+n] , \text{ where}\\
 \hv & :=\frac{1-\re^{V(i_0+n)-V(i_0)}}
{\sum_{\ell=i_0+1}^{i_0+n} a_\ell^{-1} \re^{V(i_0+n) - V(\ell-1)}}.
 \label{V_v_explicit}
\end{align}

\begin{definition}[Intrinsic stability]
    \label{def:intrinsic-stability}
    We say the group $[i_0+1,\ldots,i_0+n]$ of $n$ particles is \emph{intrinsically stable} if $\rho_k < 1$ for all $k \in [i_0+1,\ldots,i_0+n-1]$, where $\rho_k$ is given by~\eqref{V_rho_explicit}. If the group is intrinsically stable, we say its \emph{intrinsic speed} is $\hv$ given by~\eqref{V_v_explicit}.
\end{definition}
\begin{remarks}
\label{rems:intrinsic-stability}
    \begin{myenumi}[label=(\roman*)]
\item\label{rems:intrinsic-stability-i}
If the group $[i_0+1,\ldots,i_0+n]$ is intrinsically stable,
then when considered as an isolated $n$-particle system it forms a single
stable cloud (in the sense of Definition~\ref{def:stable-cloud-finite}) and has speed $\hv$ given by~\eqref{V_v_explicit} (cf.~Proposition~\ref{prop:finite-cloud}).
\item \label{rems:intrinsic-stability-ii}
If the group $[i_0+1,\ldots,i_0+n]$ is intrinsically stable,
then so is the same group with infinite pressure from the right, i.e.,
when we set $b_{i_0+n} = 0$. This follows in the same way as for isolated systems as in Proposition~\ref{prop:finite-cloud_inf_pressure}. 
\item \label{rems:intrinsic-stability-iii}
If the group $[i_0+1,\ldots,i_0+n]$ is intrinsically stable,
then, due to an elementary coupling argument, it is contained in a single stable cloud
in the system $[1,m]$ (with or without infinite pressure at $m$), 
where $m\geq i_0+n$.
\end{myenumi}
\end{remarks}

An immediate observation is that, in the case when
the group $[i_0+1,\ldots,i_0+n]$ is intrinsically stable,   
the speed~$\hv$ is positive
(respectively, negative, zero) if $V(i_0+n)<V(i_0)$
(respectively, $V(i_0+n)>V(i_0)$, 
$V(i_0+n)=V(i_0)$). So, we will frequently study 
finite pieces of the potential in order to draw 
conclusions about the intrinsic stability and speed 
of the corresponding group of particles.
Also, 
in this section we will typically assume~$i_0=0$,
since this obviously does not restrict generality.

Having motivated the relevance of the potential,
the main goal of this section is to understand  how the potential~$V$
of stable (and unstable) clouds should behave.
As an initial easy observation, note that if~$V$ looks 
like in Figure~\ref{f_stable0}, then we have a stable cloud.
\begin{figure}
\begin{center}
\includegraphics{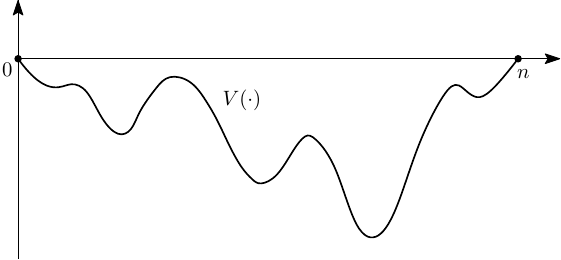}
\caption{An example of a potential profile 
of an intrinsically stable, in the sense of Definition~\ref{def:intrinsic-stability}, system $[1,n]$ (see Lemma~\ref{l_same_hights_stable}): $V(0)=V(n)=0$, and 
$V(k)<0$ for $k \in [1,n-1]$. In this and subsequent illustrations of the potential, we often take the licence to represent the potential by a smooth curve.}
\label{f_stable0}
\end{center}
\end{figure}
More precisely, the following fact holds:
\begin{lemma}
\label{l_same_hights_stable}
Assume that $V(0)=V(n)=0$; then, the group~$[1,n]$
is intrinsically stable (and has zero speed) if and only if
$V(k)<0$ for all $k \in [1,n-1]$.
\end{lemma}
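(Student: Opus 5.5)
Both directions follow by direct substitution into the formulas~\eqref{V_rho_explicit}--\eqref{V_v_explicit} with $i_0=0$. Since $V(n)=V(0)=0$, the numerator of~\eqref{V_v_explicit} is $1-\re^{0}=0$. The denominator is a finite sum of strictly positive terms, so $\hv=0$. This already gives the parenthetical claim: if the group is intrinsically stable, its speed is zero.

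With $\hv=0$, the quantities in~\eqref{V_rho_explicit} reduce to $\rho_k=\re^{V(k)-V(0)}=\re^{V(k)}$ for $k\in[1,n-1]$. By Definition~\ref{def:intrinsic-stability}, intrinsic stability means $\rho_k<1$ for all $k\in[1,n-1]$. Since $x\mapsto\re^x$ is strictly increasing with $\re^0=1$, the condition $\re^{V(k)}<1$ is equivalent to $V(k)<0$. Hence the group is intrinsically stable if and only if $V(k)<0$ for all $k\in[1,n-1]$, which gives both implications at once.

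I do not expect a real obstacle. The only points to check are these:
\begin{enumerate}
\item The range of $k$ in Definition~\ref{def:intrinsic-stability} is $[1,n-1]$, and it excludes $k=n$, where $\rho_n=1$ trivially.
\item The translation~\eqref{V_rho_explicit} from the $\rho_k=\alpha_k+v_n\beta_k$ of~\cite{MMPW23} is valid, which was recorded before the statement via~\eqref{alpha_to_V}.
\end{enumerate}
The degenerate case $n=1$ is vacuous, since there are no constraints, and it is consistent with the statement.
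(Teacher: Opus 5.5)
Your proposal is correct and follows essentially the same route as the paper's proof. Since $V(n)=V(0)$, the formula for $\hv$ gives $\hv=0$, so $\rho_k=\re^{V(k)}$, and the stability condition $\rho_k<1$ for $k\in[1,n-1]$ is exactly $V(k)<0$.
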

\begin{proof}
Indeed, \eqref{V_v_explicit} implies that~$\hv=0$ so, 
by Definition~\ref{def:intrinsic-stability} and~\eqref{V_rho_explicit}, we 
have that the cloud is stable iff~$\rho_k=\re^{V(k)}<1$ for all $k \in [1,n-1]$,
which means $V(k)<0$ for all $k \in [1,n-1]$.
\end{proof} 

The above is a very basic and also somewhat 
``rare'' example 
(it is not easy to be so lucky to have~$V(n)$ exactly equal to~$0$);
the next situation is more general:
\begin{lemma}
\label{l_stable_almost0}
Assume that $V(k) \leq 0 \leq V(n)$ and $V(k) < V(n)$ for all $k\in [1, n-1]$. Then $[1, n]$
is intrinsically stable.
\end{lemma}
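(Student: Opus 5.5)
The plan is a direct computation from Definition~\ref{def:intrinsic-stability}: take $i_0=0$, so $V(0)=0$, and check that $\rho_k<1$ for every $k\in[1,n-1]$ using the explicit formulas \eqref{V_rho_explicit}--\eqref{V_v_explicit}.

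First I will determine the sign of the intrinsic speed. By \eqref{V_v_explicit},
\[
\hv=\frac{1-\re^{V(n)}}{\sum_{\ell=1}^{n} a_\ell^{-1}\re^{V(n)-V(\ell-1)}} .
\]
The denominator is strictly positive, since $a_\ell\in(0,\infty)$. The hypothesis $V(n)\geq 0$ gives $\hv\leq 0$. Moreover, $\hv<0$ exactly when $V(n)>0$.

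Next, for $k\in[1,n-1]$, formula \eqref{V_rho_explicit} reads
\[
\rho_k=\re^{V(k)}+\hv\sum_{\ell=1}^{k}a_\ell^{-1}\re^{V(k)-V(\ell-1)} .
\]
The sum is strictly positive and $\hv\leq 0$, so $\rho_k\leq \re^{V(k)}\leq 1$, using $V(k)\leq 0$. It remains to show the inequality is strict, and I split into two cases. If $V(k)<0$, then $\re^{V(k)}<1$ and we are done. If $V(k)=0$, then the hypothesis $V(k)<V(n)$ forces $V(n)>0$, so $\hv<0$. The correction term is then strictly negative, and $\rho_k<\re^{V(k)}=1$.

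Hence $\rho_k<1$ for all $k\in[1,n-1]$, which is intrinsic stability in the sense of Definition~\ref{def:intrinsic-stability}. The only delicate point is the boundary case $V(k)=0$: there the strict inequality $V(k)<V(n)$ is exactly what is needed to make $\hv$ strictly negative. Without it we would be in the degenerate situation of Lemma~\ref{l_same_hights_stable}, where $\rho_k=1$ can occur.
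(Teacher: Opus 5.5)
Your proof is correct, but it takes a different route from the paper.

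\textbf{The paper's route.} The paper argues by contradiction. If $[1,n]$ were not intrinsically stable, the finite-system theory of \cite{MMPW23} (Theorem~2.1) would decompose it into at least two stable clouds. The leftmost cloud $[1,j]$ has $V(j)\le 0=V(0)$, so by \eqref{V_v_explicit} its speed is non-negative. The rightmost cloud $[i,n]$ has $i\ge 2$ and $V(n)>V(i-1)$, so its speed is strictly negative. This contradicts the fact that cloud speeds are non-decreasing from left to right.

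\textbf{Your route.} You verify Definition~\ref{def:intrinsic-stability} directly. The sign of $\hv$ gives $\rho_k\le \re^{V(k)}\le 1$. The strict hypothesis $V(k)<V(n)$ then handles the boundary case $V(k)=0$ by forcing $\hv<0$.

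\textbf{Comparison.} Your argument is more elementary and self-contained, since it uses only the explicit formulas. It does not need the existence of the cloud decomposition, the per-cloud speed formula, or the monotonicity of cloud speeds. Nor does it need the converse fact that failure of $\rho_k<1$ forces more than one cloud. It also gives the quantitative bound $\rho_k\le\re^{V(k)}$ as a by-product. The paper's argument is structural rather than computational. It is the template for later proofs, notably Theorem~\ref{t_stable_h_k_k+1}\ref{t_stable_h_k_k+1-ii}, where a direct check of $\rho_k<1$ would be unwieldy. There one instead compares the speeds of a putative leftmost and rightmost cloud.
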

\begin{proof}
Suppose that $[1,n]$ is \emph{not} intrinsically stable;
consider its decomposition into stable clouds.
Let~$v$ be the speed of the leftmost stable cloud and~$v^*$
be the speed of the rightmost one. Then, \eqref{V_v_explicit} 
would imply that $v\geq 0$ and $v^*<0$; this is a contradiction,
since cloud speeds are non-decreasing from left to right (see~\cite[Theorem 2.1]{MMPW23}).
\end{proof}

It is important to note that we can
use Lemma~\ref{l_stable_almost0}
to identify groups
that should be parts of stable clouds;
see Figure~\ref{f_stable1}. Also note that 
this procedure finds clouds 
that have non-positive intrinsic speeds
(one can also
formulate an analogous result for finding 
clouds with non-negative intrinsic speeds, but the present form
of the lemma is better tailored for the needs of this 
paper).
\begin{figure}
\begin{center}
\includegraphics{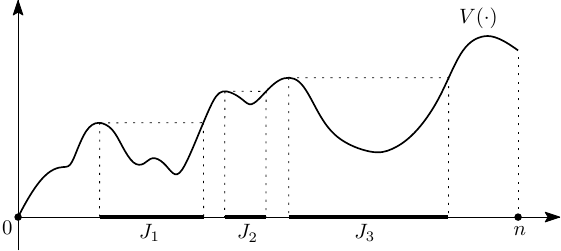}
\caption{Finding stable clouds: for each $j=1,2,3$,
the particles from~$J_j$ form an intrinsically stable group. We do \emph{not} claim that, in the infinite system, e.g., $J_1$ and~$J_2$
should be in \emph{different} stable clouds in the sense of Definition~\ref{def:stable-cloud-finite}.}
\label{f_stable1}
\end{center}
\end{figure}

The following result goes in the converse direction: it
 shows that for stable clouds
  the global maximum for the potential must be attained 
  at (one or both of) the extremities.
\begin{lemma}
\label{l_global_maxima}
 Assume that group $[1,n]$ is intrinsically stable, and let
\[
 M = \bigl\{k\in \{0,\ldots,n\}:
 V(k) = \max_{0 \leq j \leq n}V(j)\bigr\}
\]
be the set of the global maxima of the potential
on the interval $[0,n]$.
Then $M\subseteq \{0,n\}$.
\end{lemma}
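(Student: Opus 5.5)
The plan is to work directly with the explicit stability coefficients $\rho_k$ of Definition~\ref{def:intrinsic-stability}, taking $i_0=0$ so that $V(0)=0$. I would suppose, for contradiction, that some $k^*\in[1,n-1]$ belongs to $M$, so that $V(k^*)\geq V(0)$ and $V(k^*)\geq V(n)$. I would then show that $\rho_{k^*}\geq 1$, which contradicts intrinsic stability. The argument splits according to the sign of the intrinsic speed $\hv$ from~\eqref{V_v_explicit}.

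\emph{Case $\hv\geq 0$.} Every summand in~\eqref{V_rho_explicit} is then non-negative, so $\rho_{k^*}\geq \re^{V(k^*)-V(0)}\geq 1$.

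\emph{Case $\hv<0$.} The representation~\eqref{V_rho_explicit} is anchored at the left endpoint, so I would first rewrite $\rho_k$ in a form anchored at the right endpoint. Write $\beta_k=\sum_{\ell=1}^k a_\ell^{-1}\re^{V(k)-V(\ell-1)}$, and define
\[
R_k:=\sum_{\ell=k+1}^{n} a_\ell^{-1}\re^{V(n)-V(\ell-1)}>0 .
\]
Splitting the sum defining $\beta_n$ at $k$ gives $\beta_n=\re^{V(n)-V(k)}\beta_k+R_k$. By~\eqref{V_v_explicit}, $\hv\beta_n=1-\re^{V(n)}$. Multiplying $\rho_k=\re^{V(k)}+\hv\beta_k$ by $\re^{V(n)-V(k)}$ therefore yields
\[
\re^{V(n)-V(k)}\rho_k=\re^{V(n)}+\hv\bigl(\beta_n-R_k\bigr)=1-\hv R_k ,
\]
that is,
\[
\rho_k=\re^{V(k)-V(n)}\bigl(1-\hv R_k\bigr).
\]
With $\hv<0$ and $R_{k^*}>0$ the bracket exceeds $1$. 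Since $V(k^*)\geq V(n)$, we get $\rho_{k^*}>1$.

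In both cases the inequality $\rho_{k^*}<1$ required by Definition~\ref{def:intrinsic-stability} fails, so $M\cap[1,n-1]=\emptyset$, i.e.\ $M\subseteq\{0,n\}$. The only real obstacle is the case $\hv<0$, where the left-anchored formula gives no lower bound. The right-anchored identity above resolves it, and it amounts to exploiting the left–right symmetry of the system. The identity itself is a routine algebraic check, which I would verify carefully, including the endpoint conventions in the sums.
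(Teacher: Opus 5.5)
Your proof is correct and is essentially the paper's: the case $\hv\geq 0$ (equivalently $V(0)\geq V(n)$) is handled identically via $\rho_k\geq \re^{V(k)-V(0)}$, and the paper disposes of the other case by mirroring the system, i.e.\ replacing the rates by $(b_n,a_n),\ldots,(b_1,a_1)$. Your identity $\rho_k=\re^{V(k)-V(n)}\bigl(1-\hv R_k\bigr)$ is exactly the mirrored system's coefficient at queue $n-k$ (whose speed is $-\hv$), so it gives an explicit, self-contained check of the symmetry that the paper only asserts in a footnote.
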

\begin{proof}
 Note that we can assume without restricting 
generality\footnote{This 
is because one can ``mirror'' the system
by setting the transition rates to
$(b_n,a_n),\ldots,(b_1,a_1)$; then the potential
will also be ``mirrored''.} that $V(0)\geq V(n)$.
Then $\hv\geq 0$ by~\eqref{V_v_explicit}, so~\eqref{V_rho_explicit}
gives $\rho_k\geq \re^{V(k)-V(0)}$, and the stability condition
$\rho_k<1$ implies $V(k)<V(0)$ for all $k\in[1,n-1]$. 
Since also $V(n)\leq V(0)$, we get $M\subseteq\{0,n\}$.
\end{proof}

As discussed in Section~\ref{sec:finite-systems},
it is a simple observation that, if we have a stable 
finite group~$[1,n]$, then setting~$b_n$ to zero 
preserves the stability (and makes the speed smaller);
recall that we refer to this operation as 
putting \emph{infinite pressure} to the right extreme
of the group.
As before,
we will typically use the notation~$\tv$
for denoting the speed with infinite pressure from 
the right.
This is also useful for the 
original semi-infinite system: 
if we have a good estimate on~$\tv$, then we know that,
whatever the particles
on the right do, they cannot increase the
magnitude of the speed
of our cloud beyond~$|\tv|$.
Let us rewrite the formula for the speed
under infinite pressure (see Proposition~\ref{prop:finite-cloud_inf_pressure})
in terms of the potential, with the help
of~\eqref{alpha_to_V}:
\begin{equation}
\label{speed_infinite_pressure}
 \tv = - \frac{\re^{V(n)-V(0)}}
{\frac{1}{a_{n}}\re^{V(n)-V(n-1)}
  +\cdots + \frac{1}{a_1}\re^{V(n)-V(0)}}
\end{equation}
(note that $\re^{V(n)}$ will cancel, so, in fact, the above 
expression does not depend on~$b_n$).

Now, we define one of the central concepts of this paper.
For $[n_1,n_2]\subset \Z_+$, let the \emph{depth}
of the potential on this subinterval be defined by
\begin{equation}
\label{df_depth}
 \cD(n_1,n_2)
  = \max_{n_1\leq k\leq n_2}
   \max_{\substack{n_1\leq \ell_1\leq k\\
        k\leq \ell_2\leq n_2}}
        \big((V(\ell_1)-V(k))\wedge(V(\ell_2)-V(k))\big).
\end{equation}
In words, the depth is the ``maximal height to overcome''
when going out of the interval, maximized over all starting points: 
see Figure~\ref{f_depth}.
\begin{figure}
\begin{center}
\includegraphics{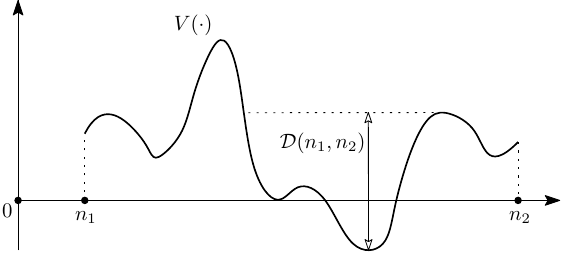}
\caption{Illustration of the definition of the depth at~\eqref{df_depth}.}
\label{f_depth}
\end{center}
\end{figure} 
Also note that~\eqref{df_depth} takes  the convenient form 
\begin{equation}
    \label{eq:D-when-max} 
    \cD(n_1,n_2)=\max_{n_1\leq k \leq n_2}
 \Bigl(\max_{n_1\leq \ell \leq k}V(\ell)-V(k) \Bigr)
 \text{ whenever } V(n_2)=\max_{n_1\leq j\leq n_2}V(j),
\end{equation}
i.e., 
the depth is equal to the maximal descent from 
the running maximum when the maximum potential is attained at the end of the interval. 

A very important observation (which will be made 
rigorous in Lemma~\ref{l_depth_speed} below), is that regions of 
significant depth, or \emph{potential wells} (such as the one below the horizontal dotted line 
in Figure~\ref{f_depth}) tend to slow down the 
corresponding particles due to what we call \emph{jamming}: the particles
in the left part of the well 
tend to want  
to go to the right, while the particles
in the right part of the well tend to want  
to go to the left, which results in a ``traffic jam''.
For this reason, we also refer to such regions 
as \emph{traps}; the role of traps is analogous to their role in localization for RWRE.

The next result says that the speed of a stable cloud is
roughly inversely proportional to the exponential 
of the depth of the associated potential well.

\begin{lemma}
\label{l_depth_speed}
Assume that $\eps_0\leq a_k \leq \eps_0^{-1}$ and 
$\eps_0\leq b_k \leq \eps_0^{-1}$ for some $\eps_0>0$,
and all $k \in [1,n]$ (i.e., the same assumption 
as in~\eqref{ass:ue}). 
Suppose that group $[1,n]$ is intrinsically stable, and 
assume also that
 $V(n)=\max_{0 \leq j \leq n} V(j)$,
which implies that the intrinsic speed satisfies $\hv \leq 0$.
Denote by~$\tv<0$ the speed of group $[1,n]$ 
with the infinite pressure from the right\footnote{Recall from Remark~\ref{rems:intrinsic-stability}\ref{rems:intrinsic-stability-ii}
that the system with infinite pressure is also stable.}
(note that $|\tv| > |\hv|$ by assumption that all $b_k>0$).
Then with $\cD$ defined at~\eqref{df_depth} we have
\begin{equation}
\label{depth_speed_upper}
|\tv|\leq \eps_0^{-1} \re^{-\cD(0,n)},
\end{equation}
and
\begin{equation}
\label{depth_speed_lower}
|\hv|\geq \eps_0n^{-1} (1-\re^{-(V(n)-V(0))})\re^{-\cD(0,n)}.
\end{equation}
\end{lemma}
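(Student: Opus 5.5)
The lower bound \eqref{depth_speed_lower} is an explicit computation from \eqref{V_v_explicit}. The upper bound \eqref{depth_speed_upper} needs a comparison argument.

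\emph{Lower bound.} Take $i_0=0$ and write $\Delta:=V(n)-V(0)\ge 0$. Dividing numerator and denominator of \eqref{V_v_explicit} by $\re^{\Delta}$ gives
\[
|\hv| = \frac{1-\re^{-\Delta}}{\sum_{\ell=1}^n a_\ell^{-1}\re^{V(0)-V(\ell-1)}} .
\]
For each $\ell$ we have $V(0)\le \max_{0\le j\le \ell-1}V(j)$. Since $V(n)$ is the maximum on $[0,n]$, the representation \eqref{eq:D-when-max} applies and yields $V(0)-V(\ell-1)\le \cD(0,n)$. Together with $a_\ell^{-1}\le \eps_0^{-1}$, the denominator is at most $n\eps_0^{-1}\re^{\cD(0,n)}$, which is exactly \eqref{depth_speed_lower}.

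\emph{Upper bound: single-term estimate.} By \eqref{speed_infinite_pressure},
\[
|\tv| = \Bigl(\sum_{\ell=1}^n a_\ell^{-1}\re^{V(0)-V(\ell-1)}\Bigr)^{-1} \le \eps_0^{-1}\re^{V(j)-V(0)}
\]
for every $j\in[0,n-1]$. Choose the pair $(\ell_1,k)$ with $\ell_1\le k$ that realises $\cD(0,n)=V(\ell_1)-V(k)$ in \eqref{eq:D-when-max}. If $\ell_1=0$, taking $j=k$ gives \eqref{depth_speed_upper} at once. In general, however, $V(\ell_1)>V(0)$ is possible, because Lemma~\ref{l_global_maxima} only prevents interior points from exceeding $V(n)$. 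So for $\ell_1>0$ I will compare with a smaller system.

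\emph{Upper bound: comparison with a truncated system.} By Proposition~\ref{prop:stoch-monotonicity}\ref{prop:stoch-monotonicity-ii}, applied to the pressured systems (the coupling proof is unchanged when $b_n=0$), removing particles $[1,\ell_1]$ leaves each remaining particle no farther right. Hence the asymptotic speed $u$ of particle $n$ in the pressured system $[\ell_1+1,n]$ satisfies $u\le \tv<0$, so $|\tv|\le |u|$. Decompose that system into stable clouds and let $[m+1,n]$, with $m\ge \ell_1$, be the cloud containing particle $n$. It is intrinsically stable with infinite pressure, so applying \eqref{speed_infinite_pressure} with $i_0=m$ gives
\[
|u|\le \eps_0^{-1}\re^{V(j)-V(m)} \quad\text{for all } j\in[m,n-1].
\]

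\emph{Main obstacle.} I need to show that $m$ can be taken so that $V(m)\ge V(\ell_1)$ and $m\le k$; then choosing $j=k$ gives $|u|\le \eps_0^{-1}\re^{-\cD(0,n)}$. The first candidate is $m=\ell_1$ itself. Consider the potential restricted to $[\ell_1,n]$. It is maximised at $n$, while every point in $[\ell_1,n]$ lies at or below $V(\ell_1)$ on the descent to $k$. Lemma~\ref{l_global_maxima}, combined with the fact from \cite[Theorem~2.1]{MMPW23} that cloud speeds are non-decreasing from left to right, should show that a left boundary $m$ of the rightmost cloud with $m\in(\ell_1,k]$ and $V(m)<V(\ell_1)$ is impossible. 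If the cloud boundary nevertheless falls beyond $k$, I would instead move the cut point $\ell_1$ (to the running maximum before the deepest descent within that cloud) and repeat the argument. Verifying this step carefully is where I expect the real work to lie.
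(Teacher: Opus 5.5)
Your lower bound \eqref{depth_speed_lower} is correct and is essentially the paper's computation. For the upper bound, the single-term estimate settles the case $V(\ell_1)=V(0)$, and the left-deletion coupling giving $|\tv|\le|u|$ is fine. The gap is the step you flag, and the claim you aim for there is false in general, not merely unverified.

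Suppose the rightmost cloud $[m+1,n]$ of the pressured system $[\ell_1+1,n]$ has $m>\ell_1$. Then its leftmost cloud $[\ell_1+1,j]$ is not the rightmost one, so it moves at its unpressured intrinsic speed, and that speed is at most $u<0$. This forces $V(j)>V(\ell_1)$, hence $j>k$, and so $m\ge j>k$. Thus ``$m\le k$ and $V(m)\ge V(\ell_1)$'' holds only if the pressured system $[\ell_1+1,n]$ is a single cloud. The hypotheses assume stability of $[1,n]$ only, and do not force this.

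For example, write $D:=\cD(0,n)$ and let $V$ do the following:
\begin{itemize}
\item drop into an extremely wide well of depth less than $D$;
\item climb to $V(\ell_1)>V(0)$;
\item drop into a narrow well of depth $D$;
\item climb higher;
\item drop into a shallower well of depth $D'<D$ with an enormous flat bottom;
\item finally climb to $V(n)$.
\end{itemize}
If the first well dominates the mass $\sum_i \re^{V(0)-V(i-1)}$, then $\rho_k<1$ on $[1,n-1]$, so $[1,n]$ is stable. But in $[\ell_1+1,n]$ the shallow wide well carries most of the mass. The traffic intensity at the top of the deep well then exceeds $1$, and the rightmost cloud is the shallow well. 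Its own formula \eqref{speed_infinite_pressure}, estimated by any single term, gives only $\eps_0^{-1}\re^{-\cD(m,n)}$ with $\cD(m,n)\le D'<D$. Moving the cut point inside that cloud, as in your fallback, likewise cannot recover $\re^{-\cD(0,n)}$.

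The missing idea is to bound the speed through the cloud that contains the deep well, not through the rightmost cloud. You can do this with your own truncation. Let $s$ be the speed of the leftmost cloud $[\ell_1+1,j]$ of the pressured system $[\ell_1+1,n]$; monotonicity of cloud speeds gives $u\ge s$. As shown above, $s<0$ forces $j>k$. If $j=n$, $s$ is given by the pressured formula; if $j<n$, by the unpressured formula, whose magnitude is smaller. In either case, using the term $i=k+1$,
$|s|\le\bigl(\sum_{i=\ell_1+1}^{j}a_i^{-1}\re^{V(\ell_1)-V(i-1)}\bigr)^{-1}\le\eps_0^{-1}\re^{V(k)-V(\ell_1)}=\eps_0^{-1}\re^{-\cD(0,n)}$.

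The paper instead also truncates on the right, at $k_2:=\min\{i>k: V(i)\ge V(\ell_1)\}$. The block $[\ell_1+1,k_2]$ is intrinsically stable by Lemma~\ref{l_stable_almost0}. Both parts of Proposition~\ref{prop:stoch-monotonicity} show that its speed with infinite pressure at $k_2$ is at most $\tv$. The denominator of \eqref{speed_infinite_pressure} for that block is at least $\eps_0\re^{V(k_2)-V(k)}$, which gives \eqref{depth_speed_upper} without any cloud-decomposition analysis.
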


\begin{figure}
\begin{center}
\includegraphics{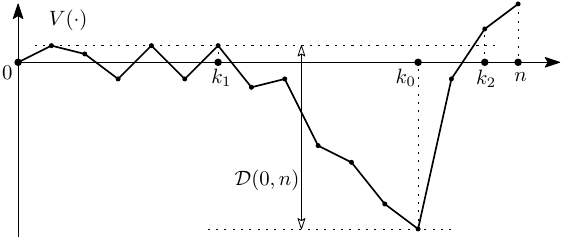}
\caption{Illustration for the proof of Lemma~\ref{l_depth_speed}; 
this time, for more precision,  the illustration retains the discrete nature of the potential.}
\label{f_depth_and_ks}
\end{center}
\end{figure} 
\begin{proof}
We refer to Figure~\ref{f_depth_and_ks} for a visualization of several of the quantities defined in this proof. 
First observe from~\eqref{eq:D-when-max}
and the fact that $V(n) = \max_{0\leq j \leq n} V(j)$ that
\begin{align}
\label{eq:D-0-n}
    \cD (0,n ) = \max_{0 \leq k \leq n} \Bigl( \max_{0 \leq \ell \leq k} V(\ell) - V(k) \Bigr) .
\end{align}
To prove~\eqref{depth_speed_lower}, let~$k'\in[0,n]$ be the smallest~$k$ such that $V(k ) = \min_{0\leq j \leq n} V(j)$. 
In the formula~\eqref{V_v_explicit} for $\hv$, since $V(n) \geq V(0)$, 
the absolute value of the numerator equals
$\re^{V(n)-V(0)} (1-\re^{-(V(n)-V(0))})$, while
the denominator is bounded above 
by $\eps_0^{-1}n \re^{V(n)-V(k')}$. That is,
\[ | \hv | \geq \eps_0 n^{-1}  (1-\re^{-(V(n)-V(0))}) \re^{ - ( V(0) - V (k'))} , \]
where, by~\eqref{eq:D-0-n}, $\cD (0,n ) \geq \max_{0 \leq k \leq n} ( V(0) - V(k) ) = V(0) - V(k')$,
which gives~\eqref{depth_speed_lower}.

Next, we prove~\eqref{depth_speed_upper}, so we consider the system with infinite pressure from the right.
Let~$k_0$ be the largest~$k \in [0,n]$ for which we have
\[
\max_{0 \leq \ell \leq k}V(\ell)-V(k)
=\cD (0,n),
\]
i.e., the largest site where the maximal descent from
the running maximum is achieved.  
Define~$k_1$ to be the largest~$k\in[0,k_0]$
such that $V(k)=\max_{0 \leq j \leq k_0} V(j)$.
Note that $V(k_1)\leq V(n)$, by hypothesis.
Observe that, by construction and~\eqref{eq:D-0-n}, $V(k_1) - V(k_0) = \cD (0,n)$.

Next note that $k_0 =n$ if and only if $\cD (0,n) = 0$, which by~\eqref{eq:D-0-n}, occurs if and only if $V(0) \leq V(1) \leq \cdots \leq V(n)$. In this case~\eqref{depth_speed_upper} follows directly
from the fact that the denominator in~\eqref{speed_infinite_pressure} is bounded below by $\eps_0 \re^{V(n) - V(0)}$. Suppose instead that $k_0 < n$. Then define
\[
 k_2 := \min\big\{k>k_0: V(k)\geq V(k_1)\big\},
\]
and observe that the group $[ k_1+1,k_2]$
is intrinsically stable, by Lemma~\ref{l_stable_almost0}.
Then, observe that the magnitude of the
speed of this 
cloud with infinite pressure from the right
will be at least~$|\tv|$, because (i) the infinite pressure at $k_2$ is at least the ``pressure'' from the presence of any particles in $[k_2+1,n]$ and infinite pressure at $n$, and (ii) any particles in $[0,k_1]$ 
serve to slow down the cloud's movement to the left.
More formally,
start the system $[1,n]$ with infinite pressure at $n$, and the subsystem $[k_1+1,k_2]$ from corresponding positions, with particles $[1,k_1]$ removed and infinite pressure imposed at $k_2$. Then 
applying both parts of  
Proposition~\ref{prop:stoch-monotonicity}
shows that there is a coupling under which each particle of the subsystem remains no farther to the right than its counterpart in the original system.
Hence an upper bound for $|\tv|$ is the magnitude of the speed
of the system $[ k_1+1,k_2]$ with infinite pressure from the right. For the latter, we use the 
appropriate version of the
formula~\eqref{speed_infinite_pressure}, the denominator of which is bounded below by $\eps_0 \re^{V(k_2) - V(k_0)}$ (recall that $k_0 < k_2$). Hence we get
\[ | \tv | \leq \eps_0^{-1} \re^{V(k_2)-V(k_1)-(V(k_2) - V(k_0))} =  \eps_0^{-1} \re^{-( V(k_1)-V(k_0))} ,\]
which yields~\eqref{depth_speed_upper}.
\end{proof} 

To give a flavour of what we are going to 
do in the next section,
let us discuss the situation shown in Figure~\ref{f_stable_right}: 
we want to argue that the whole cloud has to be intrinsically stable.
The idea is that, first, by Lemma~\ref{l_stable_almost0},
the particles in $[1,n_1]$ should belong to the same
stable cloud, and, by~\eqref{depth_speed_upper}, 
the absolute value of the speed
of that cloud has to be quite small, around $\re^{-\cD}$
at most. Now, let us think of the stable 
cloud that contains the rightmost particle ($n$).
If we assume that this cloud does not
contain those particles with indices in $[1,n_1]$,
then, by~\eqref{depth_speed_lower}, 
the absolute value of its speed would
be much larger than $\re^{-\cD}$ due to the fact that
$\cD(n_1,n)<\cD$
(note also that the speed has to be 
strictly negative since the 
maximum of the potential is at~$n$).
The associated strong law 
for finite systems (see Theorem 2.1 of~\cite{MMPW23})
then gives a contradiction, as particles to the right would want to go to the left faster than particles to the left.
It follows that there can be only one stable cloud,
travelling to the left with   speed of roughly $\re^{-\cD}$.
\begin{figure}
\begin{center}
\includegraphics{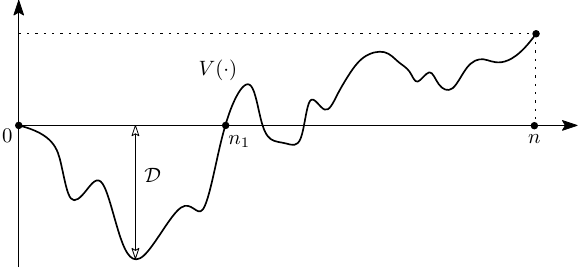}
\caption{Another situation with one stable cloud.}
\label{f_stable_right} 
\end{center}
\end{figure}

\section{Proofs in Sinai's regime}
\label{sec:sinai-proofs}

\subsection{The construction and growth of record wells}
\label{s_infinite_Sinai}

In Sinai's regime, conditions~\eqref{ass:ue} and~\eqref{Sinai's_regime} ensure that the potential
defined by~\eqref{df_potential} is a trajectory of a random walk whose increments are bounded and have mean zero. As already discussed in Section~\ref{sec:potentials}, the behaviour of the potential
can be linked to the problem of determining which groups of the particles in the particle system form stable clouds, and what the intrinsic speeds of these clouds are. In this section we give an explicit scheme for determining the stable clouds from the potential, and use the fact that the potential is well-approximated by a Brownian motion trajectory to draw conclusions about statistics of those stable clouds.

We describe a construction that lets us identify \emph{potential wells} through successive local maxima and minima of the potential, and to define associated well depths: see Figure~\ref{f_wells} for a diagram. First define the descent depth from the current maximum as
\begin{equation}
\label{df_left_climb}
 \cL(x) := \max_{0 \leq y \leq x} V(y) - V(x).
\end{equation}

\begin{figure}[!h]
\begin{center}
\includegraphics[width=\textwidth]{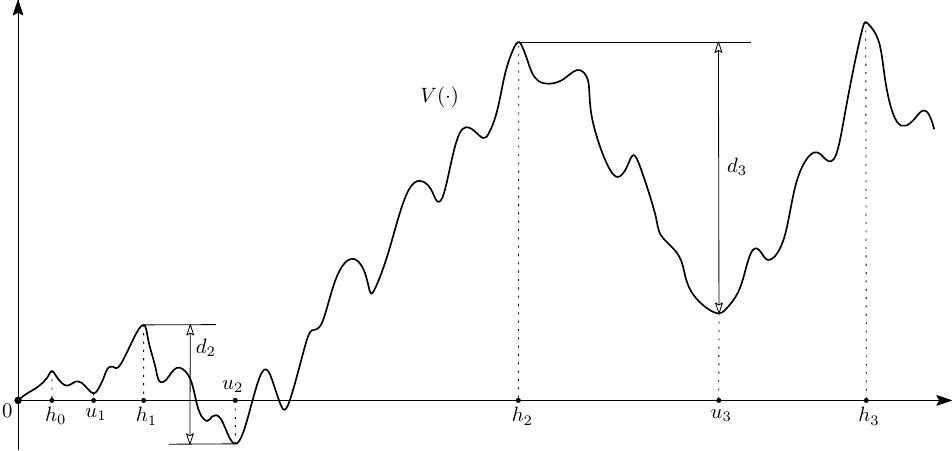}
\caption{Illustration of the definition of potential wells. Roughly, the $h_k$ and $u_k$ are interlacing local maxima and minima, and the $d_k$ are the corresponding depths of the wells.}
\label{f_wells}
\end{center}
\end{figure}

Then, we set 
(here and in the sequel $\argmin$ and $\argmax$ 
always choose the leftmost candidate in case there are 
ties)
\begin{align*}
 h_0 & := \min\{n\geq 0: V(n+1)<V(n)\}, \\
 \ell_1 & := \min\{x>h_0: \cL(x)>0\},\\
 m_1 & := \min\{x>\ell_1: V(x)>V(h_0)\},\\
 u_1 & := \argmin_{x\in[h_0,m_1]} V(x),\\
 d_1 & := \cL(u_1),
\end{align*}
and then, inductively, for $k \in \N$,
\begin{align*}
 \ell_{k+1} & := \min\{x>m_k: \cL(x)>d_k\},\\
 h_k & := \argmax_{x\in[m_k,\ell_{k+1}]} V(x),\\
 m_{k+1} & := \min\{x>\ell_{k+1}: V(x)>V(h_k)\},\\
 u_{k+1} & := \argmin_{x\in[h_k,m_{k+1}]} V(x),\\
 d_{k+1} & := \cL(u_{k+1}).
\end{align*}
Also, denote for $k\in \N$,
\begin{equation}
\label{eq:Delta-k}    
\Delta_k := V(h_k)-\max_{0\le j<h_k}V(j) > 0
\end{equation}
for the ladder height at $h_k$. 
Since $h_k$ is a strict record, $V(h_k)-V(j)\ge \Delta_k$ for all $j<h_k$.
We note that $u_k$ and $h_k$ are \emph{not} stopping times with respect to the sequence $V(0), V(1), \ldots$, but $m_k$ and $\ell_k$ are stopping times, which will be useful below.
We will need the following properties of this construction (again, see Figure~\ref{f_wells} for an illustration).

\begin{lemma}
    \label{lem:construction}
    The above construction defines two integer-valued sequences 
$(h_k)_{k\geq 0}$
and $(u_k)_{k\geq 1}$ such that 
$0\leq h_0<u_1<h_1<u_2<h_2<\cdots$
and the following properties hold.
\begin{thmenumi}[label=(\roman*)]
 \item 
     \label{lem:construction-i}
 For all $k \in \ZP$, $V(h_{k+1} ) > V (h_k)$.
 \item 
     \label{lem:construction-ii}
     For all $k \in \ZP$, $V(h_k) = \max_{0 \leq x \leq u_{k+1}} V(x) = \max_{u_k \leq x \leq u_{k+1}} V(x)$ (set $u_0:=0$).
 \item 
     \label{lem:construction-iii}
     For all $k \in \N$, $d_k = V(h_{k-1} ) - V (u_k) = \cD (0,h_k) = \cD (h_{k-1}, h_k)$.
 \item
     \label{lem:construction-iv}
     For all $k \in \N$, $ \cD (h_{k}, h_{k+1}) >  \cD (h_{k-1}, h_k)$.
 \end{thmenumi}
\end{lemma}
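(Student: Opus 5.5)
The plan is to run a single induction on $k$ that carries a small set of invariants about running maxima and the descent function $\cL$. All four properties of Lemma~\ref{lem:construction} are then read off from these invariants. First I will check that every stopping time in the construction is finite. Under~\eqref{Sinai's_regime}, \eqref{ass:ue} and~\eqref{ass:nestling}, $V$ is a random walk with bounded, mean-zero, nondegenerate increments. Hence $\IP$-a.s.\ $\limsup V=+\infty$, $\liminf V=-\infty$, and after any time $V$ makes descents of arbitrary size from its running maximum. So $h_0,\ell_k,m_k$ are a.s.\ finite, and then so are $h_k$ and $u_k$, being argmax and argmin over finite intervals. Everything after this step is deterministic, given finiteness.

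The invariants I would prove for each $k\ge1$ are:
\begin{enumerate}[label=(\alph*)]
\item $\max_{0\le x<m_k}V(x)=V(h_{k-1})<V(m_k)$;
\item $\max_{0\le x<m_k}\cL(x)=d_k=V(h_{k-1})-V(u_k)$;
\item $d_k>d_{k-1}$ for $k\ge2$.
\end{enumerate}

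For the base case $k=1$: $V$ is nondecreasing on $[0,h_0]$ and drops at $h_0+1$, so $\ell_1=h_0+1$. On $[0,m_1)$ the running maximum is $V(h_0)$, by the definition of $m_1$. Hence on $[h_0,m_1)$ we have $\cL(x)=V(h_0)-V(x)$, which is maximised at $u_1$, while $\cL=0$ on $[0,h_0]$.

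For the inductive step, the key observation concerns the running maximum on $[0,\ell_{k+1}]$. Points before $m_k$ have $V\le V(h_{k-1})<V(m_k)\le V(h_k)$. Points in $[m_k,\ell_{k+1}]$ have $V\le V(h_k)$ by the choice of $h_k$ as argmax. Points in $(\ell_{k+1},m_{k+1})$ have $V\le V(h_k)$ by the definition of $m_{k+1}$. This gives (a) at level $k+1$.

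It follows that $\cL(\ell_{k+1})=V(h_k)-V(\ell_{k+1})>d_k$. In particular $V(\ell_{k+1})<V(h_k)$, so $h_k<\ell_{k+1}<m_{k+1}$, and therefore $d_{k+1}\ge\cL(\ell_{k+1})>d_k$, which is (c). On $[h_k,m_{k+1})$ we have $\cL(x)=V(h_k)-V(x)$, maximised at $u_{k+1}$. On $[m_k,h_k]\subset[m_k,\ell_{k+1})$ we have $\cL\le d_k$ by the definition of $\ell_{k+1}$. Before $m_k$, $\cL\le d_k$ by induction. Together with (c) this gives (b).

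The ordering $h_{k-1}<u_k<m_k\le h_k$ comes from $V(u_k)\le V(\ell_k)<V(h_{k-1})$ and $u_k<m_k$ (since $V(m_k)>V(h_{k-1})$ while $V(u_k)<V(h_{k-1})$). The four properties then follow.
\begin{itemize}
\item[(i)] $V(h_{k+1})\ge V(m_{k+1})>V(h_k)$.
\item[(ii)] This is (a) combined with $u_{k+1}<m_{k+1}$ and $h_k\in[u_k,u_{k+1}]$.
\item[(iii)] Since $V(h_k)$ is the maximum over $[0,h_k]$, formula~\eqref{eq:D-when-max} gives $\cD(0,h_k)=\max_{x\le h_k}\cL(x)$. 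By (b) and the bound $\cL\le d_k$ on $[m_k,h_k]$ just shown, this equals $d_k$. For $\cD(h_{k-1},h_k)$, apply~\eqref{eq:D-when-max} on $[h_{k-1},h_k]$. Because $V(h_{k-1})$ is the global maximum up to $h_{k-1}$, the descent from the running maximum started at $h_{k-1}$ coincides with $\cL$ on that interval. It is maximised at $u_k\in[h_{k-1},h_k]$, giving $d_k$ again.
\item[(iv)] This is (iii) together with (c).
\end{itemize}

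The main obstacle is bookkeeping rather than depth: one must show that the bound $\cL\le d_k$ persists across the stretch $[m_k,h_k]$ and is not broken before $\ell_{k+1}$. This needs the strict monotonicity (c), which in turn relies on $\ell_{k+1}$ lying strictly before $m_{k+1}$. I would set up the induction so that (a)–(c) are proved jointly in exactly this order.
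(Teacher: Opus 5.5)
Your proof is correct and follows essentially the same route as the paper's. You identify the running maximum on $[0,m_k)$ as $V(h_{k-1})$, bound $\cL\le d_k$ up to $\ell_{k+1}$ using the definition of $\ell_{k+1}$, derive $d_{k+1}\ge\cL(\ell_{k+1})>d_k$, and read off $\cD(0,h_k)$ and $\cD(h_{k-1},h_k)$ from~\eqref{eq:D-when-max}; organising this as a joint induction on (a)--(c), with explicit a.s.\ finiteness and strict interlacing, makes rigorous a few points the paper leaves implicit (such as the $\max_{0\le x\le u_{k+1}}$ part of (ii)).
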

\begin{proof}
    First note that 
  by the definition of $m_k$, for all $k \in \N$,
  \[ V (h_k) = \max_{m_k \leq x \leq \ell_{k+1}} V(x) \geq V (m_k) > V (h_{k-1} ) ,\]
  which proves part~\ref{lem:construction-i}.
  
Next  we prove~\ref{lem:construction-ii}.  By definition of $h_0$, $V ( h_0 +1) < V(h_0)$ and $V(h_0) = \max_{0 \leq y \leq h_0+1} V(y)$, so that
    $\cL ( h_0 +1) = V(h_0) - V(h_0+1) > 0$. Thus $\ell_1 = h_0 +1$. Then by definition of $m_1$ we get 
    $V(h_0) = \max_{0 \leq y < m_1} V(y)$, and since $u_1 < m_1$ this means $V(h_0 ) = \max_{0 \leq x \leq u_1} V (x)$, which verifies the $k=0$ case of~\ref{lem:construction-ii}. Suppose $k \in \N$. By definitions of $h_k$ and $m_{k+1}$ we have $V(x) \leq V (h_k)$ for all $m_k \leq x < m_{k+1}$.
    Hence also $V(x) \leq V (h_{k-1} ) \leq V(h_k)$ for all $m_{k-1} \leq x < m_k$. Since $m_{k-1} \leq u_k \leq m_k \leq u_{k+1} < m_{k+1}$, it follows that $V(h_k) = \max_{u_k \leq x \leq u_{k+1}} V(x)$.
This completes the proof of~\ref{lem:construction-ii}.

For~\ref{lem:construction-iii}, first observe that $d_k = \cL (u_k) = V (h_{k-1}) - V (u_k)$, for $k \in \N$, by part~\ref{lem:construction-ii}. Then by definition of $\ell_{k+1}$ we have $\cL (x) \leq \cL (u_k)$ for $m_k \leq x < \ell_{k+1}$. Also, for $h_{k-1} \leq x < m_k$ we have $V (h_{k-1}) \geq V(x) \geq V(u_k)$ so $\cL (x) = V (h_{k-1} ) - V(x) \leq \cL (u_k)$. That is, $\cL(x) \leq \cL(u_k)$ for 
$h_{k-1} \leq x < \ell_{k+1}$ and in particular $\cL(u_k) = \max_{h_{k-1} \leq x \leq h_k} \cL (x)$ for all $k \in \N$. Also, $\cL ( \ell_{k+1} ) > \cL (u_k)$ where $h_k \leq \ell_{k+1} < h_{k+1}$, meaning that $\cL (u_k) < \cL ( u_{k+1} )$ for all $k \in \N$. Since $\max_{0 \leq x \leq h_0} \cL (x) =0$,
we conclude that 
\begin{equation}
\label{eq:L-max}
d_k = \cL (u_k) = \max_{0 \leq x \leq h_k} \cL (x) = \max_{h_{k-1} \leq x \leq h_k} \cL (x) .\end{equation}
Now using part~\ref{lem:construction-ii} and the formula~\eqref{eq:D-when-max} with~\eqref{eq:L-max}, we get
$\cD (0,h_k) = \max_{0 \leq x \leq h_k} \cL(x) = d_k$. Moreover,
by~\eqref{eq:D-when-max}, 
\[ \cD (0, h_k ) \geq \cD (h_{k-1} , h_k ) \geq \max_{h_{k-1} \leq \ell \leq u_k} V (\ell) - V(u_k) = V (h_{k-1}) - V (u_k) = d_k .\]
This completes the proof of~\ref{lem:construction-iii}, and~\ref{lem:construction-iv} also follows from the observation made above that  $\cL (u_k) < \cL ( u_{k+1} )$ for all $k \in \N$.
\end{proof}

In principle, 
one can understand how   the above quantities
behave by comparing the potential~$V$ to the Brownian
motion, via Donsker's theorem, for example.
Roughly speaking, we will have $h_{k+1}=Y_k h_k$ and $d_{k+1}=Z_k d_k$,
where $Y$s and $Z$s are approximately i.i.d.;
the details will be provided below.

Now, our programme is to show that
\begin{itemize}
 \item For all large enough~$k$, 
 $[h_{k}+1,h_{k+1}]$ are distinct stable clouds.
 \item All stable clouds go to the left 
 (i.e., have negative speeds), and the speed
 of the cloud of $[h_{k}+1,h_{k+1}]$ is 
 roughly $-\re^{-d_{k+1}}$.
 \item With $v_n$ being the speed of~$n$th particle and~$d$ being the depth of the well 
 containing~$n$,
 we have $v_n\approx -\exp(-d)\approx
  -\re^{-\sigma Z\sqrt{n}}$, where~$Z$ is a function
  of the environment with distribution 
  ``almost independent'' of~$n$.
\end{itemize}
To make the above rigorous, we need some preparations.
We intend to use 
the techniques of Section~\ref{sec:potentials}
(in particular, Lemma~\ref{l_stable_almost0}
 and the discussion after it, together 
with the ideas described in the last paragraph 
of that section)
to identify the stable clouds.
Then, we will try to use Lemma~\ref{l_depth_speed}
to estimate their speeds, and, in particular,
show that these speeds strictly decrease (so that
our candidate stable clouds are really stable clouds).
In Lemma~\ref{l_depth_speed}, the estimates for the speed
are off by essentially a factor proportional
to the cloud's size,
which, in our case
(for the would-be cloud $[h_{k}+1,h_{k+1}]$),
can be bounded above by~$h_{k+1}$, 
so, in principle, to ``separate'' the clouds, we need that 
$\frac{\re^{-d_k}}{h_k} \gg \re^{-d_{k+1}}$.
Also, we need to take
care of the height difference between the extremes,
with Condition~\eqref{ass:lh}; 
as explained later, this would mean that instead of~$h_k$
we would have~$h_k$ to some power, which changes
essentially nothing;
that is, we need the following relation to hold, 
for a (large) fixed~$M$
\begin{equation}
\label{e_dk_hk}
  \frac{\re^{-d_k}}{h^M_k} \gg \re^{-d_{k+1}}.
\end{equation}
The following result addresses this, showing that with high probability,
each $d_{k+1}$ has to be appreciably larger than $d_k$.
Recall the definition of $\kappa$ from~\eqref{unif_ellipticity},
and denote $\cF_m :=\sigma(V(j) : 0 \leq j \leq m)$.

\begin{lemma}
\label{l_dk_increase}
Assume Conditions~\eqref{ass:nestling}, \eqref{ass:ue}, and~\eqref{Sinai's_regime}.
Then, for any~$\delta>0$, we have
\begin{equation}
\label{eq_dk_increase}
1 - 
 \frac{\delta d_k + \kappa}{(1+\delta)d_k+\kappa}
 \leq
\IP\bigl[d_{k+1}>(1+\delta)d_k\mid \cF_{\ell_{k+1}}\bigr]
 \leq 1 - 
 \frac{\delta d_k - \kappa}{(1+\delta)d_k+\kappa}.
\end{equation}
\end{lemma}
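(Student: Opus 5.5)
At time $\ell_{k+1}$ the walk has just made a descent from its running maximum exceeding $d_k$. I will reduce the event $\{d_{k+1}>(1+\delta)d_k\}$ to a gambler's-ruin event for a mean-zero random walk with increments bounded by $\kappa$, and then estimate that event by optional stopping. The plan has three steps.

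\emph{Step 1: locate the walk at $\ell_{k+1}$.} By the definitions, $\ell_{k+1}$ is a stopping time. At this time let $H:=\max_{0\le y\le \ell_{k+1}}V(y)$, which is $V(h_k)$. By the definition of $\ell_{k+1}$ together with~\eqref{unif_ellipticity}, the walk sits at
\[
V(\ell_{k+1})=H-\cL(\ell_{k+1}), \qquad d_k<\cL(\ell_{k+1})\le d_k+\kappa ,
\]
since the descent jumps past $d_k$ by at most one increment.

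\emph{Step 2: identify $d_{k+1}$ with a two-sided exit.} By construction, $m_{k+1}$ is the first time after $\ell_{k+1}$ at which $V$ exceeds $H$. Also $u_{k+1}$ is the argmin of $V$ on $[h_k,m_{k+1}]$, and $d_{k+1}=H-V(u_{k+1})$ by Lemma~\ref{lem:construction}\ref{lem:construction-ii}--\ref{lem:construction-iii}. Hence
\[
\{d_{k+1}>(1+\delta)d_k\}=\bigl\{\text{after }\ell_{k+1},\ V \text{ goes strictly below } H-(1+\delta)d_k \text{ before exceeding } H\bigr\},
\]
since the minimum before $m_{k+1}$ is what defines $d_{k+1}$. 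I will check that minima attained before $\ell_{k+1}$ are irrelevant. They only reach depth at most $\cL(\ell_{k+1})$, and this is at most $(1+\delta)d_k$ whenever the target level lies below the starting point. If instead $\cL(\ell_{k+1})>(1+\delta)d_k$, the conditional probability is already $1$ and the lower bound is trivial.

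\emph{Step 3: optional stopping.} By the strong Markov property, the increments after $\ell_{k+1}$ are i.i.d.\ with mean zero and independent of $\cF_{\ell_{k+1}}$. Let $T$ be the exit time from the window $[H-(1+\delta)d_k,H]$. Then $T<\infty$ a.s., since the walk is recurrent and nondegenerate by~\eqref{ass:nestling}. Optional stopping applies to the bounded stopped martingale and gives $\IE[V(T)]=V(\ell_{k+1})$.

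On exit at the top, $V(T)\in(H,H+\kappa]$. On exit at the bottom, $V(T)\in[H-(1+\delta)d_k-\kappa,\,H-(1+\delta)d_k)$. Write $p$ for the probability of exiting at the bottom. Solving the optional-stopping identity for $p$ with these overshoot ranges, and using $\cL(\ell_{k+1})\in(d_k,d_k+\kappa]$, gives
\[
\frac{d_k}{(1+\delta)d_k+\kappa}\le p\le \frac{d_k+2\kappa}{(1+\delta)d_k+\kappa}.
\]
The lower bound is exactly $1-\frac{\delta d_k+\kappa}{(1+\delta)d_k+\kappa}$. The upper bound $\frac{d_k+2\kappa}{(1+\delta)d_k+\kappa}$ is $1-\frac{\delta d_k-\kappa}{(1+\delta)d_k+\kappa}$.

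The main obstacle is the overshoot bookkeeping in Step 3. I need to pin down which endpoint carries which $\kappa$, using strict versus non-strict exceedance to match the stated constants. I also need to handle the boundary case in Step 2 carefully.
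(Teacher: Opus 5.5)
Your proposal is correct and follows the paper's proof essentially verbatim. The paper also stops at the first exit after $\ell_{k+1}$ from $[V(h_k)-(1+\delta)d_k,\,V(h_k)]$ and uses the overshoot bounds from \eqref{unif_ellipticity} together with $V(h_k)-d_k-\kappa\le V(\ell_{k+1})\le V(h_k)-d_k$; solving the optional-stopping identity then gives exactly your two bounds. Your separate boundary case is unnecessary, since there the exit time equals $\ell_{k+1}$ and the same inequalities still hold, and in that case the upper bound exceeds $1$ anyway.
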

\begin{proof}
 We have to be a bit careful with the preceding construction since 
$h_k$'s are not stopping times with respect
to the random walk~$V$. 
\begin{figure}
\begin{center}
\includegraphics{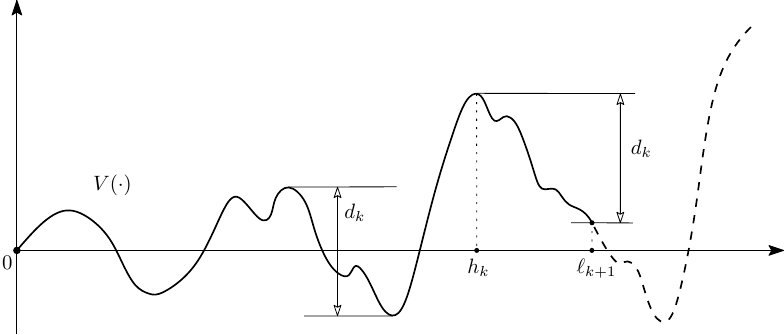}
\caption{Constructing the next well in the proof of Lemma~\ref{l_dk_increase}.}
\label{f_construction_hdl}
\end{center}
\end{figure}
However, $\ell_k$'s are stopping times,
so, when~$\ell_{k+1}$ happens, the process~$V$ is ``free to evolve''
(see Figure~\ref{f_construction_hdl}). 
Define
\[
 \tau := \min\big\{n \geq \ell_{k+1}: 
   V(n)\notin 
[V(h_k)-(1+\delta)d_k, V(h_k)]\big\},
\]
and 
\[
 A := \big\{V(\tau)<V(h_k)-(1+\delta)d_k\big\}
  = \big\{d_{k+1}>(1+\delta)d_k\big\}.
\]
Note that, using~\eqref{unif_ellipticity}, we have 
\begin{equation}
\label{V_ell_k+1_limits} 
 V(h_k)-d_k-\kappa \leq V(\ell_{k+1})
 \leq V(h_k)-d_k.
\end{equation}
We now use the optional stopping theorem for the martingale~$V$
with the stopping time~$\tau$: 
abbreviating
$p:=\IP ( A \mid \cF_{\ell_{k+1}})$ and 
 using~\eqref{unif_ellipticity} again, we get
\begin{align*}
 V(\ell_{k+1}) &= \IE(V(\tau) \2{A} \mid \cF_{\ell_{k+1}}) 
            +  \IE(V(\tau) \2{A^\rc} \mid \cF_{\ell_{k+1}})\\
   &\geq p(V(h_k)-(1+\delta)d_k - \kappa)
       + (1-p) V(h_k) \\
   & = V(h_k) - p((1+\delta)d_k+\kappa),
\end{align*}
so~\eqref{V_ell_k+1_limits} implies that
$p\geq \frac{d_k}{(1+\delta)d_k+\kappa}=1-
 \frac{\delta d_k + \kappa}{(1+\delta)d_k+\kappa}$.
Analogously,
we obtain that
\begin{align*}
 V(\ell_{k+1}) 
   &\leq p(V(h_k)-(1+\delta)d_k)
       + (1-p) (V(h_k)+\kappa) \\
   & = V(h_k) +\kappa - p((1+\delta)d_k+\kappa);
\end{align*}
again using~\eqref{V_ell_k+1_limits} we obtain
that $p\leq 1-
 \frac{\delta d_k - \kappa}{(1+\delta)d_k+\kappa}$
and conclude the proof of the lemma.
\end{proof}

Next, we need a result that permits us to control
the ``second deepest trap'' in~$[h_{k-1}+1,h_k]$.
\begin{lemma}
\label{lem:second_deepest}
Assume Conditions~\eqref{ass:nestling}, \eqref{ass:ue}, and~\eqref{Sinai's_regime}.
For any~$\delta \in (0,1)$, for all~$k\in \N$, 
\begin{equation}
\label{eq_second_deepest}
 \IP\big[\cD(m_k,h_k) \leq (1-\delta)d_k
       \mid \cF_{m_k}\big] 
 \geq 
 \frac{1-\delta}{1+\kappa d_k^{-1}}.
\end{equation}
\end{lemma}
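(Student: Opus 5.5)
The plan is to exhibit, after the stopping time $m_k$, an explicit event on which $\cD(m_k,h_k)\le(1-\delta)d_k$, and to bound its conditional probability below by optional stopping, as in the proof of Lemma~\ref{l_dk_increase}. By definition of $m_k$ we have $V(m_k)>V(h_{k-1})=\max_{0\le j<m_k}V(j)$. So $m_k$ is a strict record, and for $x\ge m_k$ the running maximum in $\cL(x)$ may be taken over $[m_k,x]$ only. Since $m_k$ is a stopping time, the walk $V$ after $m_k$ is a fresh mean-zero martingale with increments in $[-\kappa,\kappa]$ by~\eqref{unif_ellipticity}, independent of $\cF_{m_k}$.

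First, I introduce the stopping time
\[ \sigma:=\min\{x\ge m_k:\cL(x)>(1-\delta)d_k\}, \]
which is a.s.\ finite by recurrence of the walk. Let $M:=\max_{m_k\le y\le\sigma}V(y)$, so that $V(\sigma)<M-(1-\delta)d_k$. After $\sigma$, I run the walk until
\[ \tau:=\min\{x\ge\sigma: V(x)<M-d_k \text{ or } V(x)\ge M\}, \]
and set $A:=\{V(\tau)<M-d_k\}$.

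Next, I check that on $A$ the conclusion holds. For $x\in[m_k,\tau)$ we have $\cL(x)\le(1-\delta)d_k<d_k$ when $x<\sigma$, and $\cL(x)=M-V(x)\le d_k$ when $\sigma\le x<\tau$, because the running maximum stays equal to $M$. Also $\cL(\tau)>d_k$, hence $\ell_{k+1}=\tau$. Then $h_k$, the leftmost argmax of $V$ on $[m_k,\ell_{k+1}]$, is the first time in $[m_k,\sigma]$ at which $V$ equals $M$, so $h_k<\sigma$. Since $V(h_k)=\max_{m_k\le j\le h_k}V(j)$, formula~\eqref{eq:D-when-max} gives
\[ \cD(m_k,h_k)=\max_{m_k\le x\le h_k}\cL(x)\le(1-\delta)d_k, \]
because every such $x$ satisfies $x<\sigma$.

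Then I apply optional stopping to the bounded stopped martingale at $\tau$, conditionally on $\cF_\sigma$, writing $p:=\IP(A\mid\cF_\sigma)$. On $A$ we have $V(\tau)<M-d_k$. On $A^\rc$ we have $V(\tau)\le M+\kappa$, since the jumps are bounded by $\kappa$. This gives
\[ M-(1-\delta)d_k>V(\sigma)\le p(M-d_k)+(1-p)(M+\kappa)=M+\kappa-p(d_k+\kappa), \]
that is, $V(\sigma)=\IE(V(\tau)\mid\cF_\sigma)\le M+\kappa-p(d_k+\kappa)$ together with $V(\sigma)<M-(1-\delta)d_k$. Combining the two yields
\[ p\ge\frac{(1-\delta)d_k+\kappa}{d_k+\kappa}\ge\frac{(1-\delta)d_k}{d_k+\kappa}=\frac{1-\delta}{1+\kappa d_k^{-1}}. \]
Taking the conditional expectation given $\cF_{m_k}\subseteq\cF_\sigma$ (note $d_k$ is $\cF_{m_k}$-measurable) yields~\eqref{eq_second_deepest}.

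The main obstacle is the bookkeeping in the second step, identifying $\ell_{k+1}$ and $h_k$ on $A$. There I will handle the leftmost-argmax convention and the fact that returning to the level $M$ resets the drawdown; this is why the upper exit is taken at $V\ge M$ rather than $V>M$. The martingale estimate itself is routine.
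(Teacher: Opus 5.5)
Your event construction is correct and is essentially the paper's. Your $\sigma$ coincides with the paper's, and on $A$ you correctly get $\ell_{k+1}=\tau$, $h_k<\sigma$, and hence $\cD(m_k,h_k)\le(1-\delta)d_k$. Stopping at $V\ge M$ is harmless but unnecessary. The paper stops at $V>M$, because a return to exactly level $M$ neither raises the running maximum nor moves the leftmost $\argmax$.

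The gap is in the optional stopping step, where the inequalities point the wrong way.

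\begin{itemize}
\item You bound $V(\tau)$ from \emph{above} on both $A$ and $A^\rc$. This gives $V(\sigma)\le M+\kappa-p(d_k+\kappa)$, which is only an \emph{upper} bound $p\le (M+\kappa-V(\sigma))/(d_k+\kappa)$.
\item Your other input, $V(\sigma)<M-(1-\delta)d_k$, is also an upper bound on $V(\sigma)$. Two upper bounds on the same quantity give no lower bound on $p$. The chain ``$M-(1-\delta)d_k>V(\sigma)\le\cdots$'' does not combine, and $p\ge\frac{(1-\delta)d_k+\kappa}{d_k+\kappa}$ is just what you get by equating the two bounds.
\item That intermediate inequality is in fact false in general. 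Suppose the increments take only the values $+1$ and $-\kappa$ (with $\kappa\ge2$). Then the walk exits upward exactly at $M$, and exits downward at a level at most $M-d_k-1$. Hence $p\le (M-V(\sigma))/(d_k+1)$. On the event $M-V(\sigma)=(1-\delta)d_k+1$, which can occur, this is strictly smaller than $\frac{(1-\delta)d_k+\kappa}{d_k+\kappa}$.
\end{itemize}

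What you need are \emph{lower} bounds on $V(\tau)$:
\begin{itemize}
\item $V(\tau)\ge M$ on $A^\rc$;
\item $V(\tau)\ge M-d_k-\kappa$ on $A$, by the bounded jumps (if $\tau=\sigma$, use $V(\sigma)\ge M-(1-\delta)d_k-\kappa$).
\end{itemize}
Then $V(\sigma)=\IE(V(\tau)\mid\cF_\sigma)\ge M-p(d_k+\kappa)$. Together with $V(\sigma)<M-(1-\delta)d_k$, this gives $p>\frac{(1-\delta)d_k}{d_k+\kappa}=\frac{1-\delta}{1+\kappa d_k^{-1}}$, as required.

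This is exactly the paper's computation. It uses the martingale $Y_n=M-V(n)$, with $\gamma$ the first time after $\sigma$ that the drop below $M$ exceeds $d_k$, and $\tau$ the first time after $\sigma$ that $V>M$. It reads $(1-\delta)d_k<Y_\sigma=\IE(Y_{\gamma\wedge\tau}\mid\cF_\sigma)\le\IP(\gamma<\tau\mid\cF_\sigma)(d_k+\kappa)$, using $Y_\tau<0$.
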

\begin{proof}
Let $\delta \in (0,1)$. 
    For convenience write $\psi_k := (1-\delta) d_k$. Define
    \[ \sigma:= \min \{ n > m_k : \max_{m_k \leq s \leq n} V(s) - V(n) > \psi_k \} ,\]
    and set $M:= \max_{m_k \leq s \leq \sigma} V(s)$. Then define
    \[ \gamma := \min \{ n \geq \sigma : M - V(n) > d_k \}, \text{ and } \tau := \min \{ n \geq \sigma : V(n) > M \} .\]
    We claim that $\{ \gamma < \tau\} \subseteq \{ \cD (m_k , h_k ) \leq \psi_k \} $. Indeed,
if $\gamma < \tau$, then no new record above~$M$ is attained between~$\sigma$ and~$\gamma$. 
Hence at~$\gamma$ the running maximum is still~$M$, so $\cL (\gamma) > d_k$.
By definition of~$\sigma$, this means that
all descents before $\sigma$ are at most $\psi_k < d_k$, so $\ell_{k+1} \in [\sigma,\gamma]$. Thus $\max_{m_k \leq s \leq \ell_{k+1}} V(s) = M$ and its leftmost attainment~$h_k$ occurs before~$\sigma$. Therefore, by definition of~$\sigma$, $\cD ( m_k, h_k ) \leq \psi_k$. 

    Now consider the martingale $Y_n = M - V(n)$, $n \geq \sigma$. Applying the optional stopping theorem, and using the fact that $\psi_k < Y_\sigma \leq \psi_k +\kappa$, by~\eqref{unif_ellipticity},
    we get
    \[ \psi_k < Y_\sigma = \Exp ( Y_{\gamma \wedge \tau} \mid \cF_\sigma ) \leq \Pr ( \gamma < \tau \mid \cF_\sigma) ( d_k + \kappa) ,\]
    since $Y_\tau <0$. Hence
    \[ \Pr ( \gamma < \tau \mid \cF_\sigma) \geq \frac{\psi_k}{d_k + \kappa} = \frac{1-\delta}{1+ \kappa d_k^{-1} }.\]
    Taking conditional expectations given $\cF_{m_k}$ yields the claimed bound.
\end{proof}

The next result gives exponential growth bounds on the $d_k$.

\begin{lemma}
\label{lem:dk-asymptotics}
Assume Conditions~\eqref{ass:nestling}, \eqref{ass:ue}, and~\eqref{Sinai's_regime}.
Then
for any $\delta>0$ there exist $C_0 < \infty$ and $c_0 >0$ (depending on $\delta$ and on $d_1$) such that, for all $k \geq 2$,
\begin{equation}
\label{exp_control_dk}
\Pr \big[ \re^{(1-\delta) k} \leq d_k \leq \re^{(1+\delta) k} \mid \cF_{\ell_2} \big] \geq 1 - C_0 \re^{-c_0 k} .\end{equation}
\end{lemma}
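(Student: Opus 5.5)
Conditionally on $\cF_{\ell_2}$, I will work with the logarithms of the depths. Put $Z_k := \log d_{k+1} - \log d_k > 0$ (positivity comes from Lemma~\ref{lem:construction}\ref{lem:construction-iv}), so that $\log d_k = \log d_1 + \sum_{j=1}^{k-1} Z_j$. Note that $d_1$ is $\cF_{\ell_2}$-measurable and $d_1>0$. Lemma~\ref{l_dk_increase} controls the conditional tail of $Z_k$ given $\cF_{\ell_{k+1}}$. Write $1+\delta' = y$. The lower bound gives
\[ \IP[ \re^{Z_k} > y \mid \cF_{\ell_{k+1}} ] \geq \frac{d_k}{y d_k + \kappa} , \]
and the upper bound gives
\[ \IP[ \re^{Z_k} > y \mid \cF_{\ell_{k+1}} ] \leq \frac{d_k + 2\kappa}{y d_k + \kappa} . \]
So, up to errors of order $\kappa/d_k$, $\re^{Z_k}$ is conditionally Pareto(1), i.e.\ $Z_k$ is conditionally $\mathrm{Exp}(1)$ with mean $1$. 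The one thing to check is that $d_k$ and the earlier $Z_j$ are all $\cF_{\ell_{k+1}}$-measurable. This holds because $u_k < \ell_{k+1}$ and $d_k = \cL(u_k)$.

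\textbf{Step 1 (depths grow at least geometrically).} The main obstacle is that the errors $\kappa/d_k$ are only small once $d_k$ is large, while $d_1$ may be as small as order $\kappa$. I would first show that $d_k \geq \re^{ck}$ for some $c>0$, off an event of probability at most $C\re^{-ck}$. Use the lower tail bound with a fixed $y=2$: the probability of doubling is at least $d_k/(2d_k+\kappa) \geq d_1/(2d_1+\kappa) =: q > 0$, uniformly, since $d_k \geq d_1$. So the number of doublings among the first $k$ steps stochastically dominates a $\mathrm{Bin}(k,q)$ variable. To see this, build the indicators sequentially with respect to the filtration $\cF_{\ell_{j+1}}$ and apply Azuma's inequality to the sum of (indicator minus conditional probability). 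Hence $d_k \geq d_1 2^{qk/2}$ except with probability $\re^{-c k}$. This is the source of the dependence of the constants on $d_1$.

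\textbf{Step 2 (moments of the increments).} On the event $G_j := \{ d_j \geq \re^{cj}\}$, the tail bounds above yield
\[ \IE[ Z_j \mid \cF_{\ell_{j+1}} ] = 1 + O(\re^{-cj/2}) \]
(allowing a logarithmic factor from integrating the error), and also
\[ \IE[ \re^{\lambda Z_j} \mid \cF_{\ell_{j+1}} ] \leq C_\lambda \quad \text{for } \lambda < 1 , \]
uniformly. For the moment generating function bound I will use the upper tail bound, which gives $\IP[\re^{Z_j}>y \mid \cdot\,] \leq C/y$ for all $y \geq 1$, once $d_j \geq \kappa$.

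\textbf{Step 3 (exponential concentration).} Consider the martingale differences $Z_j - \IE[Z_j \mid \cF_{\ell_{j+1}}]$, stopped at the first time $G_j$ fails. By Step 1, the probability that $G_j$ fails for some $j \geq k/2$ is at most $C\re^{-ck}$. The differences have conditionally uniformly bounded exponential moments. A standard Bernstein/Freedman-type inequality (or a Chernoff bound applied to the product of conditional moment generating functions) then gives
\[ \IP\Bigl[ \Bigl| \sum_{j<k} \bigl( Z_j - \IE[Z_j \mid \cF_{\ell_{j+1}}] \bigr) \Bigr| > \tfrac{\delta}{2} k \Bigr] \leq \re^{-c k} . \]
The conditional means sum to $k + O(\log k)$; the early indices $j<k/2$ are handled via their bounded contribution together with the crude Step 1 bounds. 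Combining these with $\log d_1$ fixed shows that $|\log d_k - k| \leq \delta k$ for large $k$, outside an event of probability at most $C_0 \re^{-c_0 k}$. For small $k$, the constant $C_0$ can be enlarged to cover all remaining cases.
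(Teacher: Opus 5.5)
Your strategy is the same as the paper's. You use the lower bound of Lemma~\ref{l_dk_increase} with $y=2$, plus a Chernoff/Azuma count of doublings, to push $d_j$ out of the range where $\kappa/d_j$ is not negligible. You then get exponential concentration for $\sum_j Z_j$ from the fact that the $Z_j$ are conditionally close to $\mathrm{Exp}(1)$ with uniformly bounded conditional exponential moments. The paper does the second part without a compensator: on $\{d_m\ge D\}$ it iterates the conditional moment bounds $M_\pm(\lambda,D)$, which are close to $(1\mp\lambda)^{-1}$ for a fixed large $D$. So it only needs $d_m\ge D$ rather than your geometric growth, but that difference is cosmetic.

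The step that fails as written is the bookkeeping for the conditional means in Step~3. All you know about $\mu_j:=\IE[Z_j\mid\cF_{\ell_{j+1}}]$ is that $|\mu_j-1|\le 2\kappa/d_j\le 2\kappa/d_1$. Step~1 makes $d_j$ large only off an event of probability $\re^{-cj}$, which is exponentially small in $k$ only when $j$ is of order $k$. Two consequences follow:
\begin{itemize}
\item ``$\sum_{j<k}\mu_j=k+O(\log k)$'' can be asserted only off an event of polynomially small probability.
\item Splitting at $k/2$ leaves an error from the early indices of size up to $\kappa k/d_1$. This is not below $\delta k/2$ for small $\delta$, since $d_1$ may be of order $\kappa$.
\end{itemize}

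The fix is to split at $\lfloor\eta k\rfloor$ with $\eta$ small depending on $\delta$ and $d_1$. The indices $j<\eta k$ then contribute at most $2\kappa\eta k/d_1\le\delta k/4$. A union bound over $j\ge\eta k$ in Step~1 costs probability $C\re^{-c\eta k}$, and on the complement $\sum_{j\ge\eta k}|\mu_j-1|=O(1)$. This is exactly the role of $m=\lfloor\eta k\rfloor$ in the paper's proof. There, the early part is controlled by the crude bound $\IE[\re^{\lambda X_j}\mid\cG_j]\le M_+(\lambda,d_1)$ for the upper tail, and simply dropped for the lower tail. With that change your argument goes through.
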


\begin{remark}
\label{rem:dk-asymptotics}
    Consider
\[
 d_k = d_1 \cdot \frac{d_2}{d_1} \cdot \frac{d_3}{d_2}  \cdots \frac{d_k}{d_{k-1}} ,
\]  
and note that Lemma~\ref{l_dk_increase}, says, for $d_k$ large, for $\theta \geq 1$,
\[ \Pr \left[ \frac{d_{k+1}}{d_k} > \theta \mid \cF_{\ell_{k+1}} \right] \approx \frac{1}{\theta}. \]
Roughly speaking, this says that 
 $d_k$ is approximated by $Y_1 \cdots Y_k
 = \exp(\log Y_1 + \cdots +\log Y_k)$ where  $\IP[Y_j > \theta] = 1/\theta$
for $\theta \geq 1$. This means that
$\log Y_j$ is a unit mean exponential random variable, and then the intuition for the result
is by comparison with concentration for exponential sums. The following proof makes this idea precise.
\end{remark}

\begin{proof}[Proof of Lemma~\ref{lem:dk-asymptotics}]
For the duration of this proof, write $X_k := \log (d_{k+1}/d_k)$ and $\cG_k := \cF_{\ell_{k+1}}$; then $d_k$ is $\cG_k$-measurable.
Several times in this proof we use the fact that $d_{k+1} >d_k$ for all $k \in \N$
(see Lemma~\ref{lem:construction}).
First note that, for $\lambda \in (0,1)$, using the upper bound in  Lemma~\ref{l_dk_increase},
\begin{align*}
    \Exp [ \re^{\lambda X_k} \mid \cG_k ] & = \Exp [ (d_{k+1}/d_k)^\lambda \mid \cG_k ] \\
    & = 1 + \lambda \int_1^\infty \theta^{\lambda-1}  \Pr [ d_{k+1} > \theta d_k \mid \cG_k ] \ud \theta  \\
    & \leq 1 + \lambda \left(1 + \frac{2\kappa}{d_k} \right) \int_1^\infty \theta^{\lambda-2} \ud \theta.
\end{align*}
It follows that, for all $\lambda \in (0,1)$,
\begin{equation}
    \label{eq:M+}
       \Exp [ \re^{\lambda X_k }\mid \cG_k ] \leq M_+ (\lambda , d_k) := \frac{1}{1-\lambda} + \frac{2 \kappa \lambda /d_k}{1-\lambda} .
\end{equation}
A similar argument, using the lower bound in Lemma~\ref{l_dk_increase}, shows that, for all $\lambda >0$,
\begin{align}
    \label{eq:M-}
       \Exp [ \re^{-\lambda X_k }\mid \cG_k ] 
       & = 1 - \lambda \int_1^\infty \theta^{-\lambda-1} \Pr [ d_{k+1} > \theta d_k \mid \cG_k ] \ud \theta \nonumber\\
&       \leq M_- (\lambda , d_k) := \frac{1}{1+\lambda} + \frac{ \kappa \lambda /d_k}{2+\lambda} .
\end{align}
Iterating the bound~\eqref{eq:M+} and using that $d_{k+1} >d_k$ for all $k \in \N$
 leads to the bound, for all $\lambda \in (0,1)$ and  all $k > m$,
\begin{equation}
    \label{eq:M+iterated}
       \Exp \Bigl[ \exp \bigl(\lambda \sum_{j=m}^{k-1} X_j \bigr)\Bigmid \cG_m \Bigr] \leq M_+ (\lambda , D)^{k-m} , \text{ on } \{ d_m \geq D \}.
\end{equation}
Similarly, iterating~\eqref{eq:M-}, for all $\lambda >0$ and all $k > m$,
\begin{equation}
    \label{eq:M-iterated}
       \Exp \Bigl[ \exp \bigl(-\lambda \sum_{j=m}^{k-1} X_j \bigr)\Bigmid \cG_m \Bigr] \leq M_- (\lambda , D)^{k-m} , \text{ on } \{ d_m \geq D \}.
\end{equation}
Observe that
\begin{equation}
    \label{eq:M-limits}
    \lim_{D \to \infty} M_+ (\lambda, D ) = \frac{1}{1-\lambda}, \text{ and } \lim_{D \to \infty} M_- (\lambda, D ) = \frac{1}{1+\lambda}.
\end{equation}

 Next we show that sequence $d_k$ quickly exceeds any fixed value~$D$. To this end,
  let $A_k := \{ d_{k+1} \geq 2 d_k\}$. Then using the lower bound in Lemma~\ref{l_dk_increase} we have
 \begin{equation}
     \label{eq:d-doubles}
     \Pr ( A_k \mid \cG_k ) \geq \frac{d_k}{2d_k +\kappa} \geq p_0 := \frac{d_1}{2d_1+\kappa} >0 .
 \end{equation}
 Let $N_m$ denote the number of $A_1, \ldots, A_{m-1}$ that occur, and write $r:= \lceil \log_2 (D/d_1) \rceil$. Then if $N_m \geq r$ we have $d_m \geq 2^r d_1 \geq D$, so we have $\Pr ( d_m < D \mid \cG_1) \leq \Pr ( N_m < r \mid \cG_1 )$. Here we have from~\eqref{eq:d-doubles}, by iterating the conditional expectation, $\Exp [ \re^{-t N_m} \mid \cG_1 ] \leq (1 - p_0 + p_0 \re^{-t} )^{m-1}$. Hence we obtain the Chernoff-type bound $\Pr (N_m < r \mid \cG_1 ) \leq \re^{tr} (1-p_0 + p_0 \re^{-t} )^{m-1}$. It follows that, for $C_1 < \infty$ and $c_1 >0$ (depending on $d_1$)
 \begin{equation}
     \label{eq:fixed-depth}
     \Pr ( d_m < D \mid \cG_1 ) \leq C_1 \re^{-c_1 m}, \text{ for all } m \in \N.
 \end{equation}

Fix $\delta>0$. We prove that, for $C < \infty$ and $c>0$
(depending on $d_1$),
\begin{equation}
    \label{eq:dk-upper-bound}
    \Pr ( \log d_k > (1+\delta )k \mid \cG_1 ) \leq C \re^{-ck} .
\end{equation}
Set $m = \lfloor \eta k \rfloor$ for $\eta \in (0,1)$ that we will choose to be small later.
Write
\begin{align*}
    \Pr ( \log d_k >  (1+\delta )k \mid \cG_1 )
    \leq \Pr ( \log d_m > \tfrac{\delta}{4} k \mid \cG_1) + 
    \Pr \Bigl( \sum_{j=m}^{k-1} X_j > ( 1 + \tfrac{3\delta}{4} ) k \Bigmid \cG_1 \Bigr) .
\end{align*}
To bound the first term on the right-hand side,
take $\lambda \in (0,1)$. It follows from~\eqref{eq:M+} that $\Exp [ \re^{\lambda X_k} \mid \cG_k] \leq M_+ (\lambda , d_1 ) =: M_1 < \infty$. Iterating this gives $\Exp [ \re^{\lambda ( \log d_m - \log d_1)} \mid \cG_1] \leq M_1^{m-1}$. Then, by Markov's inequality, 
\begin{equation}
    \Pr ( \log d_m > \tfrac{\delta}{4} k \mid \cG_1 ) \leq d_1^{\lambda} \exp ( - \lambda \tfrac{\delta}{4} k + (m-1) \log M_1 ) \leq C_1 \re^{-c_1 k},
\end{equation}
for $C_1 < \infty$ and $c_1>0$ (depending on $d_1$) provided $\eta < \delta/8$ (say) is small enough (depending on $\delta$ and $d_1$). Thus we have, allowing $C_1$ and $c_1$ to change from line to line, 
\begin{align*}
    \Pr ( \log d_k >  (1+\delta )k \mid \cG_1 )
    & \leq C_1 \re^{-c_1 k}  + 
    \Pr \Bigl( \sum_{j=m}^{k-1} X_j > ( 1 + \tfrac{3\delta}{4} ) k \Bigmid \cG_1 \Bigr) \\
    & \leq C_1 \re^{-c_1 k} +     \Pr \Bigl( \sum_{j=m}^{k-1} X_j > ( 1 + \tfrac{3\delta}{4} ) k, \, d_m \geq D \Bigmid \cG_1 \Bigr),
\end{align*}
using~\eqref{eq:fixed-depth}.
Then from~\eqref{eq:M+iterated} and Markov's inequality, for $\lambda \in (0,1)$,
\begin{align*}
      \Pr \Bigl( \sum_{j=m}^{k-1} X_j > ( 1 + \tfrac{3\delta}{4} ) k, \, d_m \geq D  \Bigmid \cG_1 \Bigr)
      & \leq M_+ (\lambda, D)^{k-m} \exp \left( - \lambda ( 1+ \tfrac{3\delta}{4} )k \right) \\
      & \leq \exp \left( - k \left( \lambda ( 1+ \tfrac{3\delta}{4} ) - (1-\eta) \log M_+ (\lambda , D) \right) \right).
\end{align*}
We already chose $\eta < \delta/8$; then choose $\lambda >0$ small enough so that
\[ \lambda ( 1+ \tfrac{3\delta}{4} )  + (1-\eta) \log (1-\lambda) = \lambda  (   \tfrac{3\delta}{4} + \eta ) + O (\lambda^2) > \frac{\delta}{2} \lambda .\]
Then, by~\eqref{eq:M-limits}, we can choose $D$ large enough and we verify~\eqref{eq:dk-upper-bound}.

Finally, we prove for $\delta \in (0,1)$ that, for $C < \infty$ and $c>0$ (depending on $d_1$)
\begin{equation}
    \label{eq:dk-lower-bound}
    \Pr ( \log d_k < (1-\delta )k \mid \cG_1 ) \leq C \re^{-ck} .
\end{equation}
Choose $\eta \in (0,\delta/2)$ and again put $m = \lfloor \eta k \rfloor$. Then
\begin{align*}
\Pr ( \log d_k < (1-\delta) k \mid \cG_1 ) & \leq \Pr \Bigl( \sum_{j=m}^{k-1} X_j < ( 1 -\delta ) k, \, d_m \geq D  \Bigmid \cG_1 \Bigr) + \Pr ( d_m < D \mid \cG_1 ) ,
\end{align*}
where the final term satisfies~\eqref{eq:fixed-depth}. Using~\eqref{eq:M-iterated} and Markov's inequality,
\begin{align*}
    \Pr \Bigl( \sum_{j=m}^{k-1} X_j < ( 1 -\delta ) k, \, d_m \geq D  \Bigmid \cG_1 \Bigr)
    & \leq M_- (\lambda , D)^{k-m} \exp \left( \lambda (1-\delta) k \right) \\
    & \leq \exp \left( - k \left( -\lambda (1-\delta) - (1- \eta) \log M_- (\lambda,D) \right) \right).
\end{align*}
Here, by~\eqref{eq:M-limits}, $\log M_- (\lambda,D) \to - \log (1+\lambda)$ as $D \to \infty$, and 
$-\lambda (1-\delta) + (1- \eta) \log (1+\lambda ) = \lambda (\delta -\eta) + O (\lambda^2)$.
So we can choose $\lambda$ small enough and then $D$ large enough to verify~\eqref{eq:dk-lower-bound}.
\end{proof}

\subsection{Stability and separation for the candidate clouds}
\label{sec:candidate-clouds}

In Sinai's regime Theorem~\ref{t_stable_h_k_k+1}
gives our main structural result, which is
a central step in proving Theorems~\ref{thm:RE_Sinai} and~\ref{t_Sinai_particle}. The result shows that 
candidate clouds $[h_k+1,h_{k+1}]$ are indeed intrinsically stable,
and obtains some quantification of associated statistics. 
Let $u^{(k)}$
and ${\tilde u}^{(k)}$ be the candidates for 
the normal and the 
infinite-pressure speeds of the cloud candidate
$[h_{k-1}+1,h_k]$ (computed 
using the formulas~\eqref{V_v_explicit} 
and~\eqref{speed_infinite_pressure}, respectively).

\begin{theorem}
\label{t_stable_h_k_k+1}
Assume Conditions~\eqref{ass:nestling}, \eqref{ass:ue}, \eqref{ass:lh}, and~\eqref{Sinai's_regime}.
For $\IP$-a.a.\ environments~$\omega$ there exists
$k_0=k_0(\omega) \in \N$ such that the following hold.
\begin{thmenumi}[label=(\roman*)]
\item \label{t_stable_h_k_k+1-i}
 It holds that $\lim_{k \to \infty} k^{-1} \log d_k = 1$, a.s.
 \item \label{t_stable_h_k_k+1-ii}
 $[h_k+1,h_{k+1}]$ 
 is intrinsically stable
for each $k\geq k_0$.
 \item \label{t_stable_h_k_k+1-iii}
 $|u^{(k)}|>|{\tilde u}^{(k+1)}|$ for all $k\geq k_0$.
 \item \label{t_stable_h_k_k+1-iv}
It holds that  $\lim_{k \to \infty} d_k^{-1} \log|u^{(k)}| = -1$.
 \item \label{t_stable_h_k_k+1-v}
 Let~$\hat v\leq 0$ be the speed of the rightmost stable cloud 
in $[1,h_{k_0}]$ (considered as an isolated system);
then $|\hat v|>|{\tilde u}^{(k_0+1)}|$.
\end{thmenumi}
\end{theorem}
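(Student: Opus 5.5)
Part~\ref{t_stable_h_k_k+1-i} comes from Lemma~\ref{lem:dk-asymptotics} and Borel--Cantelli. For fixed $\delta>0$ the bound~\eqref{exp_control_dk} is summable in $k$ given $\cF_{\ell_2}$, so a.s.\ $\re^{(1-\delta)k}\le d_k\le \re^{(1+\delta)k}$ for all large $k$. Letting $\delta\downarrow0$ along a countable sequence gives $k^{-1}\log d_k\to1$. The same argument gives $\log h_k = O(k)$: the potential is a bounded-increment mean-zero random walk, and $d_k\le \max_{j\le h_k}|V(j)|$. By the LIL or a maximal inequality, $h_k\ge c\,d_k^2/\log\log d_k$, and also $h_k\le d_k^{C}$ eventually. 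The upper bound follows by controlling the number of steps before a descent of size $d_{k+1}$ occurs via a Chernoff argument, together with Lemma~\ref{l_dk_increase} applied to $d_{k+1}\le d_k^{1+\delta}$. Thus $\log h_k\le C k$ eventually, and so $\log h_k = o(d_k)$.

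For part~\ref{t_stable_h_k_k+1-ii}, the interval $[h_k,h_{k+1}]$ has its global maximum at $h_{k+1}$ by Lemma~\ref{lem:construction}\ref{lem:construction-ii}, strict since $V(h_{k+1})>V(h_k)$. Also $V(j)\le V(h_k)$ for $h_k\le j<h_{k+1}$. I would show that $V(j)<V(h_k)$ strictly for all such $j$ except $j=h_k$. Then Lemma~\ref{l_stable_almost0} applies to the shifted potential with $i_0=h_k$, since $V(j)-V(h_k)\le 0\le V(h_{k+1})-V(h_k)$. Ties $V(j)=V(h_k)$ at some $j\in(h_k,h_{k+1})$ are the delicate point. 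Lemma~\ref{l_stable_almost0} as stated only requires $V(k)\le 0$ and $V(k)<V(n)$, so ties at level $0$ are in fact allowed. This settles~\ref{t_stable_h_k_k+1-ii} for every $k$, and $k_0$ is only needed for the other parts.

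Parts~\ref{t_stable_h_k_k+1-iii}--\ref{t_stable_h_k_k+1-iv} come from Lemma~\ref{l_depth_speed} applied to $[h_{k-1}+1,h_k]$, using $\cD(h_{k-1},h_k)=d_k$ from Lemma~\ref{lem:construction}\ref{lem:construction-iii}. This gives
\[
|\tilde u^{(k)}|\le \eps_0^{-1}\re^{-d_k}, \qquad |u^{(k)}|\ge \eps_0 h_k^{-1}(1-\re^{-\Delta_k})\re^{-d_k},
\]
using $V(h_k)-V(h_{k-1})\ge\Delta_k$. Condition~\eqref{ass:lh} enters here. Applied at the stopping time $m_k$ (the first overshoot above $V(h_{k-1})$), it bounds $\Pr(\Delta_k<y)\le\gamma y^{\delta_0}$, because $\Delta_k$ dominates a fresh ladder height. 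Taking $y=k^{-2/\delta_0}$ and Borel--Cantelli gives $1-\re^{-\Delta_k}\ge c\,k^{-2/\delta_0}$ eventually. Combined with $\log h_k=O(k)=o(d_k)$, this yields $\log|u^{(k)}|=-d_k(1+o(1))$, which is~\ref{t_stable_h_k_k+1-iv}. For~\ref{t_stable_h_k_k+1-iii} I need $d_{k+1}-d_k\ge \log h_k+C\log k+C$ eventually. Lemma~\ref{l_dk_increase} with $\delta_k=k^{-2}$ gives $\Pr(d_{k+1}\le(1+k^{-2})d_k\mid\cF_{\ell_{k+1}})\le (k^{-2}d_k+2\kappa)/d_k$, which is summable since $d_k\ge \re^{k/2}$. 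So eventually $d_{k+1}-d_k\ge d_k/k^2\ge \re^{k/2}/k^2\gg Ck$, which suffices.

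Part~\ref{t_stable_h_k_k+1-v} is the main obstacle, since it concerns an uncontrolled initial segment. I would choose $k_0$ large, beyond the thresholds from the previous parts, and handle the finite system $[1,h_{k_0}]$. Its rightmost stable cloud has nonpositive speed $\hat v$, because $V(h_{k_0})=\max_{[0,h_{k_0}]}V$ (argue as in Lemma~\ref{l_stable_almost0}). The hard point is excluding $\hat v=0$ and bounding $|\hat v|$ below by roughly $\re^{-d_{k_0}}/\mathrm{poly}$. The rightmost cloud is an intrinsically stable group $[j+1,h_{k_0}]$. Its depth is at most $\cD(0,h_{k_0})=d_{k_0}$, and the height difference between its ends is at least $\Delta_{k_0}$, because $h_{k_0}$ is a strict record. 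Lemma~\ref{l_global_maxima} gives that the maximum sits at the right end, so Lemma~\ref{l_depth_speed}'s lower bound applies and gives $|\hat v|\ge\eps_0 h_{k_0}^{-1}(1-\re^{-\Delta_{k_0}})\re^{-d_{k_0}}$. The same separation estimate as in~\ref{t_stable_h_k_k+1-iii} then gives $|\hat v|>\eps_0^{-1}\re^{-d_{k_0+1}}\ge|\tilde u^{(k_0+1)}|$.
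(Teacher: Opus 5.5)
Your arguments for (i), (iii), (iv) and (v) follow the paper's route. The paper bounds the cloud size by $H_k\le\re^{3k}$ rather than working with $h_k$, but this has the same effect. Part~(ii), however, rests on a false claim.

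It is not true that $V(j)\le V(h_k)$ for all $h_k\le j<h_{k+1}$. Lemma~\ref{lem:construction}\ref{lem:construction-ii} only gives $V(h_k)=\max_{u_k\le x\le u_{k+1}}V(x)$. By definition $V(m_{k+1})>V(h_k)$, and $u_{k+1}<m_{k+1}\le h_{k+1}$. On $[m_{k+1},h_{k+1}]$ the potential climbs to the record $V(h_{k+1})$, and the only descents excluded are those from the running maximum exceeding $d_{k+1}$. So this stretch can contain a sub-well of depth up to $d_{k+1}$, i.e.\ as deep as the main well $[h_k,m_{k+1}]$. Lemma~\ref{l_stable_almost0} therefore gives intrinsic stability of $[h_k+1,m_{k+1}]$ only.

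Your conclusion that (ii) holds for every $k$ is in fact false. Suppose that sub-well has depth close to $d_{k+1}$ and is much wider than the main well. Then the right part, taken in isolation, moves left more slowly than the block $[h_k+1,m_{k+1}]$, and $[h_k+1,h_{k+1}]$ splits into two clouds.

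The missing idea is a probabilistic argument that, for large $k$, the particles to the right of $m_k$ catch up with $[h_{k-1}+1,m_k]$. This requires controlling the second deepest trap in the candidate cloud. The paper uses Lemma~\ref{lem:second_deepest} with $\delta=k^{-2}$ and L\'evy's conditional Borel--Cantelli lemma to get $\cD(m_k,h_k)\le(1-k^{-2})d_k$ eventually. Now suppose the rightmost cloud $[i_k,h_k]$ of the isolated system $[h_{k-1}+1,h_k]$ had $i_k>m_k$. Then \eqref{depth_speed_lower} gives it speed magnitude at least $\eps_0\re^{-3k}\re^{-5\delta_0^{-1}k}\re^{-(1-k^{-2})d_k}$. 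Meanwhile the leftmost cloud contains $[h_{k-1}+1,m_k]$, has depth at least $d_k$, and so moves with magnitude at most $\eps_0^{-1}\re^{-d_k}$. Since $\re^{k^{-2}d_k}$ beats the factors that are polynomial in $\re^{k}$, this contradicts the monotonicity of cloud speeds. This is exactly where $k\ge k_0$ is needed in (ii).

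A smaller point concerns (iii): you cannot apply \eqref{ass:lh} at the stopping time $m_k$ as stated. $\Delta_k$ is the ladder height at the non-stopping time $h_k$, which is selected by the future event that a descent deeper than $d_k$ follows; if $h_k=m_k$ it is even $\cF_{m_k}$-measurable. It is, however, one of the i.i.d.\ ladder heights of $V$. So a union bound over the first $3\re^{3k}$ of them, together with $h_k\le 2\re^{3k}$, gives a usable lower bound on $1-\re^{-\Delta_k}$, as the paper does.
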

\begin{proof}
For part~\ref{t_stable_h_k_k+1-i}, fix $\delta >0$. Then from Lemma~\ref{lem:dk-asymptotics} and the Borel--Cantelli lemma (conditioned on $\cF_{\ell_2}$) we have that $\Pr ( 1-\delta \leq k^{-1} \log d_k  \leq 1+\delta \text{ all but finitely often} \mid \cF_{\ell_2} ) =1$, which gives the claimed convergence, since $\delta>0$ was arbitrary.

For the proofs of parts~\ref{t_stable_h_k_k+1-ii} and~\ref{t_stable_h_k_k+1-iii} we make use of some more notation and several families of events defined in terms of the potential.
Define $H_k:= h_k-h_{k-1}$ to be the size of the candidate
cloud $[h_{k-1}+1,h_k]$.
Define the events (with $\delta_0$ from condition~\eqref{ass:lh} and $\Delta_k$ from~\eqref{eq:Delta-k})
\begin{align*}
 G_1^{(k)} & := \big\{H_k\leq \re^{3k}\big\},
 \\
 G_2^{(k)} & := \big\{d_{k+1}>(1+k^{-2})d_k\big\},
 \\
 G_3^{(k)} & := \big\{1-\re^{-\Delta_k}\geq \re^{-5\delta_0^{-1}k}\big\},
 \\
 G_4^{(k)} & := \big\{\re^{k/2}\leq d_k \leq \re^{2k}\big\}. 
 \end{align*}

 We next show that, a.s., each of the events $G_1^{(k)}$, $G_2^{(k)}$, $G_3^{(k)}$, and $G_4^{(k)}$ occurs for all but finitely many $k$. 
 For $G_4^{(k)}$, this fact is a consequence of Lemma~\ref{lem:dk-asymptotics}. 
 We consider the rest.

Consider $G_1^{(k)}$.
Let us prove that  there exists $c>0$
such that
\begin{equation}
\label{Hk/dk2}
 \IP[H_{k+1} > \re^{k/4} d_k^2\mid \cF_{m_{k}}]\leq \re^{-c k}.
\end{equation}
Note that Lemma~\ref{l_dk_increase} implies that
 $d_{k+1}\leq \re^{\eps k}d_k$ with probability at least $1-c\re^{-\eps k}$. Then (recall~\eqref{df_left_climb}), define
\[
 T = \min\{ s>0 : \cL(m_k+s) > \re^{\eps k}d_k \}.
\]
It then holds that $m_k+T > h_{k+1}$ (and therefore $H_{k+1} < T$)
on the event $\{d_{k+1}\leq \re^{\eps k}d_k\}$; this is because $m_k+T$ has to be in a well of depth greater than $d_{k+1}$ in that case. 
Then, since on every interval of length $(\re^{\eps k}d_k)^2$ the descent $\cL$ is at least $\re^{\eps k}d_k$ with uniformly positive probability, we also have the estimate
\[
\IP\big[T> j (\re^{\eps k}d_k)^2\mid \cF_{m_{k}}\big] < \re^{-c' j},
\]
and hence, by taking $\eps<1/8$, we 
obtain~\eqref{Hk/dk2}.
It follows from~\eqref{Hk/dk2} and the Borel--Cantelli lemma that a.s., for all but finitely many $k \in \N$,
$H_{k+1} \leq \re^{k/4} d_k^2$. But $d_k \leq \re^{5k/4}$ for all but finitely many $k$, by part~\ref{t_stable_h_k_k+1-i}. Hence we verify that
\begin{equation}
    \label{eq:G1-occurs}
  \text{$\Pr$-a.s., }  G^{(k)}_1 \text{ occurs for all but finitely many $k \in \N$}.
\end{equation}

For $G_2^{(k)}$, we have from  the lower bound in Lemma~\ref{l_dk_increase} that 
\begin{equation}
 \label{est_G2k}
\Pr [ ( G_2^{(k)} )^\rc \mid \cF_{\ell_{k+1}} ] 
\leq \frac{k^{-2} d_k +\kappa}{(1+k^{-2}) d_k + \kappa} \leq k^{-2} + \kappa d_k^{-1}.
 \end{equation}
 Here $\sum_{k\in\N} d_k^{-1} < \infty$ by part~\ref{t_stable_h_k_k+1-i}. 
 Hence, by L\'evy's conditional Borel--Cantelli lemma and~\eqref{est_G2k}, 
\begin{equation}
    \label{eq:G2-occurs}
  \text{$\Pr$-a.s., } G^{(k)}_2 \text{ occurs for all but finitely many $k \in \N$}.
\end{equation}

Next consider $G_3^{(k)}$. Define
\begin{align*}
 G_5^{(k)} &:= \big\{h_k \leq 2\re^{3k}\big\}, \\
 G_6^{(k)} &:= \big\{\text{the first }3\re^{3k} 
 \text{ ladder heights of $V(\cdot)$ are each at least }
   \re^{-4\delta_0^{-1}k}\big\}. \end{align*}
Observe that $G_5^{(k)} \cap G_6^{(k)} \subseteq G_3^{(k)}$,
since,  on $G_5^{(k)} \cap G_6^{(k)}$,
$\Delta_k$ is one of the first~$2\re^{3k}$ ladder heights,
and so 
\[ 1 - \re^{-\Delta_k} \geq \frac{\Delta_k}{1+\Delta_k} \geq \frac{ \re^{-4\delta_0^{-1}k}}{1+\re^{-4\delta_0^{-1}k}} > \re^{-5\delta_0^{-1}k}, \]
for $k$ sufficiently large.
Since $h_k = h_0 + \sum_{j=1}^k H_j$, 
and $G^{(j)}_1$ occurs for all sufficiently large $j$, a.s., it follows that $G_5^{(k)}$ occurs
for all large enough $k$, a.s.
Now, due to Condition~\eqref{ass:lh},
\begin{align}
  \IP[G_6^{(k)}] 
  \geq 1 - ( 3\re^{3k} ) \gamma (\re^{-4\delta_0^{-1}k})^{\delta_0} = 1-3\gamma \re^{-k}.
  \label{est_G3k}
\end{align}
Then from the Borel--Cantelli lemma with~\eqref{est_G3k} and~\eqref{eq:G1-occurs}, we obtain
\begin{equation}
    \label{eq:G3-occurs}
  \text{$\Pr$-a.s., } G^{(k)}_3 \text{ occurs for all but finitely many $k \in \N$}.
\end{equation}
Combining~\eqref{eq:G1-occurs}, \eqref{eq:G2-occurs} and~\eqref{eq:G3-occurs} with Lemma~\ref{lem:dk-asymptotics}
  verifies 
that $G_1^{(k)}, G_2^{(k)}, G_3^{(k)}$ and $G^{(k)}_4$ occur a.s.~for all but finitely many $k$.

The idea of part~\ref{t_stable_h_k_k+1-ii}, i.e., 
to show that $[h_k+1,h_{k+1}]$ is  
intrinsically stable for large enough $k$, is to reason in the following way.
First, Lemma~\ref{l_stable_almost0}
implies that $[h_k+1, m_{k+1}]$ is intrinsically stable, and Lemma~\ref{l_depth_speed} shows that its intrinsic speed is about $\re^{-d_{k+1}}$. We will then show, roughly speaking, that 
the particles $[m_{k+1}+1,h_{k+1}]$ will join
that group. Note that the particles $[m_{k+1}+1,h_{k+1}]$
alone need not form a stable cloud; however,
Theorem~2.1 of~\cite{MMPW23} implies that
they can be decomposed into several stable
clouds, and the speed of the rightmost one is 
 maximal among these
(indeed, since it also has to be negative due to the fact
that the unique global maximum of the potential
on~$[h_k+1,h_{k+1}]$ is attained at~$h_{k+1}$,
the absolute value of the speed of that rightmost 
cloud should be minimal among these). Next,
we use Lemma~\ref{lem:second_deepest} to 
control the depth of that rightmost cloud
(i.e., to assure that it is likely much less
than~$d_{k+1}$),
and we also have some control
for the height increase at~$h_{k+1}$ (needed 
for the lower bound in Lemma~\ref{l_depth_speed}).
Then, Lemma~\ref{l_depth_speed} will
 show that the rightmost cloud should
catch up 
with $[h_k+1, m_{k+1}]$. We now give the details.

Assume that $k$ is large, 
and let us consider the particles
$[h_{k-1}+1,h_k]$ as a separate (finite) system.
By Theorem~2.1 of~\cite{MMPW23} this finite system
can be decomposed
into a finite number of stable clouds; let $[i_k,h_k]$
be the rightmost of these.
Our goal is to prove that $i_k=h_{k-1}+1$.
Note that Lemma~\ref{l_stable_almost0} 
and Remark \ref{rems:intrinsic-stability}\ref{rems:intrinsic-stability-iii}
imply that
the particles in $[h_{k-1}+1,m_k]$ belong to the same
stable cloud, so it is enough to prove that $i_k\leq m_k$. 

 Define
   \[  G_7^{(k)}  := \big\{\cD(m_k,h_k) \leq (1-k^{-2})d_k\big\}. \]
Lemma~\ref{lem:second_deepest} implies that
\begin{equation}
\label{est_G5k}
\IP[ ( G_7^{(k)} )^\rc \mid \cF_{m_k}] \leq 1 - \frac{1-k^{-2}}{1+\kappa d_k^{-1}} \leq  \kappa d_k^{-1} + k^{-2},
\end{equation}
which is a.s.~summable, by part~\ref{t_stable_h_k_k+1-i}. Hence L\'evy's conditional Borel--Cantelli lemma implies that, for a.a.~$\omega$,
the events $G_7^{(k)}$ occur for all but finitely many $k \in \N$.
Assume that $k$ is large enough so that $G_1^{(k)} \cap G_3^{(k)} \cap G_4^{(k)} \cap G_7^{(k)}$ occurs, and assume that
$i_k>m_k$. Then, let~$v'$ be
 speed of the leftmost cloud of the isolated system $[h_{k-1}+1,h_k]$,
and~$v''$ be the speed of $[i_k,h_k]$
 (when considered as separate systems).
Since $v''<0$ (as $V(h_k)>V(i_k-1)$) and cloud speeds are non-decreasing 
from left to right, we have $v'\leq v''<0$, and hence $|v'|\geq|v''|$.
We now show that, for all large~$k$, $|v'|<|v''|$, which gives a contradiction.
Since $v'<0$, the leftmost cloud $[h_{k-1}+1,j]$ has $V(j)>V(h_{k-1})$,
so $V(j)=\max_{[h_{k-1},j]}V$ by Lemma~\ref{l_global_maxima}; moreover,
$j\geq m_k$ (this cloud contains $[h_{k-1}+1,m_k]$), so $\cD(h_{k-1},j)\geq d_k$.
Hence Lemma~\ref{l_depth_speed} implies that
$|v'|\leq \eps_0^{-1}\re^{-d_k}$ and that, 
on $G_1^{(k)}\cap G_3^{(k)} \cap G_7^{(k)}$,
$|v''|\geq \eps_0 \re^{-3k} \re^{-5\delta_0^{-1}k}\re^{-d_k(1-k^{-2})}$
(apply~\eqref{depth_speed_lower} to $[i_k,h_k]$ using $V(h_k) - V(i_k-1) \geq \Delta_k$, $h_k -i_k +1 \leq H_k$, and $\cD(i_k -1,h_k) \leq \cD(m_k,h_k)$, since $i_k -1 \geq m_k$). We thus need to check that
$\re^{k^{-2}d_k}>\eps_0^{-2}\re^{(3+5\delta_0^{-1})k}$,
which is indeed so on~$G_4^{(k)}$ for all large
enough~$k$. This gives us a contradiction, and so   $[h_{k-1}+1, h_k]$
is intrinsically stable, proving part~\ref{t_stable_h_k_k+1-ii}.

Roughly speaking, part~\ref{t_stable_h_k_k+1-iii} says that the candidate
stable clouds $[h_k+1,h_{k+1}]$ (which part~\ref{t_stable_h_k_k+1-ii} says are intrinsically stable for large enough $k$) have, for large enough $k$, strictly ordered speeds. In fact,
it will follow from part~\ref{t_stable_h_k_k+1-iii} that, in the infinite system, all but finitely many of the candidate stable clouds remain distinct
stable clouds. 
The idea of the proof of part~\ref{t_stable_h_k_k+1-iii} is as follows.
 Lemma~\ref{l_depth_speed} tells us that $u^{(k)}$ and ${\tilde u}^{(k+1)}$
 are  roughly~$\re^{-d_k}$ and $\re^{-d_{k+1}}$, respectively, up to correction factors that  we need to control (here
 Condition~\eqref{ass:lh} is used to make sure $V(h_k) - V(h_{k-1})$ is not too small). Then Lemma~\ref{l_dk_increase} ensures that $d_{k+1}$ is sufficiently larger than $d_k$ to ensure the claimed inequality in speeds.


Part~\ref{t_stable_h_k_k+1-iii} follows from the claim
\begin{equation}
    \label{eq:Gs-imply-speeds}
    \text{ for all $k$ sufficiently large, } G_1^{(k)} \cap G_2^{(k)} \cap G_3^{(k)} \cap G_4^{(k)} \text{ implies } | u^{(k)} | > | \tilde u^{(k+1)} | .
\end{equation}
To verify the claim~\eqref{eq:Gs-imply-speeds}, 
recall from Lemma~\ref{lem:construction}\ref{lem:construction-iii}
that $d_k = \cD (h_{k-1},h_k)$. Then 
we have by Lemma~\ref{l_depth_speed} applied to
the system $[h_{k-1}+1,h_k]$ and the system $[h_{k}+1,h_{k+1}]$ we have
(since $V(h_k)-V(h_{k-1})\geq \Delta_k$)
\begin{equation}
\label{est_uk_tilde_uk}
 |u^{(k)}|\geq \frac{\eps_0}{H_k}(1-\re^{-\Delta_k})\re^{-d_k}
  \text{ and }
 |{\tilde u}^{(k+1)}| \leq \eps_0^{-1} \re^{-d_{k+1}}.
\end{equation}
Then, we need to prove that 
$\frac{\eps_0}{H_k}(1-\re^{-\Delta_k})\re^{-d_k}
> \eps_0^{-1} \re^{-d_{k+1}}$, which is equivalent
to $\re^{d_{k+1}-d_k}\geq \eps_0^{-2}H_k(1-\re^{-\Delta_k})^{-1}$.
But, on $G_2^{(k)}\cap G_4^{(k)}$ we have 
$\re^{d_{k+1}-d_k}\geq \exp(k^{-2}d_k)
\geq \exp(k^{-2}\re^{k/2})$, and, 
 on $G_1^{(k)}\cap G_3^{(k)}$ we have 
$\eps_0^{-2}H_k(1-\re^{-\Delta_k})^{-1}
\leq \eps_0^{-2} \exp(3k+5\delta_0^{-1}k)$.
Comparing these expressions,
we verify the claim~\eqref{eq:Gs-imply-speeds}. This proves part~\ref{t_stable_h_k_k+1-iii}.

Next we prove part~\ref{t_stable_h_k_k+1-iv}. Using the bounds in Lemma~\ref{l_depth_speed} we get, on $G_1^{(k)} \cap G_3^{(k)}$,
\[ \eps_0^{-1} \re^{-d_k} \geq | u^{(k)} | \geq \eps_0 \re^{-3k} \re^{-5\delta_0^{-1} k} \re^{-d_k} ,\]
which, together with part~\ref{t_stable_h_k_k+1-i}, yields part~\ref{t_stable_h_k_k+1-iv}. 


As for part~\ref{t_stable_h_k_k+1-v},
let $[i,h_k]$ be the rightmost cloud of the isolated system $[1,h_k]$, with speed~$\hat v$. 
Since $h_k$ is a strict record, 
we have $V(h_k)-V(i-1)\ge\Delta_k$ and $\hat v<0$. 
Also, $\cD(i-1,h_k)\le\cD(0,h_k)=d_k$ by Lemma~\ref{lem:construction}\ref{lem:construction-iii}. Hence, by~\eqref{depth_speed_lower},
\[
|\hat v|\ge\eps_0 h_k^{-1}(1-\re^{-\Delta_k})\re^{-d_k},
\]
while $|\tilde u^{(k+1)}|\le\eps_0^{-1}\re^{-d_{k+1}}$ by~\eqref{depth_speed_upper}. On $G_2^{(k)}\cap G_4^{(k)}\cap G_5^{(k)}\cap G_6^{(k)}$ we have
\[
\re^{d_{k+1}-d_k}\ge\exp(k^{-2}\re^{k/2}) > 4\eps_0^{-2}\re^{(3+4\delta_0^{-1})k}\ge\eps_0^{-2}h_k(1-\re^{-\Delta_k})^{-1}
\]
for large $k$. So~\ref{t_stable_h_k_k+1-v} holds for all large $k$; enlarge $k_0$ accordingly.
%
\end{proof}

\subsection{Proofs of Theorems~\ref{thm:RE_Sinai} and~\ref{t_Sinai_particle}}
\label{sec:sinai-theorem-proofs}

Basically, Theorem~\ref{t_stable_h_k_k+1} is the most 
complete result, which shows where the stable 
clouds are located and what are their speeds. Now let us discuss
how to obtain the results about convergence in distribution
in Theorems~\ref{thm:RE_Sinai} and~\ref{t_Sinai_particle}.

\begin{proof}[Proof of Theorem~\ref{thm:RE_Sinai}.]
Consider the isolated system $[1,h_{k_0}]$ whose cloud decomposition is
into some $\cC_1, \ldots, \cC_\ell$ and whose rightmost stable cloud $\cC_{\ell}$ has speed $\hat v$ where $| \hat v | > | \tilde u^{(k_0+1)}|$, as in 
Theorem~\ref{t_stable_h_k_k+1}\ref{t_stable_h_k_k+1-v}. The isolated system $[1,h_{k_0+1}]$ with infinite pressure from the right then 
has cloud decomposition into clouds $\cC_1, \ldots, \cC_\ell$ and $\cC_{\ell+1} = [h_{k_0}+1,h_{k_0+1}]$
(this follows from Algorithm 1 in~\cite{MMPW23}).
Since the speed of cloud $\cC_\ell$ is strictly less than that of cloud $\cC_{\ell+1}$, we may apply Lemma~\ref{lem:couple_infinite_pressure} to show that there is a coupling of the infinite system and the finite system  $[1,h_{k_0+1}]$ with infinite pressure from the right, in which the inter-particle distances corresponding to clouds $\cC_1, \ldots, \cC_\ell$ coincide after some finite time. It follows that $\cC_1, \ldots, \cC_\ell$ are stable clouds in the infinite system, in the sense of Definition~\ref{def:stable-cloud-finite}. Similarly, consider the finite system $[1, h_{k+1}]$, $k > k_0$, with infinite pressure from the right. By Theorem~\ref{t_stable_h_k_k+1}\ref{t_stable_h_k_k+1-iii},
this finite system has cloud decomposition $\cC_1, \ldots, \cC_\ell, [h_{k_0}+1, h_{k_0+1}], \ldots, [h_k+1,h_{k+1}]$, with strictly increasing cloud speeds. Another application of Lemma~\ref{lem:couple_infinite_pressure}, using the fact that $k>k_0$ was arbitrary, shows that 
 $[h_{k_0}+1, h_{k_0+1}]$, $[h_{k_0+1}+1,h_{k_0+2}]$, \ldots, are stable clouds in the infinite system.
 In particular, this means that sequences $s_k$
and $u^{(k)}$ coincide up to a finite ($\omega$-dependent) shift, and the same for sequences $S_k$ and $H_k$.

To prove part~\ref{thm:RE_Sinai-i} of the theorem, we need to show that each particle in a cloud satisfies a strong law with speed given by the cloud's speed. Consider one of the clouds~$\cC_k$, with speed~$s_k$, that we now know is stable, and suppose particle~$j$ is in cloud~$\cC_k$.
There is a coupling 
(analogous to the coupling of two finite systems  in Proposition~\ref{prop:stoch-monotonicity}\ref{prop:stoch-monotonicity-ii})
of the infinite system and the finite system made up of particles in clouds $\cC_1, \ldots, \cC_k$ in which all particles in the infinite system are no further to the right than in the finite system. In the finite system we have from~Theorem~2.1 of~\cite{MMPW23} that particle $j$ 
satisfies $t^{-1} X_j (t) \to s_k$, a.s. So in the infinite system we get
\[ \limsup_{t \to \infty} t^{-1} X_j (t) \leq s_k , \text{ $\Pr_\omega^x$-a.s., for $\Pr$-a.a.\ $\omega$.} \]
On the other hand, consider the finite system made up of particles in clouds $\cC_1, \ldots, \cC_k, \cC_{k+1}$ with infinite pressure from the right.
Now there is a coupling 
(analogous to the coupling of two finite systems  in Proposition~\ref{prop:stoch-monotonicity}\ref{prop:stoch-monotonicity-i})
of the infinite system and the finite system  in which all particles in the finite system are no further to the right than in the infinite system. We argued above that $\cC_k$ is still a stable cloud in this bigger finite system (with infinite pressure). So, again using Theorem~2.1 of~\cite{MMPW23}, in the infinite system we get
\[ \liminf_{t \to \infty} t^{-1} X_j (t) \geq s_k ,  \text{ $\Pr_\omega^x$-a.s., for $\Pr$-a.a.\ $\omega$.} \]
Thus we obtain the strong law~\eqref{eq:slln}, proving part~\ref{thm:RE_Sinai-i}.

Recall that $s_k$
and $u^{(k)}$ coincide up to a finite  shift. Then combining Theorem~\ref{t_stable_h_k_k+1}\ref{t_stable_h_k_k+1-i} and~\ref{t_stable_h_k_k+1-iv} shows that $u^{(k)} = - \exp ( - \re^{k(1+o(1))})$, as $k \to \infty$, $\IP$-a.s. Then part~\ref{thm:RE_Sinai-ii} of the theorem follows.

Next we prove part~\ref{thm:RE_Sinai-iii}.
For large $k$, as argued above, $s_k = u^{(k+c)}$ and $S_k = H_{k+c}$ where $c=c(\omega)$ is random. 
From Theorem~\ref{t_stable_h_k_k+1}\ref{t_stable_h_k_k+1-iv}
we have that, a.s., as $k \to \infty$,
\begin{equation}
\label{eq:s-d-ratios}
\frac{\log |s_{k+1}|}{\log | s_k|} = (1+o(1)) \frac{d_{k+c+1}}{d_{k+c}} .\end{equation}
From the upper and lower bounds in Lemma~\ref{l_dk_increase},
and the fact that $d_k \to \infty$ we have, $\Pr$-a.s.,
\[ \lim_{k \to \infty} \Pr \Bigl[ \frac{d_{k+1}}{d_k} > y \Bigmid \cF_{\ell_{k+1}}  \Bigr] = \frac{1}{y},\]
and hence, by dominated convergence,
$\lim_{k \to \infty} \Pr [ A \cap \{ d_{k+1}/d_k > y \} ] = \Pr (A)/y$
for every $A \in \cF'_{\ell_j}$, with fixed $j \in \N$,  where $\cF'_m := \sigma ( a_i, b_i : i \leq m)$.
Here we use that Lemma~\ref{l_dk_increase} holds, with the same
proof, with $\cF_{\ell_{k+1}}$ replaced by $\cF'_{\ell_{k+1}}$. Every event in the environment $\sigma$-algebra can be approximated in probability by such events, so the same holds for every such event $A$. Applying this with $A = \{ c = j\}$ and summing over $j$, we get
$\lim_{k \to \infty} \Pr [  d_{k+c+1}/d_{k+c} > y  ] = 1/y$.
Combined with~\eqref{eq:s-d-ratios} this yields~\eqref{eq_relative_speeds}. Similarly, we have
\begin{align*}
    \Pr \Bigl[ \frac{d_{k+1}}{d_k} > y_1, \,
    \frac{d_{k+2}}{d_{k+1}} > y_2 \Bigmid \cF_{\ell_{k+1}}  \Bigr]
    & = \Exp  \Bigl[ \1 { {d_{k+1}} /{d_k} > y_1 }  \Pr \Bigl[
    \frac{d_{k+2}}{d_{k+1}} > y_2 \Bigmid \cF_{\ell_{k+2}}  \Bigr] \Bigmid \cF_{\ell_{k+1}} \Bigr],
\end{align*}
so that, by two applications of dominated convergence,
\begin{align*}
    \lim_{k \to \infty} \Pr \Bigl[ \frac{d_{k+1}}{d_k} > y_1, \,
    \frac{d_{k+2}}{d_{k+1}} > y_2  \Bigr] & = \frac{1}{y_1} \cdot \frac{1}{y_2} .
\end{align*}
The same idea applies for arbitrary finite sequences.
A similar argument to the marginal result,  with an indicator $1_A$, followed by conditioning on $\{c=j\}$, handles the random shift for the whole finite vector, 
and hence
yields asymptotic independence, completing the proof of part~\ref{thm:RE_Sinai-iii}.

To prove part~\ref{thm:RE_Sinai-iv},
we will apply Donsker's theorem.
To describe the limit random variable,
consider 
a standard Brownian motion $(B_t, t\geq 0)$,
and define
\begin{align*}
\ell &: = \inf\big\{t>0: \max_{0\leq s\leq t}B_s-B_t=1\big\},\\
h & := \argmax_{t\in [0,\ell]} B_t,\\
m & := \inf\{t>\ell : B_t = B_h\},\\
u & := \argmin_{t \in [h,m] } B_t ,\\
d' & := \max_{0 \leq s \leq u} B_s - B_u , \\
\ell' & := \inf\big\{t>m: \max_{s\leq t}B_s-B_t=d'\big\},\\
h' & := \argmax_{t\in [m,\ell']} B_t;
\end{align*}
see Figure~\ref{f_Brownian_func}.
\begin{figure} 
\begin{center}
\includegraphics{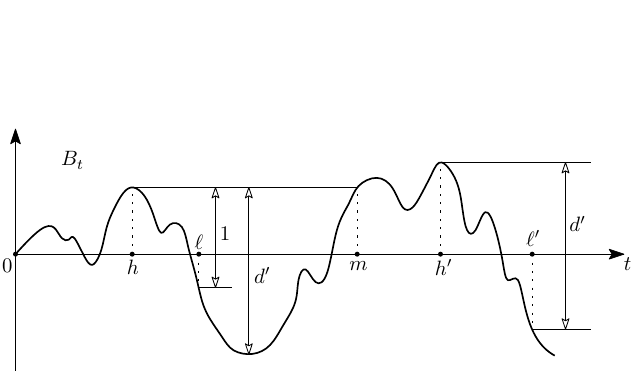}
\caption{On the proof of the convergence-in-distribution results in Theorem~\ref{thm:RE_Sinai}: definition of the auxiliary
random variables on a Brownian trajectory.}
\label{f_Brownian_func}
\end{center}
\end{figure} 
Then define the random variable
\begin{equation}
    \label{eq:Y-def}
    Y := \frac{h'-h}{(d')^2} .
\end{equation}
Note that if we replace the definition of $\ell$ by 
$\inf \{t>0: \max_{0\leq s\leq t}B_s-B_t= d \}$ for $d>0$,
the distribution of $Y$ is unchanged, by Brownian scaling.

Let $\sigma^2 := \IE[V(1)^2]$. We define a quantity like $Y$ for an arbitrary continuous trajectory $f$, which we call $\Phi (f)$. Formally, for continuous $f: \RP \to \R$ set $M_f (t) := \max_{0 \leq s \leq t} f(s)$ and   
\begin{align*}
\ell_f &: = \inf\big\{t>0: M_f (t) - f(t) =1\big\},\\
h_f & := \argmax_{t\in [0,\ell_f]} f(t),\\
m_f & := \inf\{t>\ell_f : f(t) = f(h_f) \},\\
u_f & := \argmin_{t \in [h_f,m_f] } f(t) ,\\
d'_f & := M_f (u_f) - f (u_f) , \\
\ell'_f & := \inf\big\{t>m_f: M_f(t) - f(t) = d'_f\big\},\\
h'_f & := \argmax_{t\in [m_f,\ell'_f]} f(t).
\end{align*}
Define $\Phi (f) := \frac{h'_f-h_f}{(d'_f)^2}$; then $Y = \Phi ( B)$.
When $f=B$ all the above quantities are finite and the relevant extrema are a.s.~unique. Moreover, passage-time-like quantities are genuine crossings of levels. It follows by the usual continuity of first passage times at genuine crossings and of $\argmax/\argmin$ at unique extrema that $\Phi$ is a.s.~continuous at $B$. 
For $c>0$, define $f_c$ by $f_c (t) := f (ct)$; then $h_{f_c} = h_f /c$, $h'_{f_c} = h'_f /c$ and $d'_{f_c} = d'_f$. Consequently $\Phi (f_c) = c^{-1} \Phi (f)$.
Thus Brownian scaling shows that $\Phi ( \sigma B )$ has the same distribution as $Y/\sigma^2$.

Now condition on $\cF_{m_{k-1}}$.  For $t\ge0$ write
\[
 j_k(t)=\lfloor d_{k-1}^2t\rfloor,
 \text{ and } \theta_k(t)=d_{k-1}^2t-j_k(t),
\]
and define the linearly interpolated process
\[
 W_k(t)=\frac{1}{d_{k-1}}\Big[
 V(m_{k-1}+j_k(t))-V(m_{k-1})
 +\theta_k(t)\{V(m_{k-1}+j_k(t)+1)-V(m_{k-1}+j_k(t))\}
 \Big].
\]
Since $m_{k-1}$ is a stopping time, the post-$m_{k-1}$ increments are fresh i.i.d.\ increments.  Since $d_{k-1}$ is $\cF_{m_{k-1}}$-measurable and $d_{k-1}\to\infty$ a.s. by 
Theorem~\ref{t_stable_h_k_k+1}\ref{t_stable_h_k_k+1-i}, the conditional form of Donsker's theorem gives $W_k\Rightarrow\sigma B$ (denoting weak convergence).  


For $\eps = (\eps_1,\eps_2,\eps_3) \in [0,1]^3$, let $\Phi_\eps$ be defined as $\Phi$ but with the passage levels $1$, $f(h_f)$, and $d'_f$ (in $\ell_f, m_f$, and $\ell'_f$, respectively) replaced by $1+\eps_1, f(h_f) +\eps_2,$ and $d'_f + \eps_3$. Thus $\Phi_0 = \Phi$. The argument above (genuine crossings,
unique extrema) shows that, a.s., 
$\sup_{\eps \in [0,\delta]^3} | \Phi_\eps (f) - \Phi (\sigma B) | \to 0$
as $f \to \sigma B$ locally uniformly and $\delta \to 0$.

The discrete construction starting at $m_{k-1}$
is the same successive well construction. Its first threshold is $d_{k-1}$,
its first maximum is $h_{k-1}$, the subsequent minimum has depth $d_k$, and the next
maximum is $h_k$. Since $W_k$ is linear between mesh points, its running descent 
is monotone between mesh points, and all extrema of $W_k$ are attained at mesh points. Hence
each first passage in the discrete construction (which occurs at a mesh point) coincides with the continuous first passage of $W_k$ at the level attained there. By~\eqref{unif_ellipticity}, this level exceeds the nominal one by at most $\kappa/d_{k-1}$. Thus
\[ \frac{H_k}{d_k^2} = \Phi_{\eps_k} (W_k) ,\]
for some $\eps_k \in [0,\kappa/d_{k-1}]^3$, and the continuous mapping theorem
gives $H_k/d_k^2 \Rightarrow \Phi ( \sigma B ) \eqd Y/\sigma^2$,
conditionally on $\cF_{m_{k-1}}$. The same argument shows $H_k/d_k^2 \Rightarrow Y/\sigma^2$ conditionally on $\cF'_{m_{k-1}}$, and then, as in the proof of part~\ref{thm:RE_Sinai-iii}, the conditional form of the convergence handles the shift $c$, so $H_{k+c}/d_{k+c}^2 \Rightarrow Y/\sigma^2$.
Finally, $S_k = H_{k+c}$, and, by 
 Theorem~\ref{t_stable_h_k_k+1}\ref{t_stable_h_k_k+1-iv},  $\log|s_k|^{-1}/d_{k+c}\to1$. Then Slutsky's theorem completes the proof of~\eqref{eq_sizes_depths}. All that remains is the explicit distribution for $Y$ given at~\eqref{eq:Y-distribution}: we give the proof of this in Lemma~\ref{lem:Y-distribution} below.
\end{proof}

Now we turn to the proof of Theorem~\ref{t_Sinai_particle}, the per-particle speed result.

\begin{proof}[Proof of Theorem~\ref{t_Sinai_particle}]
First we have from Theorem~\ref{thm:RE_Sinai}\ref{thm:RE_Sinai-i} that
$v_n = \lim_{t \to \infty} t^{-1} X_n (t) = s_k$ where $k$ is such that $n \in \mathcal{C}_k$, which, as explained in the proof of Theorem~\ref{thm:RE_Sinai},
means that, for all large enough $n$, $n \in [ h_{k'-1}+1, h_{k'}]$ for $k'$
with $| k - k'|$ bounded by an $\omega$-dependent constant. In particular, the strong law~\eqref{eq:slln-by-particle} follows from the strong law in  Theorem~\ref{thm:RE_Sinai}.
Theorem~\ref{t_stable_h_k_k+1}\ref{t_stable_h_k_k+1-iv} then implies that $v_n = -\re^{-d_{k'}(1+o(1))}$, a.s. Thus to complete the proof of the theorem
we must show that 
$n^{-1/2} d_{k'}$ converges in law, where $d_{k'}$ is
 the depth of the potential well containing $n$, and we must identify the limit distribution.

Let $B_t$ be a standard Brownian motion. 
We want to define a random variable $Z$ as the depth of the well (in trajectory $B$) that contains time~1. One cannot repeat the 
construction of quantities associated with successive potential wells
from Section~\ref{s_infinite_Sinai} in the Brownian setting, as there is no ``first'' well in that sense. Instead, we need a local definition of the depth of the given well. 
To this end, recall that $M_f (t) = \max_{0\leq s \leq t} f(s)$, and for continuous $f : \RP \to \R$, define
\[ \rho (f) := \inf \Bigl\{ t \geq 1 : f (t) > M_f (1) \Bigr\} \]
and then (see Figure~\ref{f_valley1})
\[
D_1 (f) := \sup_{0 \leq t \leq \rho(f)} \left\{ M_f (t) - f(t) \right\} .\]
\begin{figure} 
\begin{center}
\includegraphics{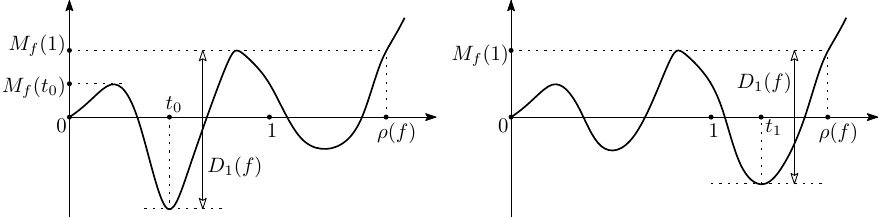}
\caption{On the definition of $D_1(f)$: on the left picture
the supremum is attained at $t_0<1$, while on the right 
picture it is attained at $t_1>1$ with $M_f(t_1)=M_f(1)$.}
\label{f_valley1}
\end{center}
\end{figure} 
Then set 
\begin{equation}
\label{df_Y_Sinai}
Z := D_1 (B) , \end{equation}
which we take as our definition of the depth of the Brownian well containing time~1.

Consider the linearly interpolated and scaled potential,
\[ \mathcal{V}_n (t) := n^{-1/2} \bigl( V ( \lfloor n t \rfloor ) +
(nt -  \lfloor n t \rfloor ) (V (\lfloor n t \rfloor +1) 
- V (\lfloor n t \rfloor ) ) \bigr) .\]
 Almost surely, $B_1<M_B(1)$, the first post-$1$ hitting time of $M_B(1)$ is finite, and Brownian motion crosses this level immediately.  It follows that $f \mapsto D_1(f)$ is a.s.~continuous at $f = B$.  Hence, for the linearly interpolated potential $\mathcal{V}_n$, Donsker's theorem and the continuous mapping theorem give
$ D_1(\mathcal{V}_n)\Rightarrow D_1(\sigma B)=\sigma Z$.

If $h_{k'-1}<n<h_{k'}$, Lemma~\ref{lem:construction} implies that the largest running 
descent before the first future record maximum after $n$ is exactly $d_{k'}$; since linear interpolation creates no new extrema,
$D_1(\mathcal{V}_n)=n^{-1/2}d_{k'}$.
Finally, 
every $h_k$ is a running-maximum time of the potential, so
\[
 \{n=h_k\text{ for some }k\}
 \subseteq
 \left\{V(n)=\max_{0\le j\le n}V(j)\right\}.
\]
By reversing the increments, the probability of this event equals
 the probability that a centered finite-variance random walk stays non-negative for $n$ steps, and hence 
$\Pr(n=h_k\text{ for some }k)=O(n^{-1/2})$.  Therefore 
if we set $Z_n := - n^{-1/2} \log | v_n |$ we have
$Z_n = (1+o(1)) n^{-1/2}d_{k'}\Rightarrow\sigma Z$.
It remains to identify the explicit distribution of $Z$ given at~\eqref{eq:Z-distribution}: this is
done in Lemma~\ref{lem:Z-distribution} below. 
\end{proof}

\subsection{Explicit limit distributions}
\label{sec:explicit_limits}

Let BES$(q)$ denote the Bessel process with dimension $q$.

\begin{lemma}
\label{lem:Y-distribution}
The distribution of $Y$ given by~\eqref{eq:Y-def} is the same as the first hitting time of~2 by a BES(3) process started at $0$, and hence has distribution function given by~\eqref{eq:Y-distribution}.
\end{lemma}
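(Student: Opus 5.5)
The plan is to read the construction in~\eqref{eq:Y-def} through the excursion theory of the drawdown process $R_t := M_B(t) - B_t$. By Lévy's theorem $R$ is a reflected Brownian motion, and its local time at $0$ is $M_B$. Its excursions away from $0$ form a Poisson point process indexed by the level $M_B$. The Itô measure $n$ of the set of excursions with height $\geq x$ is $1/x$, up to a normalising constant that I will fix by comparison with $|W|$ for a Brownian motion $W$.

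In this language the construction of $Y$ reads as follows. The time $h$ is the starting time of the first excursion of $R$ of height $\geq 1$. The time $m$ is its end, and $d'$ is its height, so $\Pr(d' > x \mid d' \geq 1) = 1/x$. The time $h'$ is the starting time of the first subsequent excursion of height $\geq d'$. Hence
\[
h' - h = (m - h) + (h' - m),
\]
where $m-h$ is the length of an excursion conditioned to have height exactly $d'$. The term $h'-m$ is the total duration of the excursions of height $< d'$ that occur before the first excursion of height $\geq d'$, counted from time $m$.

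By the strong Markov property at $m$ and the Poisson structure, conditionally on $d'$ these two pieces are independent. Brownian scaling by $d'^2$ then removes the dependence on $d'$, so $Y \eqd A + C$ with $A$ and $C$ independent, where:
\begin{itemize}
\item $A$ is the length of a Brownian excursion conditioned to have height $1$. By Williams' decomposition, $A$ is the sum of two independent hitting times of $1$ by BES$(3)$ started at $0$ (the ascent, and the time-reversed descent).
\item $C$ has the law of the time $h$ in the original construction (with threshold $1$). This is the time before the start of the first excursion of height $\geq 1$ of a reflected Brownian motion.
\end{itemize}

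Next I would apply the last-exit (Williams) decomposition of $T_1(|W|)$, the first hitting time of $1$ by a reflected Brownian motion, at its last zero before $T_1$. This gives $T_1(|W|) \eqd C + T_1^{\mathrm{BES}(3)}$, with the two terms independent. Combining,
\[
Y \eqd T_1(|W|) + T_1^{\mathrm{BES}(3)},
\]
again with independent summands. Now compare Laplace transforms, with $s=\sqrt{2\lambda}$:
\[
\Exp \re^{-\lambda T_1(|W|)} = \frac{1}{\cosh s}, \qquad \Exp \re^{-\lambda T_a^{\mathrm{BES}(3)}} = \frac{a s}{\sinh(a s)} .
\]
Therefore
\[
\Exp \re^{-\lambda Y} = \frac{s}{\cosh s \, \sinh s} = \frac{2s}{\sinh 2s},
\]
which is exactly the Laplace transform of the hitting time of $2$ by BES$(3)$ from $0$. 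The series~\eqref{eq:Y-distribution} then follows from the classical formula for BES$(3)$ hitting times. Alternatively, it follows by expanding $2s/\sinh 2s$ in partial fractions over the poles $s = i\pi m/2$ and inverting term by term, which produces the terms $\exp\{-\pi^2 m^2 y/8\}$ with alternating coefficients.

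The main obstacle is making the identification of $Y$ with $A+C$ rigorous at the level of path decompositions. One must check that $h$, $m$, $h'$ coincide with the excursion-start and excursion-end times described above; in particular, the argmax in the definition of $h$ is the left endpoint of the straddling excursion, and $h'$ is the left endpoint of the first excursion of height $\geq d'$. One must also check that conditioning on $d'$ and rescaling yields genuine independence of the two pieces. For the second piece, I would first use the Poisson point process description: the excursions of height $<d'$ occurring before the first excursion of height $\geq d'$ form a Poisson process independent of that first large excursion. Rescaling by $d'$ then shows that this second piece has the same law as $h$ in the original construction.
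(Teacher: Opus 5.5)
Your proof is correct and has the same skeleton as the paper's. Both split $Y=(m-h)/(d')^2+(h'-m)/(d')^2$ and use Williams's decomposition at the maximum for the first piece. Both then use the strong Markov property at the stopping time $m$, together with Brownian scaling, to get independence and to identify the second piece with the last zero $\sigma_1$ of the reflected motion before it first reaches $1$.

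The one real difference is how you handle that second piece. The paper quotes the Neveu--Pitman formula $\Exp \re^{-\lambda\sigma_1}=\tanh\sqrt{2\lambda}/\sqrt{2\lambda}$. You instead use the last-exit decomposition $T_1(|W|)\eqd\sigma_1+T_1^{\mathrm{BES}(3)}$ to absorb one of the two BES$(3)$ pieces of the excursion, reaching $Y\eqd T_1^{\mathrm{BES}(3)}+T_1(|W|)$. As a result you need only the elementary transforms $1/\cosh s$ and $s/\sinh s$ (with $s=\sqrt{2\lambda}$). In effect you re-derive the Neveu--Pitman transform as their ratio, so your argument is self-contained where the paper's is shorter but leans on a citation.

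Your intermediate identity even allows a transform-free finish. Split a BES$(3)$ path from $0$ at its hitting time of $1$. The remaining time to reach $2$ is that of a Brownian motion from $1$ conditioned to leave $(0,2)$ at $2$. By the reflection $x\mapsto 2-x$, this conditioning does not change the law of the exit time, which is that of $T_1(|W|)$.
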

\begin{proof}
Let $T_x$ denote the hitting time of $x$ by a BES(3) process started at $0$,
and note that
\begin{equation}
    \label{eq:BES-hitting}
    \Exp \re^{-\lambda T_x} =  \frac{x \sqrt{2\lambda}}{\sinh (x \sqrt{2\lambda})} ;
\end{equation}
see equation 2.0.1 of~\cite[p.~469]{borodin}.
    Recalling the definition of $Y$ from~\eqref{eq:Y-def}, and the quantities $m$, $h$, $h'$ and $d'$ defined prior to $Y$, decompose $Y = Y_1 + Y_2$
    where
    \[ Y_1 := \frac{m-h}{(d')^2}, \text{ and } Y_2 := \frac{h'-m}{(d')^2} .\]
    Condition on $d'$. Williams's decomposition of a Brownian excursion at its maximum says that, conditional on the maximum being $d'$, the part before the maximum and the time-reversal of the part after the maximum are independent BES(3) processes started from $0$ and stopped on first hitting $d'$. Hence, given $d'$, $m-h$ has the same law as $T_{d'} + T'_{d'}$ where $T_{x}'$ is an independent copy of $T_x$. By Bessel scaling, $T_{d'}$ has the same distribution as $(d')^2 T_1$. Hence $Y_1$ has the same law as $T_1 + T'_1$ meaning that by~\eqref{eq:BES-hitting}, for $\lambda >0$,
    \[ \Exp \re^{-\lambda Y_1} =   \frac{ {2\lambda}}{\sinh^2 \sqrt{2\lambda}}  .\]
    Now, conditional on $d'$, $(B_{m+t} - B_m)_{t \geq 0}$ is again a Brownian motion, and so $h'-m$ is the left endpoint of the first reflected-Brownian excursion of height at least $d'$. By Brownian scaling, $Y_2$ has the same distribution as $\sigma_1$, where,
    if $R_t$ denotes reflected Brownian motion and $\tau_1 := \inf \{ t > 0 : R_t =1 \}$, $\sigma_1 := \sup \{ s < \tau_1 : R_s = 0\}$, the left endpoint of the excursion containing $\tau_1$. The Laplace transform of $\sigma_1$ 
   follows from results of Neveu--Pitman~\cite{neveu} (for the explicit statement see e.g.~equation (5.9) of~\cite{orlandi}): for $\lambda >0$,
    \begin{equation}
    \label{eq:sigma-laplace} 
    \Exp \re^{-\lambda Y_2} = \Exp \re^{-\lambda \sigma_1} = \frac{\tanh \sqrt{2 \lambda}}{\sqrt{2\lambda}}. \end{equation}
    Since the law $Y_2$ conditional on $d'$ is independent of $d'$, the strong Markov property at time $m$ shows that $Y_2$ is independent of the Brownian motion up to time $m$, and, in particular, $Y_2$ is independent of $Y_1$. Then we get
    \[ \Exp \re^{-\lambda Y} = \frac{ {2\lambda}}{\sinh^2 \sqrt{2\lambda}} \cdot 
    \frac{\tanh \sqrt{2 \lambda}}{\sqrt{2\lambda}} = \frac{\sqrt{2\lambda}}{\sinh \sqrt{2\lambda} \cosh \sqrt{2\lambda}} 
    = \frac{2 \sqrt{2\lambda}}{\sinh ( 2\sqrt{2\lambda} )}  .\]
    Comparison with~\eqref{eq:BES-hitting} shows that $Y$ has the same law as $T_2$.
    The explicit form of the distribution function 
    at~\eqref{eq:Y-distribution} is given  in~\cite{hamana} (in their equation (2.7) take $\nu = 1/2$ so $j_{\nu,k}=k\pi$ and $J_{\nu+1} (j_{\nu,k}) = (-1)^{k+1} \sqrt{2/(\pi^2k)}$).
    \end{proof}

\begin{lemma}
    \label{lem:Z-distribution}
The random variable $Z$ as defined at~\eqref{df_Y_Sinai} 
has the distribution given at~\eqref{eq:Z-distribution}.
\end{lemma}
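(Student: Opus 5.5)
The plan is to reduce $Z=D_1(B)$ to a functional of reflected Brownian motion on $[0,1]$ plus one strong-Markov step at time $1$, and then evaluate with the eigenfunction expansion for Brownian motion killed on leaving a symmetric interval.

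\emph{Step 1: reduction to reflected Brownian motion.} Write $R_t := M_B(t)-B_t$. By Lévy's theorem, $R$ has the law of $|W|$ for a standard Brownian motion $W$. By definition of $\rho(B)$, the running maximum does not increase on $[1,\rho(B)]$, so $M_B(t)=M_B(1)$ there. Hence $\rho(B)$ is the first zero of $R$ after time $1$, which is a.s.\ finite with $R_1>0$. Consequently
\[ Z=\max\Bigl\{\sup_{0\le t\le 1}R_t,\ \sup_{1\le t\le \rho(B)} R_t\Bigr\}, \]
and for $z>0$ the event $\{Z\le z\}$ coincides, up to null sets, with the event that $R$ stays below $z$ on $[0,1]$ and then, started from $R_1$, hits $0$ before $z$.

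\emph{Step 2: strong Markov property at time $1$.} On $[1,\rho]$ the process $R$ evolves as $M_B(1)-B$, a Brownian motion run until it hits $0$. Given $R_1=r\in(0,z)$, the gambler's ruin formula gives probability $(z-r)/z$ of hitting $0$ before $z$. Therefore
\[ \Pr(Z\le z)=\Exp\Bigl[\Bigl(1-\tfrac{|W_1|}{z}\Bigr)\1{\sup_{0\le t\le1}|W_t|<z}\Bigr]. \]

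\emph{Step 3: explicit computation.} Use the transition density of $W$ started at $0$ and killed on exiting $(-z,z)$. Only eigenfunctions even about $0$ contribute, so the density at time $1$ is
\[ p_z(y)=\frac1z\sum_{m\ge0}\cos(\lambda_m y)\,\re^{-\lambda_m^2/2}, \qquad \lambda_m=\frac{(2m+1)\pi}{2z}. \]
Integrating against $1-|y|/z$ gives
\[ \int_{-z}^z (1-|y|/z)\cos(\lambda_m y)\,\ud y=\frac{2(1-\cos(\lambda_m z))}{\lambda_m^2 z}=\frac{2}{\lambda_m^2 z}, \]
since $\cos(\lambda_m z)=0$. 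The series converges absolutely (thanks to the Gaussian factors), so termwise integration is justified. Summing,
\[ \Pr(Z\le z)=\sum_{m\ge0}\frac{2}{\lambda_m^2z^2}\,\re^{-\lambda_m^2/2}=\frac{8}{\pi^2}\sum_{m\ge0}(2m+1)^{-2}\exp\Bigl\{-\frac{\pi^2(2m+1)^2}{8z^2}\Bigr\}, \]
which is~\eqref{eq:Z-distribution}. As a sanity check, letting $z\to\infty$ gives $\frac{8}{\pi^2}\sum_{m\ge0}(2m+1)^{-2}=1$.

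The main point needing care is Step 1. One must verify that the supremum defining $D_1$ over $[0,\rho(B)]$ splits exactly as claimed, namely that $M_B$ is constant on $[1,\rho]$ and $\rho$ is the first return of $R$ to $0$. One must also handle the boundary events $\{R_1=z\}$ and $\{\sup R=z\}$, which are null, so that strict versus non-strict inequalities do not matter. After that, the computation in Steps 2 and 3 is routine.
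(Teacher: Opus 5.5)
Your proof is correct, but it takes a different route from the paper's.

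\textbf{The paper's route.} It identifies $\{Z<a\}$ with $\{\sigma_a>1\}$, where $\sigma_a$ is the left endpoint of the first excursion of $R=M-B$ reaching height $a$. It then uses Brownian scaling $\sigma_a\eqd a^2\sigma_1$ and imports the Neveu--Pitman transform $\Exp \re^{-\lambda\sigma_1}=\tanh\sqrt{2\lambda}/\sqrt{2\lambda}$. This transform is expanded in partial fractions as $\sum_{m\ge0}(\lambda+\pi^2(2m+1)^2/8)^{-1}$ and inverted termwise, giving the density and hence the tail of $\sigma_1$.

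\textbf{Your route.} You split at the deterministic time $1$. The Markov property and gambler's ruin reduce the problem to $\Exp[(1-|W_1|/z)\1{\sup_{[0,1]}|W|<z}]$, which you evaluate with the Dirichlet heat kernel on $(-z,z)$. The even-mode kernel, the integral $2(1-\cos(\lambda_m z))/(\lambda_m^2 z)=2/(\lambda_m^2 z)$, and the constants $2/(\lambda_m^2z^2)=8/(\pi^2(2m+1)^2)$ all check out.

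\textbf{What each buys.} Your argument is self-contained: it needs no excursion theory, no external Laplace transform and no inversion-plus-uniqueness step, and it yields the series directly. Since $\Pr(\sigma_z>1)=\Pr(Z<z)$, it in effect re-derives the law of $\sigma_1$. The paper's route is shorter given the citation, and it economizes across the section. The same variable $\sigma_1$ and the same transform $\tanh\sqrt{2\lambda}/\sqrt{2\lambda}$ are already used for the component $Y_2$ in the proof of Lemma~\ref{lem:Y-distribution}, so one imported fact serves both explicit limit laws.
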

\begin{proof}
Let $M_t := \sup_{0 \leq s \leq t} B_s$ and $\rho := \inf \{ t \geq 1 : B_t > M_1\}$, so that $Z = \sup_{0 \leq t \leq \rho} (M_t - B_t)$. Here $R_t := M_t - B_t$ is reflected
Brownian motion. For $a>0$, let $\tau_a := \inf \{ t \geq 0 : R_t = a\}$ and $\sigma_a := \sup \{ s \leq \tau_a : R_s = 0\}$. We relate $Z$ to $\sigma_a$. A.s., $B_1 < M_1$, so time $1$ lies strictly inside an excursion of $R$ away from $0$. Brownian motion immediately crosses level $M_1$, and hence the infimum in the definition of $\rho$ is the terminal time of that excursion. 
We have $Z \geq a$ if and only if an excursion of $R$ whose height reaches $a$ has begun by time $1$: either such an excursion started earlier and finished by time $1$, or the excursion includes time $1$ and finishes at time $\rho$.
In other words, up to events of probability zero, $\{ Z < a \} = \{ \sigma_a > 1\}$. Brownian scaling implies that $\sigma_a$ has the same law as $a^2 \sigma_1$. Thus we get, for $z>0$,
\begin{equation}
\label{eq:Z-sigma}
    \Pr ( Z < z ) = \Pr ( \sigma_1 > z^{-2} ).
\end{equation}
As at~\eqref{eq:sigma-laplace} above, we have from~\cite{neveu} that, for $\lambda >0$,
\[ \Exp \re^{-\lambda \sigma_1} = \frac{\tanh \sqrt{2 \lambda}}{\sqrt{2 \lambda}} = \sum_{m=0}^\infty \frac{1}{\lambda+\pi^2 (2m+1)^2/8} .\]
Since every term is non-negative, Tonelli's theorem 
shows that the following density has the required Laplace transform
\[ f(t) = \sum_{m=0}^\infty \exp \left\{ - \frac{\pi^2 (2m+1)^2 t}{8} \right\} ,\]
and hence, by uniqueness, $f$ is the density of $\sigma_1$.
It follows that, for $t > 0$, 
\[ \Pr ( \sigma_1 > t ) = \frac{8}{\pi^2} \sum_{m=0}^\infty (2m+1)^{-2} \exp \left\{ - \frac{\pi^2 (2m+1)^2 t}{8} \right\} .\]
Combined with~\eqref{eq:Z-sigma} this yields~\eqref{eq:Z-distribution}.
\end{proof}

\section{Proofs in the Kesten--Kozlov--Spitzer regime}
\label{sec:proofs_KKS}

\begin{proof}[Proof of Theorem~\ref{thm:KKS<0}]
Define a $\Pr$-a.s.\ finite random variable 
$N=N(\omega)$
in the following way: 
\begin{equation}
\label{df_N_KKS}
 N := \min\bigl\{k\geq 1: V(j)<V(k-1)\text{ for all }
j\geq k \bigr\}.
\end{equation}
Note that $N-1$ is the last global maximum of $V$; since $V(0)=0$, this means that
\[ \{ N > k \} \subseteq \Bigl\{ \sup_{j \geq k} V(j) \geq 0 \Bigr\} .\]
Write $\xi_k := \log (a_k/b_k)$. Since $\xi_1$ is bounded (by~\eqref{ass:ue}) and has negative mean (by~\eqref{eq:KKS<0}), a Chernoff bound gives $\Pr ( V(j) \geq 0 ) \leq C \re^{-cj}$ and hence, for some $C' <\infty$ and $c' >0$,
\[ \Pr ( N>k) \leq \sum_{j \geq k} \Pr ( V(j) \geq 0 ) \leq C' \re^{-c' k} ,\]
as claimed.

Denote the shifted potential by $W(r) := V (N-1+r) - V(N-1)$, $r \geq 0$.
Then by definition of $N$, $W(r) <0$ for all $r \in \N$ and, by the strong law,
$W(r) /r \to \Exp \xi_1 <0$. The zero-speed
solution to the general traffic equation for the system $[N,\infty)$ 
(see Section 1.2 of~\cite{MPW25} for definitions)
is then $\alpha_r = \re^{W(r)}$ satisfying $0 < \alpha_r <1$ and
$\sum_{r \geq 1} \alpha_r < \infty$. Thus the zero-speed solution is admissible and summable. Theorem 1.12 of~\cite{MPW25} then shows that $[N,\infty)$ is intrinsically stable started from any finite configuration.

Since $[N,\infty)$ is intrinsically stable, a coupling argument shows that $[N,\infty)$ must be either an infinite stable cloud in the full system, or a subset of one, since
deleting the particles to the left can only increase the internal gap process of $[N,\infty)$, so intrinsic tightness gives the required internal tightness in the full system. 
If $N=1$, then the proof is complete. Otherwise, suppose from this point on that $N \geq 2$: to complete the proof we must show 
(see Definition~\ref{def:stable-cloud-infinite}) that $X_N(t) - X_{N-1}(t) \to \infty$ in probability. 

First suppose that $N-1$ is the unique time at which the global maximum for $V$ is attained. Then the rightmost cloud of the isolated finite system $[1,N-1]$ has strictly negative speed, say $s<0$. Using the basic couplings
(similarly to Proposition~\ref{prop:stoch-monotonicity}) $X_{N-1} (t) \leq X'_{N-1} (t)$ where~$X'$ is the system obtained by deleting all particles to the right of $N-1$, while $X_N (t) \geq X''_N (t)$ where~$X''$ is obtained by deleting all particles to the left of $N$. It follows that
\[ \limsup_{t \to \infty} t^{-1} X_{N-1} (t) \leq s < 0 ,\]
applying the strong law for the finite system $X'$ (Theorem 2.1 of~\cite{MMPW23})
whereas
\[ \liminf_{t \to \infty} t^{-1} X_{N} (t) \geq 0 ,\]
applying the strong law for the stable semi-infinite system $X''$ (Theorem 1.9 of~\cite{MPW25}). Thus $X_{N}(t) - X_{N-1} (t) \to \infty$ a.s.~(at linear speed) and hence also in probability.

It remains to consider the case where $N-1$ is \emph{not} the 
unique   global maximum for $V$. 
Put $H := V(N-1)$ and $j_0 := \min \{ j \geq 0 : V(j) = H\}$; 
we are in the case $j_0 < N-1$. Then $V(k) \leq H$ for $k \geq j_0$ with strict inequality
for every $k \geq N$. 

We use a truncation and approximation argument. For $M \geq 1$, consider the finite particle system $[1,M]$ with infinite pressure from the right, i.e., with $b_M$ set to $0$, and let $\eta^{(M)} (t) := (\eta^{(M)}_j (t))_{j \in [1,M-1]}$ be the associated gap process. Start from the same gaps as the infinite system, so $\eta_j ^{(M)} (0) = \eta_j (0)$ for all $j \in [1,M-1]$.
Using the second-class customer coupling from the proof of Lemma~\ref{lem:couple_infinite_pressure},
we may couple the infinite system with the finite system truncated at $M$ with infinite pressure from the right. Starting from the same queues on $[1,M-1]$, the customers of the truncated system may be regarded as first-class customers in the infinite system, with all additional customers regarded as second class. It follows that $\eta_j^{(M)} (t) \leq \eta_j (t)$ for all $j \in [1,M-1]$ and all $t \geq 0$. 

For $M \geq N$ consider the block $[j_0+1,M]$ with infinite pressure at $M$. Define
\[ A_k := \sum_{\ell =j_0+1}^k a_\ell^{-1} \exp \{ V(j_0) - V (\ell-1)\} , \text{ for } k \in [j_0+1,M].\]
The infinite-pressure speed of block $[j_0+1,M]$ is $\tilde v_M = - 1/A_M$ (this is the shifted version of
the formula~\eqref{eq:tilde_v_explicit_variant}). The corresponding traffic intensities are
\begin{equation}
\label{eq:rho-trucated}
\rho_k^{(M)} = \re^{V(k)-V(j_0)} + \tilde v_M \sum_{\ell=j_0+1}^k a_\ell^{-1} \re^{V(k)-V(\ell-1)}
= \re^{V(k)-V(j_0)} \left(1 - \frac{A_k}{A_M} \right) .\end{equation}
Here $\re^{V(k)-V(j_0)} \leq 1$, while $0 < A_k < A_M$ for $k < M$. Thus it holds that $  \rho_k ^{(M)} \in (0,1)$ for all $k \in [j_0+1,M-1]$. Hence $[j_0+1,M]$ with infinite pressure at $M$ is intrinsically stable (and $\tilde v_M$ is its speed). 

At the queue $N-1$, since $V(N-1)= V(j_0)$ formula~\eqref{eq:rho-trucated} reduces to 
\[ \rho^{(M)}_{N-1} = 1 - \frac{A_{N-1}}{A_M}. \]
Moreover, by hypotheses~\eqref{ass:ue} and~\eqref{eq:KKS<0}, $A_M \to \infty$ as $M \to \infty$. Therefore \[ \tilde v_M \uparrow 0, \text{ and } \rho_{N-1}^{(M)} \uparrow 1 .\]
We next argue that, for all sufficiently large $M$, the block $[j_0+1,M]$ is the rightmost stable cloud in the system $[1,M]$ with infinite pressure from the right. If $j_0=0$ there is nothing to prove; so suppose $j_0 >0$. By definition, $V(j) < V(j_0)$ for $0 \leq j < j_0$. In particular, the rightmost cloud
in the isolated finite system $[1,j_0]$ has strictly negative speed, $s<0$, say. The block $[j_0+1,M]$ has speed $\tilde v_M \to 0$, so for all $M$ large enough $s < \tilde v_M < 0$. The finite-cloud decomposition algorithm (Algorithm 1 from~\cite{MMPW23})
implies that the cloud decomposition of $[1,M]$ is given by the decomposition of $[1,j_0]$ followed by the single rightmost cloud $[j_0+1,M]$. Moreover, the queue separating these two parts is transient. Hence from some finite time onwards, the internal gaps of $[j_0+1,M]$ evolve exactly as the gaps in the isolated stable system $[j_0+1,M]$ with infinite pressure at $M$. Consequently, from Theorem 2.1 of~\cite{MMPW23} applied to that finite system,
\[ \lim_{t \to \infty} \Pr \left[ \eta_{N-1}^{(M)} (t) \leq B \right] = 1-  (\rho_{N-1}^{(M)})^{B+1} , \text{ for } B \in \ZP .\]
It follows from the coupling described earlier that, in the infinite system,
\[ \limsup_{t \to \infty} \Pr \left[ \eta_{N-1} (t) \leq B \right] \leq 1-  (\rho_{N-1}^{(M)})^{B+1} , \text{ for } B \in \ZP .\]
Take $M \to \infty$. Since $\rho_{N-1}^{(M)} \to 1$, we can conclude, for every fixed $B$,
$\Pr (\eta_{N-1} (t) \leq B ) \to 0$, as was required.
\end{proof}

\begin{remark}
\label{rem:KKS-IC}
We expect that in most cases the restriction on the initial condition in Theorem~\ref{thm:KKS<0} can be removed, but since there are some subtleties, and
the theory of non-finite initial conditions is not so developed~\cite{MPW25}, we do not pursue this extension, but make some observations here.

Indeed, suppose additionally that $\Pr [\xi_1 >0] >0$. Since $\Exp \xi_1 <0$, we have $\Pr [\xi_1 <0]>0$ also; i.e.~\eqref{ass:nestling} holds. Choose $\delta > 0$ such that $q:= \Pr [\xi_1 \geq \delta] >0$. For every given $m \in \N$ there are infinitely many $r \in \N$ for which $\xi_{r+1}, \ldots, \xi_{r+m}$ are all at least $\delta$. At the endpoint $k=r+m$ of such a run, $V (\ell ) \leq V(k)$ for $r+1 \leq \ell \leq k$.
The formula for $\beta_k$ in~\eqref{alpha_to_V} implies that,
under condition~\eqref{ass:ue},
\[
 \beta_k \geq \eps_0 \cdot \card\{\ell\leq k: V(\ell)\leq V(k)\},
\]
and so $\beta_k \geq \eps_0 m$. Since $m$ was arbitrary we get $\limsup_{k \to \infty} \beta_k = \infty$. 
Applying the same argument to the shifted environment on $[N,\infty)$, for every fixed $m$ there are a.s.\ infinitely many runs of $m$ consecutive non-negative (indeed, uniformly positive) increments. Hence the corresponding shifted $\beta_k$ is unbounded. Thus no $v>0$ can give an admissible solution. Nor can $v<0$, since $\alpha_k\to0$ while $\beta_k$ is bounded below away from zero. Consequently the zero-speed solution is the unique admissible solution. Hence we anticipate the initial condition can be general in that case.

The other case is the \emph{one-sided} case where $\Pr [ \xi_1 > 0 ] =0$.
Then $\xi_k \leq 0$, a.s.    Now $V$ is non-increasing
and $N = \min \{ k \geq 1 : \xi_k < 0\}$. Here there can be other admissible solutions:
the homogeneous case $a_k \equiv a < b \equiv b_k$ already shows this:
see Example 3.3 of~\cite{MPW25}. Thus this case may be more delicate with regards to extending the initial condition in Theorem~\ref{thm:KKS<0}.
\end{remark}

\begin{proof}[Proof of Theorem~\ref{thm:KKS>0}]
The main idea is the following. For the upper 
bound on the speed, we argue that one can find 
a trap of depth essentially $\beta\log k$
to the left of~$k$; then, even with infinite pressure,
the cloud that contains this trap cannot go faster
than about $\re^{-\beta\log k}=k^{-\beta}$.
For the lower bound, 
we consider the stable cloud containing the $k$th
particle, and, with a suitable refinement of
the arguments of Lemma~\ref{l_depth_speed},
prove that the magnitude of its speed 
cannot be much less than~$k^{-\beta}$.

The key
fact about the potential that we need is that the deepest trap
in the interval~$[0,k]$ has depth roughly $\beta\log k$
(as shown on Figure~\ref{f_KKS_trap}).
\begin{figure}
\begin{center}
\includegraphics{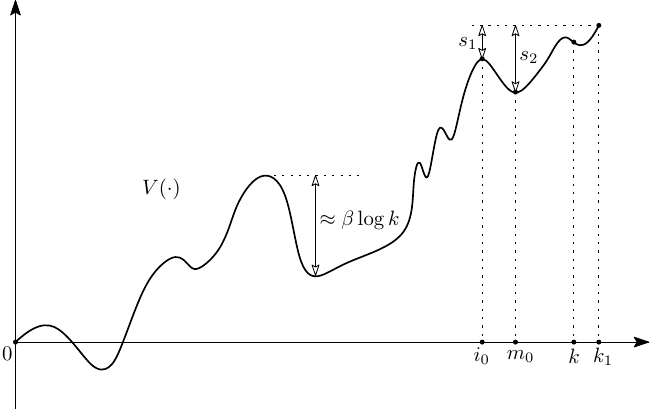}
\caption{On the proof of Theorem~\ref{thm:KKS>0}: the deepest trap
in~$[0,k]$ has depth about $\beta\log k$.}
\label{f_KKS_trap}
\end{center}
\end{figure}
As shown in Sections~2 and~3 of~\cite{FGP10},
one can obtain quite precise control
on the number of traps of given depth (note 
that the analysis of~\cite{FGP10} is in the case
$\IE\log\frac{\Theta^-}{\Theta^+}<0$, but one can obtain
the results of our interest by symmetry).
In any case, here we only need the following consequence
of the results in Section~3 (specifically, see Lemmas~3.4 and~3.5) of~\cite{FGP10}:
 for any $\delta>0$, 
\begin{equation}
\label{depth_control_KKS}
 (\beta-\delta)\log k \leq 
 \max_{[j_0,j_1]\subset [0,k]}\cD(j_0,j_1)
 \leq (\beta+\delta)\log k , 
 \text{ eventually a.s.}
\end{equation}

It follows from~\eqref{depth_control_KKS} applied at $\lfloor k/2 \rfloor$ that
$\max_{[j_0,j_1]\subset [0,\lfloor k/2 \rfloor ]}\cD(j_0,j_1)
\geq (\beta-\delta)\log k$ eventually. Choose a time $m \leq \lfloor k/2 \rfloor$ such that 
\[ \max_{0 \leq i \leq m} V(i) - V(m) = \max_{0 \leq j \leq \lfloor k/2 \rfloor} \max_{0 \leq i \leq j} ( V(i) - V(j) ) \geq (\beta-\delta)\log k.\] 
Then set $a:=\max \{ i \leq m : V(i) = \max_{0 \leq j \leq m} V(j) \}$.
Then $a < m$ and $V(j) < V(a)$ for $a < j \leq m$. The strong law
for $V$, with $\mu:= \Exp \log (\Theta^-/\Theta^+)  >0$
by~\eqref{eq:KKS>0}, implies that, $\Pr$-a.s.,
\[ \lim_{k \to \infty} \frac{1}{k} \max_{0 \leq j \leq \lfloor k/2 \rfloor} V(j) = \frac{\mu}{2}, \text{ and } \lim_{k \to \infty} \frac{V(k)}{k} = \mu .\]
Thus $V(k) > V(a)$ eventually.
Let $b:= \min\{j>m:V(j) \geq V(a)\}$. Thus we have shown existence of $a, b, m$ with $a < m< b \leq k$,
$V(j) < V(a)$ for $a < j < b$, $V(b) \geq V(a)$, and $V(a) - V(m) \geq (\beta-\delta) \log k$.
Thus by
Lemma~\ref{l_stable_almost0},
 we have that $[a+1,b]$ is intrinsically stable with
 depth $\cD (a, b) \geq (\beta-\delta) \log k$.

 Consider the finite system $[1,k']$ with $k' > k$ and infinite pressure at~$k'$. Then in the cloud decomposition of $[1,k']$, either $[a+1, b]$ and~$k$ are in the same cloud, or else~$k$ is in a cloud to the right of the cloud
 containing $[a+1,b]$. In either case, let~$w$ be the speed of the cloud containing $[a+1,b]$.
As in the proof of~\eqref{depth_speed_upper}, Proposition~\ref{prop:stoch-monotonicity}
gives a coupling under which the particles $[a+1,b]$ of $[1,k']$ are no farther 
to the left than in the system $[a+1,b]$ with infinite pressure at~$b$.
Hence $w\geq\tv$, where $\tv$ is the speed of the latter system, and 
$|\tv|\leq\eps_0^{-1}\re^{-\cD(a,b)}$ by~\eqref{depth_speed_upper}
(note that $V(b)=\max_{[a,b]}V$). 
Since the cloud speeds in $[1,k']$ are negative and non-decreasing from left to right, the magnitude of the speed 
of the stable cloud in $[1,k']$ containing~$k$ is at most~$\eps_0^{-1} k^{-(\beta-\delta)}$.

We next seek a lower bound on the magnitude of the speed. Unfortunately, Lemma~\ref{l_depth_speed} is not 
sharp enough in this case, because we are now aiming for
polynomial bounds in~$k$, and~\eqref{depth_speed_lower}
contains a potentially polynomial factor.
Therefore, to prove that $|v_k|$ is at least~$ck^{-(\beta+\delta)}$, we need to do a finer analysis of the 
formula~\eqref{V_v_explicit}.

Recall $\mu =\IE\log (\Theta^-/\Theta^+)>0$
and notice that there exist $u_0,c_0>0$ such that 
\begin{equation}
\label{can_go_up}
 \IP\Big[\log\frac{\Theta^-}{\Theta^+}>u_0\Big]>c_0.
\end{equation}
Then, we say that a site~$m$ is \emph{good}
if
\[
 V(m) \geq u_0 + \max_{0\leq j<m} V(j).
\]
There are typically plenty of good sites; this is a consequence
of the following elementary fact about ladder epochs. Denote 
by $L_1= \min\{n>0: V(n)>0\}$ the first ladder epoch.
Provided that $\mu>0$, usual large deviation estimates
imply that we have for some $c=c(\mu, \eps_0)$
\begin{equation}
\label{ladder_tail}
 \IP[L_1>t] \leq \re^{-ct}.
\end{equation}
Then, notice that at each ladder epoch, the next site 
will be good with probability at least~$c_0$. From this,
it is elementary to obtain that 
\begin{equation}
\label{many_good_sites}
\text{$\Pr$-a.s.\ for all~$k$ large enough, the interval }
 [k,k+\log^2 k]\text{ contains a good site.}
\end{equation}

Next, for an interval~$I$, define
\[
 \Psi(I) := \sum_{j\in I}\re^{V(m^I)-V(j)},
  \text{ where }m^I:=\argmin_{j\in I} V(j).
\]
Let us prove that
\begin{equation}
\label{not_so_large_Psi}
\text{$\Pr$-a.s.\ for all~$k$ large enough it holds that }
 \Psi(I) \leq 2\log^2 k \text{ for all }I\subset [0,2k].
\end{equation}
Indeed, let $I=[n_1,n_2]$, and assume that
$n_2-n_1\geq 2\log^2 k$ (otherwise~\eqref{not_so_large_Psi}
trivially holds). First, usual large deviation bounds
imply that
\[
 \IP\big[V(n_1+\log^2 k)-V(n_1)\geq \tfrac{2\mu}{3}
  \log^2 k\big] \geq 1 - \re^{-c_1\log^2 k}.
\]
Then, we also have
\[
 \IP\big[V(i)-V(n_1+\log^2 k)\geq -\tfrac{\mu}{3}
  \log^2 k \text{ for all }i\in[n_1+\log^2 k,n_2]\big] 
  \geq 1 - \re^{-c_2\log^2 k},
\]
and so (observe that $V(m^I)\leq V(n_1)$)
\begin{equation}
\label{LD_V_log2}
  \IP\big[V(i)-V(m^I)\geq \tfrac{\mu}{3}
  \log^2 k \text{ for all }i\in[n_1+\log^2 k,n_2]\big] 
  \geq 1 - 2\re^{-c_3\log^2 k}.
\end{equation}
Next, on the event in~\eqref{LD_V_log2}, 
for $I\subset [0,2k]$ we have that
\[
\Psi(I)\leq \log^2 k + |I|\exp(-\tfrac{\mu}{3}\log^2 k)
 \leq 2 \log^2 k,
\]
and therefore~\eqref{not_so_large_Psi} follows from~\eqref{LD_V_log2} with Borel--Cantelli and the fact that there are $O(k^2)$ choices for interval~$I \subset [0,2k]$.

Now, we are ready to obtain a lower bound on~$|v_k|$.
Let~$k_1\geq k$ be the smallest good site one can find
in $[k,\infty)$; observe that, by~\eqref{many_good_sites},
we have $k_1\leq k+\log^2 k$.
Then, consider the finite system with particles
$\{1,\ldots,k_1\}$ and let $[i_0+1,k_1]$ be
the stable cloud 
containing~$k_1$ (see Figure~\ref{f_KKS_trap}),
let~$\hv$ be its speed.
Let $m_0 = \argmin_{j\in [i_0,k_1]}V(j)$, and abbreviate
$s_1=V(k_1)-V(i_0)$, $s_2=V(k_1)-V(m_0)$. 
 From~\eqref{V_v_explicit} and using that $s_1\geq u_0$
(because $k_1$ is good) together with Condition~\eqref{ass:ue},
we obtain that
\begin{align*}
 |\hv| &=\frac{\re^{V(k_1)-V(i_0)}-1}
{\frac{1}{a_{k_1}}\re^{V(k_1)-V(k_1-1)}
  +\cdots + \frac{1}{a_{i_0+1}}\re^{V(k_1)-V(i_0)}}\\
 &\geq  \frac{C \re^{s_1}}{\eps_0^{-1}\re^{s_2}\Psi([i_0,k_1])}\\
 &\geq C' \re^{-(s_2-s_1)} \log^{-2}k.
\end{align*}
Since $s_2-s_1\leq \cD(0,2k)$, 
and finite-system cloud speeds are non-decreasing from left to right, 
this implies the desired
lower bound, i.e., that in the system $[1,k_1]$ the cloud containing $k$ has speed of magnitude at least $c k^{-(\beta+\delta)}$.

Next we argue that in the infinite system there must be infinitely many clouds. Let $k_1$ be the good site used above, and put $M=k^2$. Denote particle speeds in the system $[1,M]$ with infinite pressure by $w_j^{(M)}$. Removing particles to the right of $k_1$, and releasing the pressure, can only increase the positions of the retained particles. Hence the lower-magnitude estimate in the ordinary system $[1,k_1]$ gives
$ w_k^{(M)}\le -c k^{-(\beta+\eps)}$.
On the other hand,  the trap estimate at scale $M$ gives
\[ w_M^{(M)}\ge -C M^{-(\beta-\varepsilon)} =-Ck^{-2(\beta-\varepsilon)}.\]
Choose $0<\varepsilon<\beta/3$. Eventually, $ w_k^{(M)}<w_M^{(M)}$.
Therefore there is a strict increase in cloud speed across some queue with index in $[k,M-1]$. That queue is transient. Since $k$ is arbitrarily large, these transient queues occur arbitrarily far to the right; extracting increasing sequences gives exactly the hypothesis of Corollary~\ref{cor:inf_finite_queues}. 

To finish the proof, we apply Lemma~\ref{lem:couple_infinite_pressure} to show that $t^{-1} X_k(t)$ has a limit and the finite-system speed estimates derived above apply to the resulting speed $v_k$.
Indeed,
for fixed $k$, choose one of the finite infinite-pressure truncations having a transient separating queue strictly to the right of $k$.  Lemma~\ref{lem:couple_infinite_pressure}  then identifies the dynamics on the clouds to its left, after finite time, with those of that finite system (up to an irrelevant spatial translation), so the finite-system strong law gives the strong law for particle $k$. The infinite-pressure truncation gives the lower bound on $v_k$, while removing particles to the right and using the ordinary $[1,k_1]$ system gives the upper bound on $v_k$. Hence, for every $\varepsilon>0$, $-k^{-(\beta-\eps)} \leq v_k \leq -k^{-(\beta+\eps)}$, as claimed.
\end{proof}

\appendix
\section{Ladder height condition}
\label{sec:lh}

An example where Condition~\eqref{ass:lh} trivially
holds is when the distribution 
of~$\log (\Theta^-/\Theta^+)$ is lattice, 
because in that case~$\eta$ will always be 
at least the mesh of the lattice. Another example is provided 
under a bounded density hypothesis, as in the following result.
Condition~\eqref{ass:lh} can fail, even under~\eqref{ass:ue} and~\eqref{ass:nestling}, for example by taking $\Theta^- = \re^{X/2}$ and $\Theta^+ = \re^{-X/2}$ where $X$ has a distribution with a sequence of atoms going rapidly to $0$ from above. 
 
\begin{lemma}
    \label{lem:lh}
    Suppose that~\eqref{ass:nestling} and~\eqref{ass:ue} hold. Then sufficient for~\eqref{ass:lh} to hold is either condition (a)~that the distribution of~$\log\frac{\Theta^-}{\Theta^+}$
    under $\IP$ has a bounded density, or (b)~that $\Theta^+$ and $\Theta^-$ are independent and each has a bounded density under $\IP$.
\end{lemma}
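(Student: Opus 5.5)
Write $X := \log(\Theta^-/\Theta^+)$, so $V$ is a random walk with i.i.d.\ increments distributed as $X$. Let $F^+$ be the law of the positive part of the increments. The plan is to decompose the first ladder height $\eta$ according to the last pre-ladder position, and show that $\Pr[\eta<y]$ is at most a constant times $y$. This gives \eqref{ass:lh} with $\delta_0=1$.

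First I reduce case (b) to case (a). Under (b), the variable $\log\Theta^-$ has density $f_-(e^s)e^s$, which is bounded because of \eqref{ass:ue} (the factor $e^s$ is bounded on the support). The same holds for $-\log\Theta^+$. Then $X$ is the sum of two independent variables, each with a bounded density, so its density is a convolution and is bounded by $\|g_1\|_\infty\int g_2 = \|g_1\|_\infty$. So it suffices to treat (a): $X$ has a density $f\le K$.

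Next comes the ladder decomposition. The event $\{\eta<y\}$ occurs only if some $n\ge1$ satisfies $V_j\le 0$ for all $j<n$ and $V_n\in(0,y)$. Let $U(\ud z)$ be the renewal-type measure $\sum_{n\ge0}\Pr[V_n\in \ud z,\ V_j\le 0\ \forall j\le n]$ on $(-\infty,0]$ (the strict descending ladder renewal measure, by duality). Then
\[
\Pr[\eta<y]=\int_{(-\infty,0]}U(\ud z)\,\Pr[X\in(-z,-z+y)] .
\]
Since $X$ has density bounded by $K$ and $|X|\le\kappa$ by \eqref{unif_ellipticity}, the integrand is at most $Ky$ and vanishes unless $z\ge-\kappa$. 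Therefore
\[
\Pr[\eta<y]\le Ky\,U([-\kappa,0]).
\]

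The main obstacle is to show $U([-\kappa,0])<\infty$. I would use duality, namely that $U$ equals the renewal measure of the weak descending ladder heights. The renewal theorem (or simply subadditivity of renewal measures) then gives $U([-\kappa,0])\le C(1+\kappa)$. Nondegeneracy of the descending ladder heights, which holds by \eqref{ass:nestling} since $\Pr[X<0]>0$, makes the expected number of renewals in a bounded interval finite.

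An alternative, avoiding duality, is a direct argument. From any point in $[-\kappa,0]$, with probability at least $p>0$ the walk leaves $[-\kappa,0]$ within a bounded number of steps. This uses \eqref{ass:nestling} together with the absolutely continuous law: take steps $\ge c$ with positive probability. Once the walk has left the interval, it must either exit upward, which ends the count, or go below $-\kappa$. Re-entry into $[-\kappa,0]$ from below is possible, but the number of re-entries is controlled by the same geometric argument applied to the descending ladder structure. I expect the duality route to be cleaner and would present that one.

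Combining the two displays gives $\Pr[\eta\ge y]\ge 1-Cy$, which is \eqref{ass:lh} with $\delta_0=1$.
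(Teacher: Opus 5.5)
Your proof is correct, but it takes a different route from the paper's. The paper works with the $n$-step densities. A characteristic-function argument (Plancherel, $|\phi(t)|\le \re^{-ct^2}$ near $0$, Riemann--Lebesgue away from $0$) gives $\|f_n\|_\infty=O(n^{-1/2})$. This is inserted into the Alili--Doney form of the Wiener--Hopf identity, $\Pr[\eta<y]\le\sum_{n\ge1}n^{-1}\Pr[0<V_n<y]\le Cy\sum_{n}n^{-3/2}$.

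You instead decompose at the position just before the ladder epoch. Only the one-step density bound $K$ then enters, and \eqref{ass:ue} reduces everything to finiteness of the pre-ladder occupation measure $U([-\kappa,0])$. Your duality identification of $U$ with the descending ladder renewal measure is right; with a density, weak and strict ladder heights coincide a.s., so your inconsistent wording there is harmless.

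This is considerably more elementary: no Fourier analysis, no local limit bound, no Wiener--Hopf theory. Your ``alternative'' direct argument also works, and more simply than you suggest, since re-entries from below need no tracking. Choose $c>0$ with $q:=\Pr[X\ge c]>0$ (possible by \eqref{ass:nestling}) and $m>\kappa/c$. From every visit to $[-\kappa,0]$, the walk then exceeds $0$ within $m$ steps with probability at least $q^m$. Applying the strong Markov property at successive visits separated by at least $m$ steps gives $U([-\kappa,0])\le m q^{-m}$, with no duality needed.

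What the paper's route buys is generality. It uses bounded support only to ensure $f_1\in L^2$, so it transfers more readily to unbounded finite-variance increments, whereas your localization to $[-\kappa,0]$ uses \eqref{ass:ue} essentially.

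Your reduction of (b) to (a), via the bounded densities of $\log\Theta^-$ and $-\log\Theta^+$ and the bound $\|g_1*g_2\|_\infty\le\|g_1\|_\infty$, is an equivalent replacement for the paper's Jacobian computation. Both routes yield \eqref{ass:lh} with $\delta_0=1$.
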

\begin{proof}
Let $\zeta := \log \frac{\Theta^-}{\Theta^+}$. First we prove the lemma under condition (a). Suppose that~\eqref{ass:nestling} and~\eqref{ass:ue} hold, and that $\zeta$ has a bounded density. 
Let $f_n$ denote the density of  $V_n$.
Then $0 < \Var \zeta < \infty$ and (since $f_1$ is bounded over a bounded support) $f_1 \in L^2 (\R)$. Let $\phi (t) := \Exp \re^{it \zeta}$;
then $\phi \in L^2(\R)$ by Plancherel's theorem.
If $\zeta'$ is an independent copy of $\zeta$, then $| \phi (t) |^2 = \Exp \re^{it(\zeta-\zeta')} = 1 - t^2 \Var \zeta + o(t^2)$ as $t \to 0$. Hence for some $c, \delta >0$, $| \phi (t) | \leq  \re^{-ct^2}$ for all $|t| <\delta$. Moreover, $| \phi (t) |<1$ for every $t \neq 0$ (see~\cite[p.~18]{lukacs}) and the Riemann--Lebesgue lemma \cite[p.~19]{lukacs} says that $\phi(t) \to 0$ as $|t| \to \infty$. Hence $\rho := \sup_{|t| \geq \delta} | \phi(t) | <1$. For $n \geq 2$,
\[ \int_\R | \phi(t) |^n \ud t \leq \int_{|t| \leq \delta} \re^{-c n t^2} \ud t + \rho^{n-2} \int_{|t|> \delta} | \phi (t) |^2 \ud t
= O (n^{-1/2} ) + O( \rho^{n-2} ) = O (n^{-1/2} ).\]
 Thus  by Fourier inversion (see Theorem~3.2.2 of~\cite{lukacs})
\[ 
\| f_n \|_\infty \leq \frac{1}{2\pi} \int_\R | \phi (t) |^n \ud t = O (n^{-1/2} ) ,
\]
as $n \to \infty$. 
Formula~(3) (with~$k=1$) of~\cite{AD99}
implies that
\[
 \IP[\eta<y] 
  = \sum_{n=1}^\infty \frac{1}{n}
   \IP\big[0<V_n\leq \eta, \, V_n<y\big]
  \leq  \sum_{n=1}^\infty \frac{1}{n} 
     \IP\big[0<V_n<y\big] ,
\]
where, for some $C < \infty$ and all $n \in \N$ (recall $\| f_1 \|_\infty < \infty$ by hypothesis),
\[ \IP\big[0<V_n<y\big] = \int_0^y f_n (z) \ud z \leq y \| f_n \|_\infty \leq C y n^{-1/2} .\]
Thus we get $\IP[\eta<y] = O (y)$ as $y \downarrow 0$, which verifies~\eqref{ass:lh}.

Finally we show that condition (b) implies (a).
Suppose that $\Theta^-$ and $\Theta^+$ are independent and have densities $f^-$ and $f^+$ respectively, and that, for some $B < \infty$, $f^+ (x) \leq B$ and $f^-(x) \leq B$ for all $\eps_0 \leq x \leq \eps_0^{-1}$ (recall that~\eqref{ass:ue} is assumed). Then a Jacobian calculation shows that~$\zeta$ has density~$f$ given by
\[ f (y) = \re^{-y} \int_{\eps_0}^{\eps_0^{-1}} u f^- (u) f^+ ( u \re^{-y} ) \ud u \leq \re^{-y} B^2 \eps_0^{-2} ,\]
showing that $f$ is bounded.
\end{proof}

\section*{Acknowledgements}

The authors thank Vladislav Vysotskiy for 
discussions about ascending ladder heights and 
for indicating the paper~\cite{AD99} to them.
ChatGPT was used to help identify the explicit distributions of the limit random variables~$Y$ 
and~$Z$, to help with the proof of Lemma~\ref{lem:lh}, and to identify some errors in earlier drafts; 
all other mathematical arguments and proofs are entirely human-generated. 
MM and AW were supported by EPSRC grant EP/W00657X/1.
SP was partially supported by
CMUP, member of LASI, which is financed by national funds
through FCT --- Funda\c{c}\~ao
para a Ci\^encia e a Tecnologia, I.P., 
under the project with reference UID/00144/2025,
\url{https://doi.org/10.54499/UID/00144/2025}.
Some of this work was carried
 out during the programme ``Stochastic systems for anomalous diffusion'' (July--December 2024) hosted by the  Isaac Newton Institute, under EPSRC grant EP/Z000580/1.

\printbibliography

@article{jsi,
  title = {Partially asymmetric exclusion processes with sitewise disorder},
  author = {Juh\'asz, R\'obert and Santen, Ludger and Igl\'oi, Ferenc},
  journal = {Phys. Rev. E},
  volume = {74},
  pages = {061101},
  numpages = {10},
  year = {2006},
  publisher = {American Physical Society},
  doi = {10.1103/PhysRevE.74.061101},
  url = {https://link.aps.org/doi/10.1103/PhysRevE.74.061101}
}

@book {liggett,
    AUTHOR = {Liggett, Thomas M.},
     TITLE = {Interacting particle systems},
 PUBLISHER = {Springer-Verlag}, 
   ADDRESS = {Berlin},
      YEAR = {2005},
     PAGES = {xvi+496},
      ISBN = {3-540-22617-6},
   MRCLASS = {60-02 (60K35 82C22)},
  MRNUMBER = {2108619},
MRREVIEWER = {Michael\ Pr\"ahofer},
       DOI = {10.1007/b138374},
       URL = {https://doi.org/10.1007/b138374},
}

@article {golosov,
    AUTHOR = {Golosov, A. O.},
     TITLE = {Limit distributions for random walks in random environments},
   JOURNAL = {Sovet Math. Dokl.},
    VOLUME = {28},
      YEAR = {1983},
    NUMBER = {1},
     PAGES = {18--22},
}

@article {kesten,
    AUTHOR = {Kesten, Harry},
     TITLE = {The limit distribution of {S}ina\u i's random walk in random
              environment},
   JOURNAL = {Phys. A},
  FJOURNAL = {Physica A},
    VOLUME = {138},
      YEAR = {1986},
    NUMBER = {1-2},
     PAGES = {299--309},
      ISSN = {0378-4371},
   MRCLASS = {60J15 (82A42)},
  MRNUMBER = {865247},
MRREVIEWER = {Gregory\ F.\ Lawler},
       DOI = {10.1016/0378-4371(86)90186-X},
       URL = {https://doi.org/10.1016/0378-4371(86)90186-X},
}

@article {hamana,
    AUTHOR = {Hamana, Yuji and Matsumoto, Hiroyuki},
     TITLE = {The probability distributions of the first hitting times of
              {B}essel processes},
   JOURNAL = {Trans. Amer. Math. Soc.},
  FJOURNAL = {Transactions of the American Mathematical Society},
    VOLUME = {365},
      YEAR = {2013},
    NUMBER = {10},
     PAGES = {5237--5257},
      ISSN = {0002-9947,1088-6850},
   MRCLASS = {60J60 (33C10 44A10)},
  MRNUMBER = {3074372},
MRREVIEWER = {Samuel\ Herrmann},
       DOI = {10.1090/S0002-9947-2013-05799-6},
       URL = {https://doi.org/10.1090/S0002-9947-2013-05799-6},
}

@book {borodin,
    AUTHOR = {Borodin, Andrei N. and Salminen, Paavo},
     TITLE = {Handbook of {B}rownian motion---facts and formulae},
    SERIES = {Probability and its Applications},
   EDITION = {Second},
 PUBLISHER = {Birkh\"auser Verlag, Basel},
      YEAR = {2002},
     PAGES = {xvi+672},
      ISBN = {3-7643-6705-9},
   MRCLASS = {60-00 (60H05 60J25 60J55 60J60 60J65)},
  MRNUMBER = {1912205},
MRREVIEWER = {S\'andor\ Cs\"org\H o},
       DOI = {10.1007/978-3-0348-8163-0},
       URL = {https://doi.org/10.1007/978-3-0348-8163-0},
}

@incollection {neveu,
    AUTHOR = {Neveu, J. and Pitman, J.},
     TITLE = {Renewal property of the extrema and tree property of the
              excursion of a one-dimensional {B}rownian motion},
 BOOKTITLE = {S\'eminaire de {P}robabilit\'es, {XXIII}},
    SERIES = {Lecture Notes in Math.},
    VOLUME = {1372},
     PAGES = {239--247},
 PUBLISHER = {Springer}, 
   ADDRESS = {Berlin},
      YEAR = {1989},
      ISBN = {3-540-51191-1},
   MRCLASS = {60J65},
  MRNUMBER = {1022914},
MRREVIEWER = {L.\ C. G. Rogers},
       DOI = {10.1007/BFb0083976},
       URL = {https://doi.org/10.1007/BFb0083976},
}

@article {orlandi,
    AUTHOR = {Orlandi, Enza and Picco, Pierre},
     TITLE = {One-dimensional random field {K}ac's model: weak large
              deviations principle},
   JOURNAL = {Electron. J. Probab.},
  FJOURNAL = {Electronic Journal of Probability},
    VOLUME = {14},
      YEAR = {2009},
     PAGES = {no. 47, 1372--1416},
      ISSN = {1083-6489},
   MRCLASS = {60K35 (60F10 60K37 82B43)},
  MRNUMBER = {2511287},
MRREVIEWER = {Jean-Baptiste\ Bardet},
       DOI = {10.1214/ejp.v14-662},
       URL = {https://doi.org/10.1214/ejp.v14-662},
}

@book {lukacs,
    AUTHOR = {Lukacs, Eugene},
     TITLE = {Characteristic functions},
   EDITION = {2nd ed.},
 PUBLISHER = {Hafner Publishing Co.}, 
   ADDRESS = {New York},
      YEAR = {1970},
     PAGES = {x+350},
   MRCLASS = {60E05 (62EXX)},
  MRNUMBER = {346874},
}

@article {evans,
    AUTHOR = {Evans, M. R.},
     TITLE = {Exact steady states of disordered hopping particle models with
              parallel and ordered sequential dynamics},
   JOURNAL = {J. Phys. A},
  FJOURNAL = {Journal of Physics. A. Mathematical and General},
    VOLUME = {30},
      YEAR = {1997},
    NUMBER = {16},
     PAGES = {5669--5685},
      ISSN = {0305-4470,1751-8121},
   MRCLASS = {82C44},
  MRNUMBER = {1478380},
MRREVIEWER = {Dimitri\ Petritis},
       DOI = {10.1088/0305-4470/30/16/011},
       URL = {https://doi.org/10.1088/0305-4470/30/16/011},
}

@article {evans96,
    AUTHOR = {Evans, M. R.},
     TITLE = {Bose-Einstein condensation in disordered exclusion models and relation to traffic flow},
   JOURNAL = {Europhys. Lett.},
    VOLUME = {36},
      YEAR = {1996},
    NUMBER = {1},
     PAGES = {13--18},
}

@book {revesz,
    AUTHOR = {R\'ev\'esz, P\'al},
     TITLE = {Random walk in random and non-random environments},
   EDITION = {3rd edition},
 PUBLISHER = {World Scientific Publishing Co.}, 
   address = {Hackensack, NJ},
      YEAR = {2013},
     PAGES = {xviii+402},
      ISBN = {978-981-4447-50-8},
   MRCLASS = {60G50 (60F15 60F17 60J55 60K37)},
  MRNUMBER = {3060348},
MRREVIEWER = {Ofer\ Zeitouni},
       DOI = {10.1142/8678},
       URL = {https://doi.org/10.1142/8678},
}

@article {frs,
    AUTHOR = {Floreani, Simone and Redig, Frank and Sau, Federico},
     TITLE = {Hydrodynamics for the partial exclusion process in random
              environment},
   JOURNAL = {Stochastic Process. Appl.},
  FJOURNAL = {Stochastic Processes and their Applications},
    VOLUME = {142},
      YEAR = {2021},
     PAGES = {124--158},
      ISSN = {0304-4149,1879-209X},
   MRCLASS = {60K37 (60F17 60K35)},
  MRNUMBER = {4314096},
       DOI = {10.1016/j.spa.2021.08.006},
       URL = {https://doi.org/10.1016/j.spa.2021.08.006},
}

@article {faggionato,
    AUTHOR = {Faggionato, Alessandra},
     TITLE = {Random walks and exclusion processes among random conductances
              on random infinite clusters: homogenization and hydrodynamic
              limit},
   JOURNAL = {Electron. J. Probab.},
  FJOURNAL = {Electronic Journal of Probability},
    VOLUME = {13},
      YEAR = {2008},
     PAGES = {no. 73, 2217--2247},
      ISSN = {1083-6489},
   MRCLASS = {60K35 (60J27 82C44)},
  MRNUMBER = {2469609},
MRREVIEWER = {Da-Quan\ Jiang},
       DOI = {10.1214/EJP.v13-591},
       URL = {https://doi.org/10.1214/EJP.v13-591},
}

@article {zerner,
    AUTHOR = {Zerner, Martin P. W.},
     TITLE = {Lyapounov exponents and quenched large deviations for
              multidimensional random walk in random environment},
   JOURNAL = {Ann. Probab.},
  FJOURNAL = {The Annals of Probability},
    VOLUME = {26},
      YEAR = {1998},
    NUMBER = {4},
     PAGES = {1446--1476},
      ISSN = {0091-1798,2168-894X},
   MRCLASS = {60F10 (60G50 60K37 82C41)},
  MRNUMBER = {1675027},
MRREVIEWER = {Wolfgang\ K\"onig},
       DOI = {10.1214/aop/1022855870},
       URL = {https://doi.org/10.1214/aop/1022855870},
}

@Inbook{Zeitouni2004,
author="Zeitouni, Ofer",
editor="Picard, Jean",
title="Part II: Random Walks in Random Environment",
bookTitle="Lectures on Probability Theory and Statistics: Ecole d'Et{\'e} de Probabilit{\'e}s de Saint-Flour XXXI - 2001",
year="2004",
publisher="Springer",
address="Berlin, Heidelberg",
pages="190--312",
isbn="978-3-540-39874-5",
doi="10.1007/978-3-540-39874-5_2",
url="https://doi.org/10.1007/978-3-540-39874-5_2"
}

@article {solomon,
    AUTHOR = {Solomon, Fred},
     TITLE = {Random walks in a random environment},
   JOURNAL = {Ann. Probability},
  FJOURNAL = {The Annals of Probability},
    VOLUME = {3},
      YEAR = {1975},
     PAGES = {1--31},
      ISSN = {0091-1798},
   MRCLASS = {60J15},
  MRNUMBER = {362503},
MRREVIEWER = {V.\ K.\ Rohatgi},
       DOI = {10.1214/aop/1176996444},
       URL = {https://doi.org/10.1214/aop/1176996444},
}

@article {AD99,
    AUTHOR = {Alili, L. and Doney, R. A.},
     TITLE = {Wiener-{H}opf factorization revisited and some applications},
   JOURNAL = {Stochastics Stochastics Rep.},
  FJOURNAL = {Stochastics and Stochastics Reports},
    VOLUME = {66},
      YEAR = {1999},
    NUMBER = {1-2},
     PAGES = {87--102},
      ISSN = {1045-1129},
   MRCLASS = {60K05 (60G50)},
  MRNUMBER = {1687803},
       DOI = {10.1080/17442509908834187},
       URL = {https://doi.org/10.1080/17442509908834187},
}

@article {FGP10,
    AUTHOR = {Fribergh, Alexander and Gantert, Nina and Popov, Serguei},
     TITLE = {On slowdown and speedup of transient random walks in random
              environment},
   JOURNAL = {Probab. Theory Related Fields},
  FJOURNAL = {Probability Theory and Related Fields},
    VOLUME = {147},
      YEAR = {2010},
    NUMBER = {1-2},
     PAGES = {43--88},
      ISSN = {0178-8051,1432-2064},
   MRCLASS = {60K37 (60F10 60G50)},
  MRNUMBER = {2594347},
MRREVIEWER = {Marcel\ Ortgiese},
       DOI = {10.1007/s00440-009-0201-2},
       URL = {https://doi.org/10.1007/s00440-009-0201-2},
}

@article{MPW25_elastic,
    author = {Menshikov, Mikhail and Popov, Serguei and Wade, Andrew},
    title = {One-dimensional particle clouds with elastic collisions},
    journal = {Preprint, ArXiv: \href{https://arxiv.org/abs/2509.08430}{2509.08430}},
    year = {2025},
}

@article {MPW25,
    AUTHOR = {Menshikov, M. and Popov, S. and Wade, A.},
     TITLE = {Semi-infinite particle systems with exclusion interaction and heterogeneous jump rates},
   JOURNAL = {Probab. Theory Relat. Fields},
   VOLUME = {195},
   PAGES = {73-107},
      YEAR = {2026},
}

@article{MPW26_leftmost,
    author = {Menshikov, Mikhail and Popov, Serguei and Wade, Andrew},
    title = {Dynamics of the leftmost particle in heterogeneous semi-infinite exclusion systems},
    journal = {Markov Process.\ Relat.\ Fields},
    volume = {32},
    year = {2026},
    pages = {205--232},
}

@article {AFGL00,
    AUTHOR = {Andjel, E. D. and Ferrari, P. A. and Guiol, H. and Landim, C.},
     TITLE = {Convergence to the maximal invariant measure for a zero-range
              process with random rates},
   JOURNAL = {Stochastic Process. Appl.},
  FJOURNAL = {Stochastic Processes and their Applications},
    VOLUME = {90},
      YEAR = {2000},
    NUMBER = {1},
     PAGES = {67--81},
      ISSN = {0304-4149,1879-209X},
   MRCLASS = {60K35},
  MRNUMBER = {1787125},
MRREVIEWER = {Timo\ Sepp\"al\"ainen},
       DOI = {10.1016/S0304-4149(00)00037-5},
       URL = {https://doi.org/10.1016/S0304-4149(00)00037-5},
}

@article {sinai82,
    AUTHOR = {Sina\u\i, Ya.\ G.},
     TITLE = {The limiting behavior of a one-dimensional random walk in a random medium},
   JOURNAL = {Theory Probab. Appl.}, 
    VOLUME = {27},
      YEAR = {1982},
    NUMBER = {2},
     PAGES = {256--268},
      ISSN = {0040-361X},
   MRCLASS = {60J15 (58F11 60K35 82A42 82A70)},
  MRNUMBER = {657919},
MRREVIEWER = {F.\ L.\ Spitzer},
}

@article {BMRS15,
    AUTHOR = {Bahadoran, Christophe and Mountford, Thomas and Ravishankar, Krishnamurthi and Saada, Ellen},
     TITLE = {Supercriticality conditions for asymmetric zero-range process
              with sitewise disorder},
   JOURNAL = {Braz. J. Probab. Stat.},
  FJOURNAL = {Brazilian Journal of Probability and Statistics},
    VOLUME = {29},
      YEAR = {2015},
    NUMBER = {2},
     PAGES = {313--335},
      ISSN = {0103-0752,2317-6199},
   MRCLASS = {60K37 (82B44)},
  MRNUMBER = {3336869},
MRREVIEWER = {Rongfeng\ Sun},
       DOI = {10.1214/14-BJPS273},
       URL = {https://doi.org/10.1214/14-BJPS273},
}

@article {zhu94,
    AUTHOR = {Zhu, Yixin},
     TITLE = {Markovian queueing networks in a random environment},
   JOURNAL = {Oper. Res. Lett.},
  FJOURNAL = {Operations Research Letters},
    VOLUME = {15},
      YEAR = {1994},
    NUMBER = {1},
     PAGES = {11--17},
      ISSN = {0167-6377,1872-7468},
   MRCLASS = {90B22},
  MRNUMBER = {1284183},
       DOI = {10.1016/0167-6377(94)90009-4},
       URL = {https://doi.org/10.1016/0167-6377(94)90009-4},
}

@article {LY24,
    AUTHOR = {Lacoin, Hubert and Yang, Shangjie},
     TITLE = {Mixing time for the asymmetric simple exclusion process in a
              random environment},
   JOURNAL = {Ann. Appl. Probab.},
  FJOURNAL = {The Annals of Applied Probability},
    VOLUME = {34},
      YEAR = {2024},
    NUMBER = {1A},
     PAGES = {388--427},
      ISSN = {1050-5164,2168-8737},
   MRCLASS = {60K37 (60J27)},
  MRNUMBER = {4696281},
       DOI = {10.1214/23-aap1967},
       URL = {https://doi.org/10.1214/23-aap1967},
}

@article {schmid19,
    AUTHOR = {Schmid, Dominik},
     TITLE = {Mixing times for the simple exclusion process in ballistic
              random environment},
   JOURNAL = {Electron. J. Probab.},
  FJOURNAL = {Electronic Journal of Probability},
    VOLUME = {24},
      YEAR = {2019},
     PAGES = {Paper No. 22, 25pp},
      ISSN = {1083-6489},
   MRCLASS = {60K37 (60J27)},
  MRNUMBER = {3933201},
       DOI = {10.1214/19-EJP286},
       URL = {https://doi.org/10.1214/19-EJP286},
}

@article {KDO16,
    AUTHOR = {Krenzler, Ruslan and Daduna, Hans and Otten, Sonja},
     TITLE = {Jackson networks in nonautonomous random environments},
   JOURNAL = {Adv. in Appl. Probab.},
  FJOURNAL = {Advances in Applied Probability},
    VOLUME = {48},
      YEAR = {2016},
    NUMBER = {2},
     PAGES = {315--331},
      ISSN = {0001-8678,1475-6064},
   MRCLASS = {60K37 (60K25 90B22 90B25)},
  MRNUMBER = {3511763},
       URL = {http://projecteuclid.org/euclid.aap/1465490750},
}

@article {GL98,
    AUTHOR = {Gielis, G. and Koukkous, A. and Landim, C.},
     TITLE = {Equilibrium fluctuations for zero range processes in random
              environment},
   JOURNAL = {Stochastic Process. Appl.},
  FJOURNAL = {Stochastic Processes and their Applications},
    VOLUME = {77},
      YEAR = {1998},
    NUMBER = {2},
     PAGES = {187--205},
      ISSN = {0304-4149,1879-209X},
   MRCLASS = {60K35 (82C22)},
  MRNUMBER = {1649004},
MRREVIEWER = {Timo\ Sepp\"al\"ainen},
       DOI = {10.1016/S0304-4149(98)00044-1},
       URL = {https://doi.org/10.1016/S0304-4149(98)00044-1},
}

@article {kipnis,
    AUTHOR = {Kipnis, Claude},
     TITLE = {Central limit theorems for infinite series of queues and
              applications to simple exclusion},
   JOURNAL = {Ann. Probab.},
  FJOURNAL = {The Annals of Probability},
    VOLUME = {14},
      YEAR = {1986},
    NUMBER = {2},
     PAGES = {397--408},
      ISSN = {0091-1798,2168-894X},
   MRCLASS = {60K35},
  MRNUMBER = {832016},
MRREVIEWER = {M.\ Roussignol},
       URL =
              {http://links.jstor.org/sici?sici=0091-1798(198604)14:2<397:CLTFIS>2.0.CO;2-G&origin=MSN},
}

@article {GPSY16,
    AUTHOR = {Gannon, M. and Pechersky, E. and Suhov, Y. and Yambartsev, A.},
     TITLE = {Random walks in a queueing network environment},
   JOURNAL = {J. Appl. Probab.},
  FJOURNAL = {Journal of Applied Probability},
    VOLUME = {53},
      YEAR = {2016},
    NUMBER = {2},
     PAGES = {448--462},
      ISSN = {0021-9002,1475-6072},
   MRCLASS = {60J27 (60J28 60K20 60K37)},
  MRNUMBER = {3514290},
MRREVIEWER = {Dimitri\ Petritis},
       DOI = {10.1017/jpr.2016.12},
       URL = {https://doi.org/10.1017/jpr.2016.12},
}

@article{KF96,
    author = {Krug, J. and Ferrari, P. A.},
    title = { Phase transitions in driven diffusive systems with random
 rates},
    journal = {J. Phys. A: Math. Gen.},
VOLUME ={29},
year = {1996},
PAGES = {L465–L471},
}

@incollection {FS07,
    AUTHOR = {Ferrari, Pablo A. and Sisko, Valentin V.},
     TITLE = {Escape of mass in zero-range processes with random rates},
 BOOKTITLE = {Asymptotics: particles, processes and inverse problems},
    SERIES = {IMS Lecture Notes Monogr. Ser.},
    VOLUME = {55},
     PAGES = {108--120},
 PUBLISHER = {Inst. Math. Statist., Beachwood, OH},
      YEAR = {2007},
   MRCLASS = {60K35 (82C22)},
  MRNUMBER = {2459934},
       DOI = {10.1214/074921707000000300},
       URL = {https://doi.org/10.1214/074921707000000300},
}

@article {BFL96,
    AUTHOR = {Benjamini, I. and Ferrari, P. A. and Landim, C.},
     TITLE = {Asymmetric conservative processes with random rates},
   JOURNAL = {Stochastic Process. Appl.},
  FJOURNAL = {Stochastic Processes and their Applications},
    VOLUME = {61},
      YEAR = {1996},
    NUMBER = {2},
     PAGES = {181--204},
      ISSN = {0304-4149,1879-209X},
   MRCLASS = {60K35},
  MRNUMBER = {1386172},
MRREVIEWER = {Shui\ Feng},
       DOI = {10.1016/0304-4149(95)00077-1},
       URL = {https://doi.org/10.1016/0304-4149(95)00077-1},
}

@article {BMRS17,
    AUTHOR = {Bahadoran, C. and Mountford, T. and Ravishankar, K. and Saada, E.},
     TITLE = {Supercritical behavior of asymmetric zero-range process with sitewise disorder},
   JOURNAL = {Ann. Inst. Henri Poincar\'e{} Probab. Stat.},
  FJOURNAL = {Annales de l'Institut Henri Poincar\'e{} Probabilit\'es et
              Statistiques},
    VOLUME = {53},
      YEAR = {2017},
    NUMBER = {2},
     PAGES = {766--801},
      ISSN = {0246-0203,1778-7017},
   MRCLASS = {60K35 (60K37 82C22)},
  MRNUMBER = {3634274},
MRREVIEWER = {Francis\ Comets},
       DOI = {10.1214/15-AIHP736},
       URL = {https://doi.org/10.1214/15-AIHP736},
}

@incollection {BMRS21,
    AUTHOR = {Bahadoran, C. and Mountford, T. S. and Ravishankar, K. and  Saada, E.},
     TITLE = {Zero-range process in random environment},
 BOOKTITLE = {From particle systems to partial differential equations},
    SERIES = {Springer Proc. Math. Stat.},
    VOLUME = {352},
     PAGES = {51--77},
 PUBLISHER = {Springer, Cham},
      YEAR = {2021},
   MRCLASS = {60-02 (35Q35 60K37 82C22 82C44)},
  MRNUMBER = {4376180},
       DOI = {10.1007/978-3-030-69784-6\_4},
       URL = {https://doi.org/10.1007/978-3-030-69784-6_4},
}

@article {MMPW23,
    AUTHOR = {Malyshev, Vadim and Menshikov, Mikhail and Popov, Serguei and
              Wade, Andrew},
     TITLE = {Dynamics of finite inhomogeneous particle systems with
              exclusion interaction},
   JOURNAL = {J. Stat. Phys.},
  FJOURNAL = {Journal of Statistical Physics},
    VOLUME = {190},
      YEAR = {2023},
    NUMBER = {11},
     PAGES = {Paper No. 184, 32pp},
      ISSN = {0022-4715,1572-9613},
   MRCLASS = {60K35 (60J27 60K25 90B22)},
  MRNUMBER = {4666403},
MRREVIEWER = {Florent\ Benaych-Georges},
       DOI = {10.1007/s10955-023-03190-8},
       URL = {https://doi.org/10.1007/s10955-023-03190-8},
}

@article {KKS,
    AUTHOR = {Kesten, H. and Kozlov, M. V. and Spitzer, F.},
     TITLE = {A limit law for random walk in a random environment},
   JOURNAL = {Compositio Math.},
  FJOURNAL = {Compositio Mathematica},
    VOLUME = {30},
      YEAR = {1975},
     PAGES = {145--168},
      ISSN = {0010-437X,1570-5846},
   MRCLASS = {60J15 (60F05 60J80)},
  MRNUMBER = {380998},
MRREVIEWER = {K.\ B.\ Erickson},
}

\end{document}